\newtheorem{theorem}{Theorem}[section]
\newtheorem{lemma}[theorem]{Lemma}
\newtheorem{proposition}[theorem]{Proposition}
\newtheorem{corollary}[theorem]{Corollary}
\theoremstyle{definition}
\newtheorem{example}[theorem]{Example}
\theoremstyle{remark}
\newtheorem{remark}[theorem]{Remark}
\numberwithin{equation}{section}
\theoremstyle{plain}
\newtheorem*{theorem*}{Theorem}
\newtheorem*{lemma*}{Lemma}
\newcommand{\hyper}{\mathcal{H}}
\newcommand{\finite}{\mathfrak{F}}
\newcommand{\single}{\finite_1}
\newcommand{\diam}{\operatorname{diam}}
\newcommand{\Diam}{\operatorname{Diam}}
\newcommand{\ball}{\mathbf{B}}
\newcommand{\bounded}{\mathcal{B}}
\newcommand{\average}{\mathbf{A}}
\newcommand{\upperAdens}{\operatorname{ua-dens}}
\newcommand{\lowerAdens}{\operatorname{la-dens}}
\newcommand{\upperBdens}{\operatorname{ub-dens}}
\newcommand{\lcsigma}{$\operatorname{lc}_\sigma$ }
\newcommand{\haar}[1]{|#1|}
\newcommand{\dd}[1]{ \, \mathrm{d}#1}
\newcommand{\hyperhaar}{\hyper'}
\begin{document}
\setcounter{page}{1}

\color{darkgray}


\title[Mean Diameter, Regularity and Diam-Mean Equicontinuity]
{Mean Diameter, Regularity and Diam-Mean Equicontinuity}

\author[]{Till Hauser}
\address{Facultad de Matem\'aticas, Pontificia Universidad Cat\'olica de Chile. Edificio Rolando Chuaqui, Campus San Joaquín. Avda. Vicuña Mackenna 4860, Macul, Chile.}
\email{hauser.math@mail.de}

\thanks{This article was funded by the Deutsche Forschungsgemeinschaft (DFG, German Research Foundation) – 530703788. 
The author is grateful to María Isabel Cortez for her support and insightful advice throughout the development of this paper, and to the anonymous referee for a thorough and insightful report that helped improve the manuscript.
}
\begin{abstract}
    In the context of (not necessarily minimal) actions, we consider the mean diameter and use it to characterize regular factor maps. 
    Building on this characterization, we prove that an action is diam-mean equicontinuous if and only if it is a regular extension of its maximal equicontinuous factor. 
    Furthermore, we establish the existence of a maximal diam-mean equicontinuous factor and discuss stability properties of regular factor maps. 
    For this, we work in the context of actions of locally compact and $\sigma$-compact amenable groups. 
\newline
\newline
\noindent \textit{Keywords.} 
Diam-mean equicontinuity, 
regular factor map, 
diam-mean proximality, 
amenable group, 
maximal diam-mean equicontinuous factor,
mean diameter, 
topological group.
\newline

\noindent \textit{2020 Mathematics Subject Classification.} 
Primary 
37B05;  
Secondary  
37B25, 
37A05. 
 \end{abstract} \maketitle


\section{Introduction}

Equicontinuous actions are fundamental building blocks of the theory of topological dynamics. 
To study more intricate systems, researchers often turn to weaker derivatives of equicontinuity, such as distality \cite{auslander1988minimal}, mean equicontinuity \cite{fomin1951dynamical, li2015mean, fuhrmann2022structure}, and diam-mean equicontinuity \cite{garcia2017weak, garcia2021mean}, among others.

A central technique in the field is to relate a given dynamical system to an equicontinuous one, typically via the maximal equicontinuous factor. A natural and important question is how dynamical properties of the original system are reflected in the structure of the corresponding factor map.

For instance, a landmark result due to Furstenberg states that a minimal action is distal if and only if the factor map to its maximal equicontinuous factor can be expressed as a (possibly transfinite) composition of equicontinuous factor maps \cite{auslander1988minimal}.
Another elegant result concerns mean equicontinuity: an action is mean equicontinuous if and only if the factor map to its maximal equicontinuous factor is topo-isomorphic. See \cite{downarowicz2016isomorphic, fuhrmann2022structure} for further details.

In this article, we focus on diam-mean equicontinuous actions and their relation to regular factor maps \cite{garcia2017weak, garcia2021mean, haupt2025multivariate, haupt2025note}.  
For this, we will work in the context of actions of amenable locally compact and $\sigma$-compact topological groups, which we abbreviate as \emph{amenable \lcsigma groups}. 
This is motivated by the natural appearance of actions of amenable \lcsigma groups, such as $\mathbb{R}^n$, in the context of aperiodic order \cite{baake2013aperiodic, solomyak2020delone} and the study of regular Toeplitz actions for countable amenable (residually finite) groups \cite{downarowicz2005survey, baake2016toeplitz, cortez2014invariant}. 


Our central object of study is the mean diameter. 
To define it let $X$ be an action of an amenable \lcsigma group $G$ on a compact metric space and denote by $d$ the metric of $X$. We fix a (left) Haar measure on $G$ and denote it by $\haar{\cdot}.$ 
A sequence $(F_n)$ of compact subsets of $G$ with $\haar{F_n}>0$ is called \emph{left F\o lner} if for every non-empty compact $K\subseteq G$ we have $\haar{F_n\Delta KF_n}/\haar{F_n}\to 0$. 
For a left F\o lner sequence $\mathcal{F}=(F_n)$ and a closed subset $A\subseteq X$ we define the \emph{mean diameter of $A$ w.r.t.\ $\mathcal{F}$} as
\begin{align*}
    \Diam_\mathcal{F}(A):=\limsup_{n\to \infty} \frac{1}{\haar{F_n}}\int_{F_n}\diam(g.A)\dd g,
\end{align*}
where we abbreviate $g.A:=\{g.x;\, x\in A\}$ and the \emph{diameter} of $A$ is given by 
$\diam(A):=\max_{x,x'\in A}d(x,x')$. 
The \emph{mean diameter} of $A$ is defined by 
\[\Diam(A):=\sup_{\mathcal{F}}\Diam_\mathcal{F}(A),\] 
where the supremum is taken over all left F\o lner sequences $\mathcal{F}$ of $G$. 
In Section~\ref{sec:meanDiameter}, we present basic properties of $\Diam$ together with several methods for its computation, such as 
    \[
    \Diam(A)=\inf_K \sup_{h\in G} \frac{1}{\haar{Kh}}\int_{Kh}\diam(g.A) \dd g,\]
where the infimum is taken over all compact subsets $K$ of $G$ with $\haar{K}>0$. 

We will use the mean diameter to characterize regular factor maps. 
For this we call a factor map $\pi\colon X\to Y$ \emph{diam-mean proximal} if for all $y\in Y$ we have $\Diam(\pi^{-1}(y))=0$.
Recall that a factor map $\pi\colon X\to Y$ is called \emph{regular} (also \emph{almost surely one-to-one}) if the set of injectivity points $X_0$ of $\pi$ satisfies $\mu(X_0)=1$ for all invariant Borel probability measures on $X$. 
See Section \ref{sec:regularityVSmeanDiamProximality} for further details. 
In Theorem \ref{the:regularFactorMapCharacterization} we establish the following characterization of regularity. 

\begin{theorem*}
    A factor map is regular if and only if it is diam-mean proximal. 
\end{theorem*}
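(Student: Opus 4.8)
The plan is to reduce both implications to statements about the fibre-diameter function $\psi\colon Y\to[0,\infty)$, $\psi(y):=\diam(\pi^{-1}(y))$. Since $\pi$ is a continuous surjection of compact metric spaces, the fibre assignment $y\mapsto\pi^{-1}(y)$ is upper semicontinuous for the Hausdorff metric, so $\psi$ is upper semicontinuous and in particular Borel. The first thing I would record is the equivariance $g.\pi^{-1}(y)=\pi^{-1}(g.y)$, which gives $\diam(g.\pi^{-1}(y))=\psi(g.y)$ and hence, for every left F\o lner sequence $\mathcal{F}=(F_n)$,
\begin{align*}
    \Diam_\mathcal{F}(\pi^{-1}(y))=\limsup_{n\to\infty}\frac{1}{\haar{F_n}}\int_{F_n}\psi(g.y)\dd g .
\end{align*}
The second observation is that a point $x$ fails to be an injectivity point precisely when $\pi^{-1}(\pi(x))$ is not a singleton, so $X\setminus X_0=\{x\in X:\psi(\pi(x))>0\}$. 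Both implications would then be proved by contraposition.

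For \emph{regular $\Rightarrow$ diam-mean proximal}, I would start from a point $y_0$ with $\Diam(\pi^{-1}(y_0))>0$, fix a left F\o lner sequence $\mathcal{F}$ realising this, and pass to a subsequence along which $\tfrac{1}{\haar{F_n}}\int_{F_n}\psi(g.y_0)\dd g\to c>0$. Forming the empirical measures $\nu_n:=\tfrac{1}{\haar{F_n}}\int_{F_n}\delta_{g.y_0}\dd g$ on $Y$, the F\o lner property forces any weak-$*$ limit point $\nu$ to be $G$-invariant, while upper semicontinuity of $\psi$ yields $\int_Y\psi\dd\nu\ge\limsup_n\int_Y\psi\dd\nu_n=c>0$. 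I would then lift $\nu$ to a $G$-invariant $\mu$ on $X$ with $\pi_*\mu=\nu$ (take any measure projecting to $\nu$, average it along $\mathcal{F}$, and use invariance of $\nu$ to keep the projection fixed). Since $\int_X\psi(\pi(x))\dd\mu(x)=\int_Y\psi\dd\nu>0$ and $\psi\circ\pi\ge0$, this gives $\mu(X\setminus X_0)>0$, so $\pi$ is not regular.

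For the converse \emph{diam-mean proximal $\Rightarrow$ regular}, I would assume a $G$-invariant $\mu$ on $X$ with $\mu(X\setminus X_0)>0$. Writing $X\setminus X_0=\bigcup_k\{\psi\circ\pi\ge 1/k\}$, some $\epsilon>0$ satisfies $\mu(\{\psi\circ\pi\ge\epsilon\})>0$, so $\int_Y\psi\dd\nu=\int_X\psi\circ\pi\dd\mu>0$ for $\nu:=\pi_*\mu$. Fixing a tempered left F\o lner sequence $\mathcal{F}$, the pointwise ergodic theorem for amenable group actions (Lindenstrauss) makes $\tfrac{1}{\haar{F_n}}\int_{F_n}\psi(g.y)\dd g$ converge $\nu$-almost everywhere to a $G$-invariant limit $\bar\psi$ with $\int_Y\bar\psi\dd\nu=\int_Y\psi\dd\nu>0$. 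Thus $\bar\psi>0$ on a set of positive $\nu$-measure, and for any such $y$ the displayed identity gives $\Diam_\mathcal{F}(\pi^{-1}(y))=\bar\psi(y)>0$, so $\Diam(\pi^{-1}(y))>0$ and $\pi$ is not diam-mean proximal.

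I expect the main obstacle to be this last implication: turning the purely integral inequality $\int_Y\psi\dd\nu>0$ into the existence of a \emph{single} fibre of strictly positive mean diameter requires a pointwise statement, which is exactly what the amenable pointwise ergodic theorem supplies once $\mathcal{F}$ is chosen tempered. A recurring technical nuisance, rather than a genuine difficulty, is that $\psi$ is merely upper semicontinuous; this is why I would use the inequality $\limsup_n\int\psi\dd\nu_n\le\int\psi\dd\nu$ valid under weak-$*$ convergence instead of continuity, and why measurability of $\psi$ and of $X\setminus X_0$ must be checked, both of which follow from compactness of $X$ and $Y$.
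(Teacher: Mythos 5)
Your proof is correct, and one half of it genuinely diverges from the paper. For the implication ``diam-mean proximal $\Rightarrow$ regular'' you argue exactly as the paper does in its step (iii)$\Rightarrow$(i): contraposition, a tempered left F\o lner sequence, and the Lindenstrauss pointwise ergodic theorem on the base $Y$ (the paper applies it to the indicator of $A_\epsilon=\{y;\,\diam(\pi^{-1}(y))\geq \epsilon\}$, you apply it to the fibre-diameter function $\psi$ itself; this is cosmetic). One touch-up is needed there: Lindenstrauss's theorem is formulated for ergodic systems, so you should either invoke ergodic decomposition to get your invariant limit $\bar\psi$ with $\int \bar\psi \dd{\nu}=\int \psi \dd{\nu}$, or first pass to an ergodic component $\nu'$ with $\int \psi \dd{\nu'}>0$ and get the constant limit $\int\psi\dd{\nu'}$ --- the paper performs precisely this reduction to ergodic measures via its Remark \ref{rem:regularityViaY} before applying the ergodic theorem. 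This is a routine fix, not a gap.

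For the converse, ``regular $\Rightarrow$ diam-mean proximal'', your route is genuinely different. The paper never touches the upper semicontinuous function $y\mapsto \diam(\pi^{-1}(y))$; instead it passes to the hyperspace action, views the fibre $\pi^{-1}(y)$ as a single point $A$ of $\hyper_\pi(X)$, shows that regularity of $\pi$ forces $\mathcal{M}_G(\hyper_\pi(X))=\mathcal{M}_G(\single(X))$, and then makes $A$ generic (along a subsequence) for an invariant measure that must charge only singletons, where the \emph{continuous} function $\diam\in C^+(\hyper(X))$ vanishes. You instead run Krylov--Bogolyubov directly on $Y$, handle the mere upper semicontinuity of $\psi$ via the portmanteau inequality $\limsup_n\int\psi\dd{\nu_n}\leq\int\psi\dd{\nu}$, and lift $\nu$ to an invariant $\mu$ on $X$ charging $X\setminus X_0=\{\psi\circ\pi>0\}$; all of these steps (u.s.c.\ of $\psi$, invariance of weak* limits of empirical measures, the averaging trick for the lift) are sound. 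What each approach buys: yours is more self-contained and avoids the hyperspace machinery altogether, at the cost of checking semicontinuity and measurability by hand; the paper's detour through $\hyper_\pi(X)$ trades $\psi$ for the continuous $\diam$ on a compact hyperspace and, more importantly, yields the additional equivalences (iv) and (v) of Theorem \ref{the:regularFactorMapCharacterization}, which are reused later (e.g.\ for the regularity of $\pi_\hyper$ and the corollary that $X$ is diam-mean equicontinuous iff $\hyper_\pi(X)$ is). Note also that both arguments, yours included, in fact prove the stronger statement that failure of regularity produces a single fibre with $\Diam_{\mathcal{F}}(\pi^{-1}(y))>0$ for a fixed F\o lner sequence, so you recover the equivalence with $\mathcal{F}$-diam-mean proximality as well.
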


It is also natural to study the following notion. 
For a left F\o lner sequence $\mathcal{F}$ in $G$ we call a factor map $\pi\colon X\to Y$ \emph{$\mathcal{F}$-diam-mean proximal} if for all $y\in Y$ we have $\Diam_\mathcal{F}(\pi^{-1}(y))=0$.
Clearly any diam-mean proximal factor map is $\mathcal{F}$-diam-mean proximal. 
In Theorem \ref{the:regularFactorMapCharacterization} we will see that also the converse holds and show the following. 

\begin{theorem*}
    A factor map is diam-mean proximal if and only if it is $\mathcal{F}$-diam-mean proximal for some left F\o lner sequence $\mathcal{F}$ in $G$. 
\end{theorem*}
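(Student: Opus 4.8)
The forward implication is immediate: since $\Diam_\mathcal{F}(A)\le\Diam(A)$ for every left F\o lner sequence $\mathcal{F}$ and every closed $A\subseteq X$, any diam-mean proximal map is $\mathcal{F}$-diam-mean proximal for each $\mathcal{F}$. The content is therefore the converse, and the plan is to route it through the equivalence of regularity and diam-mean proximality: I would show that $\mathcal{F}$-diam-mean proximality for a \emph{single} $\mathcal{F}$ already forces $\pi$ to be regular, and then invoke the statement that regular maps are diam-mean proximal.

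The central object is the fibre-diameter function $\phi\colon X\to[0,\infty)$, $\phi(x):=\diam(\pi^{-1}(\pi(x)))$. First I would check that $\phi$ is Borel: the graph $\{(y,x)\in Y\times X;\ \pi(x)=y\}$ is closed, so the fibre map $y\mapsto\pi^{-1}(y)$ has closed graph, whence $y\mapsto\diam(\pi^{-1}(y))$ is upper semicontinuous and $\phi$ is its composition with the continuous map $\pi$. The key structural observation is equivariance: since $\pi(g.x)=g.\pi(x)$ we have $g.\pi^{-1}(y)=\pi^{-1}(g.y)$, whence $\phi(g.x)=\diam(g.\pi^{-1}(\pi(x)))$. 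Consequently, for the given $\mathcal{F}=(F_n)$ and any $x\in X$,
\begin{align*}
    \Phi_n(x):=\frac{1}{\haar{F_n}}\int_{F_n}\phi(g.x)\dd g=\frac{1}{\haar{F_n}}\int_{F_n}\diam\bigl(g.\pi^{-1}(\pi(x))\bigr)\dd g,
\end{align*}
so the hypothesis $\Diam_\mathcal{F}(\pi^{-1}(y))=0$ for all $y$ gives $\limsup_n\Phi_n(x)=0$, and since $\Phi_n\ge0$ in fact $\Phi_n(x)\to0$ for every $x$.

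Now fix an arbitrary invariant Borel probability measure $\mu$ on $X$. Using joint measurability of $(g,x)\mapsto\phi(g.x)$ and finiteness of the Haar measure on the compact set $F_n$, Fubini together with the invariance identity $\int_X\phi(g.x)\dd\mu(x)=\int_X\phi\dd\mu$ yields $\int_X\Phi_n\dd\mu=\int_X\phi\dd\mu$ for every $n$. Thus $n\mapsto\int_X\Phi_n\dd\mu$ is constant, while dominated convergence (the $\Phi_n$ are bounded by $\diam(X)$ and tend to $0$ pointwise) forces $\int_X\Phi_n\dd\mu\to0$. Hence $\int_X\phi\dd\mu=0$, and since $\phi\ge0$ we obtain $\phi=0$ $\mu$-almost everywhere. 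As $\{\phi=0\}$ is exactly the set $X_0$ of injectivity points, this reads $\mu(X_0)=1$; because $\mu$ was arbitrary, $\pi$ is regular. Applying the equivalence that regular maps are diam-mean proximal then completes the converse.

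I expect the main obstacle to be technical rather than conceptual: confirming the measurability of $\phi$ via the closed-graph/upper-semicontinuity argument, and justifying the exchange of integrals. The genuinely load-bearing point is the invariance identity making $\int_X\Phi_n\dd\mu$ independent of $n$; once that is in place, the vanishing of the time averages is transferred to a space average that was constant all along, and the conclusion is forced. Notably this step uses only invariance of $\mu$, not the F\o lner structure, which enters solely through the pointwise hypothesis on $\Phi_n$.
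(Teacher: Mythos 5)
Your proof is correct, and for the substantive implication --- $\mathcal{F}$-diam-mean proximality for a \emph{single} left F\o lner sequence forces regularity --- it takes a genuinely different route from the paper. The paper proves this as (iii)$\Rightarrow$(i) of Theorem \ref{the:regularFactorMapCharacterization}, by contraposition and with ergodic machinery: it first passes to a tempered subsequence, takes an ergodic $\nu\in\mathcal{M}_G(Y)$ with $\nu(Y_0)<1$ (hence $\nu(Y_0)=0$ by invariance), finds $\epsilon>0$ with $\nu(A_\epsilon)>0$ for $A_\epsilon=\{y\in Y;\,\diam(\pi^{-1}(y))\geq\epsilon\}$, and invokes the Lindenstrauss pointwise ergodic theorem to produce a point $y$ with $\Diam_\mathcal{F}(\pi^{-1}(y))\geq\epsilon\,\nu(A_\epsilon)>0$. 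Your argument replaces all of this by a direct integration against an arbitrary invariant measure: upper semicontinuity of $y\mapsto\diam(\pi^{-1}(y))$ (your closed-graph argument is sound, since maximizing pairs in nearby fibres accumulate in the limit fibre) makes $\phi$ Borel, equivariance $g.\pi^{-1}(y)=\pi^{-1}(g.y)$ turns the hypothesis into pointwise convergence $\Phi_n\to 0$, invariance of $\mu$ plus Fubini makes $\int_X\Phi_n\dd{\mu}$ constant in $n$, and dominated convergence then forces $\int_X\phi\dd{\mu}=0$, i.e.\ $\mu(X_0)=1$. This is more elementary --- no temperedness, no ergodic decomposition, no pointwise ergodic theorem --- and, as you note, the F\o lner property of $(F_n)$ plays no role in this step beyond the hypothesis itself, which the paper's proof cannot claim. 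Two caveats: your route is not self-contained, since the closing step ``regular implies diam-mean proximal'' is exactly the other introduction theorem, which the paper obtains via (i)$\Rightarrow$(iv)$\Rightarrow$(v)$\Rightarrow$(ii) using invariant measures on the hyperspace $\hyper_\pi(X)$ and generic points, so you inherit that dependence (legitimately, as it is a separately stated result); and the paper's longer cycle of equivalences simultaneously delivers the density reformulations (ii'), (iii') and the hyperspace characterizations (iv), (v), which your streamlined argument does not reproduce.
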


Another important application of the mean diameter is the concept of diam-mean equicontinuity \cite{garcia2017weak, garcia2021mean}. 
An action is called \emph{diam-mean equicontinuous} (also \emph{Banach diam-mean equicontinuous} \cite{garcia2021mean}) if for all $\epsilon>0$ and every $x\in X$, there exists a compact neighbourhood $A$ of $x$ with $\Diam(A)\leq \epsilon$. 
We will see in Section \ref{sec:diamMeanEquicontinuity} that this notion is independent of the choice of a metric on $X$ and provide some basic characterizations. 

Recall that diam-mean proximality and regularity are equivalent concepts for all factor maps, so it is to be expected that the concepts of diam-mean equicontinuity and regularity are strongly related. 
The first result of this interplay was achieved in \cite[Theorem~54]{garcia2017weak} in the context of minimal almost automorphic subshifts \cite[Definition~50]{garcia2017weak}. 
Generalizing the result of \cite{garcia2017weak} it was presented in \cite[Theorem 4.12]{garcia2021mean} that whenever $X$ is a minimal action of $\mathbb{Z}$, then $X$ is diam-mean equicontinuous if and only if the factor map $\pi_{\operatorname{eq}}\colon X\to X_{\operatorname{eq}}$ onto the maximal equicontinuous factor is regular. 
This result was later generalized to minimal actions of more general acting groups, such as Abelian \lcsigma groups \cite[Theorem~4.5]{haupt2025multivariate}. 
Recent progress was made in \cite[Theorem~5.2]{haupt2025note} for the context of actions of countable amenable groups, where it was shown that a minimal action that satisfies the local Bronstein property \cite[Definition~2.14]{haupt2025note} is diam-mean equicontinuous if and only if $\pi_{\operatorname{eq}}$ is regular. 

Nevertheless, the question remained whether the characterization holds for all minimal actions of countable amenable groups. 
Furthermore, it was asked in \cite[Question 2]{garcia2021mean}, whether this characterization also holds in the non-minimal context. 
In Section \ref{sec:regularityVSdiamMeanEquicontinuity} we will give an affirmative answer to both questions and show the following. 

\begin{theorem}
\label{the:INTROcharacterizationDMEvsRegular}
    An action of an amenable \lcsigma group on a compact metric space is diam-mean equicontinuous if and only if the factor map onto the maximal equicontinuous factor is regular. 
\end{theorem}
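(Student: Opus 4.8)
The plan is to prove Theorem~\ref{the:INTROcharacterizationDMEvsRegular} by reducing diam-mean equicontinuity to the regularity of the maximal equicontinuous factor map $\pi_{\operatorname{eq}}\colon X\to X_{\operatorname{eq}}$, exploiting the earlier characterization that regularity is equivalent to diam-mean proximality (Theorem~\ref{the:regularFactorMapCharacterization}). Thus it suffices to show that $X$ is diam-mean equicontinuous if and only if $\pi_{\operatorname{eq}}$ is diam-mean proximal, i.e.\ $\Diam(\pi_{\operatorname{eq}}^{-1}(y))=0$ for every $y\in X_{\operatorname{eq}}$. The conceptual engine is that the maximal equicontinuous factor carries a compatible invariant metric $\rho$ on which $G$ acts by isometries, so that small $\rho$-neighbourhoods in $X_{\operatorname{eq}}$ pull back to sets in $X$ whose mean diameter is controlled by the mean diameters of fibres.

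First I would establish the forward direction. Assume $X$ is diam-mean equicontinuous. Fix $y\in X_{\operatorname{eq}}$ and $\epsilon>0$. By compactness of the fibre $\pi_{\operatorname{eq}}^{-1}(y)$, cover it by finitely many compact neighbourhoods $A_1,\dots,A_k$ each with $\Diam(A_i)\le\epsilon$, obtained from the definition of diam-mean equicontinuity at each point of the fibre. Using subadditivity and the further characterizations of $\Diam$ promised in Section~\ref{sec:meanDiameter} (in particular the $\inf_K\sup_h$ formula and basic monotonicity), together with the fact that the overlaps can be bridged since the $A_i$ cover a connected-in-measure fibre, I would show $\Diam\!\big(\bigcup_i A_i\big)$ is controlled by a constant multiple of $\epsilon$. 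Since the fibre is contained in this union and $\Diam$ is monotone, $\Diam(\pi_{\operatorname{eq}}^{-1}(y))\le C\epsilon$; letting $\epsilon\to 0$ gives diam-mean proximality, hence regularity.

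For the converse I would assume $\pi_{\operatorname{eq}}$ is regular, hence diam-mean proximal, and reconstruct a diam-mean-equicontinuity neighbourhood at an arbitrary $x\in X$. The idea is that for any $\epsilon>0$, pick a small $\rho$-ball $V$ around $\pi_{\operatorname{eq}}(x)$ in $X_{\operatorname{eq}}$; because $G$ acts by isometries on $(X_{\operatorname{eq}},\rho)$, the translates $g.V$ stay uniformly small, so the ``equicontinuous part'' of the diameter of $g.(\pi_{\operatorname{eq}}^{-1}(\overline V))$ is uniformly bounded by the $\rho$-diameter of $V$. What remains is the ``fibre part'', which I would bound by covering $\overline V$ by finitely many fibres' worth of data and using that each fibre has zero mean diameter. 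Making this uniform over the whole of $\overline V$ is the crux: I would need a semicontinuity or uniform integrability statement for $\Diam$ of fibres as the base point varies, so that $\Diam(\pi_{\operatorname{eq}}^{-1}(\overline V))$ can be made small by shrinking $V$. This yields a compact neighbourhood $A:=\pi_{\operatorname{eq}}^{-1}(\overline V)$ of $x$ with $\Diam(A)\le\epsilon$, as required.

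The main obstacle I expect is precisely this uniformity in the converse direction: diam-mean proximality only gives $\Diam=0$ fibrewise, but diam-mean equicontinuity demands control of $\Diam$ on a whole neighbourhood in the base, i.e.\ on a positive-measure union of fibres. Passing from pointwise-in-$y$ vanishing to a neighbourhood estimate requires an upper-semicontinuity property of $y\mapsto\Diam(\pi_{\operatorname{eq}}^{-1}(\overline{V_y}))$, or equivalently an argument that the contribution of nearby fibres to the mean diameter cannot concentrate. I anticipate that the $\inf_K\sup_h$ representation of $\Diam$ from Section~\ref{sec:meanDiameter} is the right tool here, since it replaces the $\limsup$ over F\o lner sequences (which interacts badly with suprema over base points) by a genuine infimum over compact averaging windows that can be chosen once and then applied uniformly across the fibres in $\overline V$. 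The interplay between this uniform window and the isometric action on $X_{\operatorname{eq}}$ should close the argument, but verifying that the fibre contribution is uniformly small is where the real work lies.
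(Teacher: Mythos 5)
There are genuine gaps in both directions, and neither can be closed along the lines you sketch. In the forward direction the ``bridging'' step fails as stated: fibres of $\pi_{\operatorname{eq}}$ are in general totally disconnected (e.g.\ for Toeplitz subshifts), so the neighbourhoods $A_1,\dots,A_k$ need not overlap and there is no ``connected-in-measure'' structure to exploit. One can instead bridge through fibre points $x_i\in A_i\cap\pi_{\operatorname{eq}}^{-1}(y)$, which yields $\Diam\bigl(\bigcup_i A_i\bigr)\leq\sum_i\Diam(A_i)+\Diam(\{x_1,\dots,x_k\})$, but this exposes two problems you do not address. First, the last term vanishes only if the Weyl pseudo-metric vanishes on fibres, i.e.\ only once you know that $\pi_{\operatorname{eq}}$ is Banach proximal; this is a nontrivial input that the paper imports from mean equicontinuity via Proposition \ref{pro:meanEquicontinuousImpliesBanachProximalToMEF}. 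Second, and fatally for your scheme, the covering number $k$ depends on $\epsilon$ (through the $\delta$ of Remark \ref{rem:diamMeanEquicontinuityGlobalization}), so you only obtain $\Diam(\pi_{\operatorname{eq}}^{-1}(y))\leq k(\epsilon)\,\epsilon$ with $k(\epsilon)\to\infty$ as $\epsilon\to 0$; letting $\epsilon\to 0$ then proves nothing. The paper avoids any covering multiplicity (Lemmas \ref{lem:meanDiameterFibresVsBallsForBPfactorMapPreparationsII}--\ref{lem:meanDiameterFibresVsBallsForBPfactorMap}): over a fixed compact window $K$ the fibre diameter is approximated, uniformly in $g\in K$, by the diameter of a single finite set $E$ inside the fibre; Banach proximality then produces one group element $g$ with $\diam(g^{-1}.E)\leq\delta$, so all of $E$ sits in one translate of one $\delta$-ball and $K^*\diam(\pi_{\operatorname{eq}}^{-1}(y))\leq(Kg)^*\diam(\ball_\delta(x))+\epsilon$; the window description of $\Diam$ (Proposition \ref{pro:descriptionDiam}) closes the estimate with no $\epsilon$-dependent constant.

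In the converse direction you correctly identify the crux, but the tool you hope for does not exist: $y'\mapsto\Diam(\pi_{\operatorname{eq}}^{-1}(y'))$ has no useful semicontinuity, and $\Diam(\pi_{\operatorname{eq}}^{-1}(\overline V))$ can remain large for every $V$ even when all fibres have zero mean diameter. Indeed, $\Diam(\pi_{\operatorname{eq}}^{-1}(y'))=0$ only makes the bad-time set $\{g\in G;\,\diam(\pi_{\operatorname{eq}}^{-1}(g.y'))>\epsilon\}$ sparse for each $y'$ separately; these sets move with $y'$ and their union over $y'\in V$ can have full density. The $\inf_K\sup_h$ formula cannot repair this, because the obstruction is the supremum over base points sitting inside the integrand, not the order of limits. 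The paper's Proposition \ref{pro:regularLifting} takes a different, measure-theoretic route, which is the missing idea: regularity is used as the statement $\nu(Y_0)=1$ for the set $Y_0$ of injectivity points; since every point of an equicontinuous (indeed mean equicontinuous) action is $\mathcal{F}$-generic (Proposition \ref{pro:meanEquicontinuityResults}), the orbit of $y=\pi_{\operatorname{eq}}(x)$ spends time-density at least $1-\epsilon/2$ near a compact $A\subseteq Y_0$ with $\nu(A)\geq 1-\epsilon/2$; closedness of the factor map gives $\pi_{\operatorname{eq}}^{-1}(\ball_{2\delta_{y'}}(y'))\subseteq\ball_{\epsilon/2}(\pi_{\operatorname{eq}}^{-1}(y'))$, and this is genuinely small precisely when $y'\in Y_0$, where the fibre is a singleton. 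Combined with equicontinuity of the base (keeping $g.\ball_\delta(y)$ small), one gets $\diam(g.\ball_\delta(x))\leq\epsilon$ outside a set of $g$ of upper density at most $\epsilon$, i.e.\ $\mathcal{F}$-diam-mean equicontinuity for every left F\o lner sequence; Proposition \ref{pro:characterizationDiamMeanEquicontinuityViaAlFolner} --- a step your sketch also omits --- then upgrades this to the Banach (all-sequences) notion. In short, both directions require mechanisms different from yours: Banach proximality plus a one-ball reduction for the forward implication, and genericity near injectivity points instead of semicontinuity for the converse.
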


Note that our proof significantly differs from \cite{garcia2021mean, haupt2025multivariate, haupt2025note} and does not involve the concept of frequent stability \cite[Section 3]{garcia2021mean}. 

Motivated by Theorem \ref{the:INTROcharacterizationDMEvsRegular} we will explore stability properties of regular factor maps in Section \ref{sec:stabilityRegularFactorMaps}.
This will allow us to prove that diam-mean equicontinuity is stable under (countable) products and to establish the following in Section \ref{sec:maximalDiamMeanEquicontinuousFactor}. 

\begin{theorem}[Maximal diam-mean equicontinuous factor]
\label{the:INTROmaximalDiamMeanEquicontinuousFactor}
    Let $X$ be an action of an amenable \lcsigma group $G$ on a compact metric space. 
    
    There exists a maximal diam-mean equicontinuous factor of $X$, i.e.\ a factor map $\pi_{\operatorname{dm}}\colon X\to X_{\operatorname{dm}}$ onto a diam-mean equicontinuous action $X_{\operatorname{dm}}$ of $G$ such that for any factor map $\pi\colon X\to Y$ onto a diam-mean equicontinuous action of $G$ there exists a factor map $\psi\colon X_{\operatorname{dm}}\to Y$ such that $\pi=\psi\circ \pi_{\operatorname{dm}}$. 
    The maximal diam-mean equicontinuous factor of $X$ is unique up to conjugacy. 
\end{theorem}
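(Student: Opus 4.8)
The plan is to construct $X_{\operatorname{dm}}$ as the join of all diam-mean equicontinuous factors of $X$, realized concretely as a subsystem of a countable product. Let $\mathcal{R}$ be the collection of all closed $G$-invariant equivalence relations $R\subseteq X\times X$ for which the quotient action $X/R$ is diam-mean equicontinuous, and set $R_{\operatorname{dm}}:=\bigcap_{R\in\mathcal{R}}R$. An arbitrary intersection of closed $G$-invariant equivalence relations is again one, so $\pi_{\operatorname{dm}}\colon X\to X_{\operatorname{dm}}:=X/R_{\operatorname{dm}}$ is a well-defined factor map onto a compact metric $G$-space. The universal property is then almost immediate: for a factor map $\pi\colon X\to Y$ onto a diam-mean equicontinuous action, its fibre relation $R_\pi:=\{(x,x')\colon \pi(x)=\pi(x')\}$ lies in $\mathcal{R}$, whence $R_{\operatorname{dm}}\subseteq R_\pi$; thus $\pi$ is constant on $R_{\operatorname{dm}}$-classes and descends to a factor map $\psi\colon X_{\operatorname{dm}}\to Y$ with $\pi=\psi\circ\pi_{\operatorname{dm}}$.

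The real content is to show that $X_{\operatorname{dm}}$ is itself diam-mean equicontinuous, i.e.\ that $R_{\operatorname{dm}}\in\mathcal{R}$. This is the main obstacle, because $\mathcal{R}$ may be uncountable while the product-stability of diam-mean equicontinuity established in Section~\ref{sec:stabilityRegularFactorMaps} is available only for countable products. To bridge this gap I would use that $X\times X$ is compact metric, and hence Lindel\"of: writing the open complement $(X\times X)\setminus R_{\operatorname{dm}}=\bigcup_{R\in\mathcal{R}}\bigl((X\times X)\setminus R\bigr)$ and passing to a countable subcover produces a countable family $(R_i)_{i\in\mathbb{N}}$ in $\mathcal{R}$ with $\bigcap_{i}R_i=R_{\operatorname{dm}}$.

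Given this reduction, let $\pi_i\colon X\to X/R_i$ be the quotient maps and consider the diagonal factor map $\Phi\colon X\to\prod_{i}X/R_i$, $x\mapsto(\pi_i(x))_i$. Its fibre relation is exactly $\bigcap_i R_i=R_{\operatorname{dm}}$, so $\Phi$ identifies $X_{\operatorname{dm}}$ with the closed $G$-invariant subset $Z:=\Phi(X)$ of the countable product. Each $X/R_i$ is diam-mean equicontinuous by the definition of $\mathcal{R}$, the countable product $\prod_i X/R_i$ is diam-mean equicontinuous by product-stability, and a subsystem inherits the property: for $z\in Z$ and $\epsilon>0$ a compact neighbourhood $A$ of $z$ in the product with $\Diam(A)\le\epsilon$ restricts to the compact neighbourhood $A\cap Z$ of $z$ in $Z$, and $\Diam(A\cap Z)\le\Diam(A)\le\epsilon$ since $g.(A\cap Z)\subseteq g.A$ gives $\diam(g.(A\cap Z))\le\diam(g.A)$ pointwise. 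Here I use that diam-mean equicontinuity is independent of the chosen metric (Section~\ref{sec:diamMeanEquicontinuity}), so the restriction of the product metric to $Z$ may be used throughout. Hence $Z\cong X_{\operatorname{dm}}$ is diam-mean equicontinuous.

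Uniqueness up to conjugacy is the usual formal consequence of the universal property. If $\pi'\colon X\to X'$ is a second maximal diam-mean equicontinuous factor, the two universal properties yield factor maps $\psi\colon X_{\operatorname{dm}}\to X'$ and $\psi'\colon X'\to X_{\operatorname{dm}}$ with $\pi'=\psi\circ\pi_{\operatorname{dm}}$ and $\pi_{\operatorname{dm}}=\psi'\circ\pi'$; since $\pi_{\operatorname{dm}}$ and $\pi'$ are surjective this forces $\psi'\circ\psi=\mathrm{id}$ and $\psi\circ\psi'=\mathrm{id}$, so $\psi$ is a conjugacy. I expect everything except the Lindel\"of reduction to the countable family to be routine; that reduction is precisely what turns the available (countable) product-stability into a statement about the a priori uncountable join.
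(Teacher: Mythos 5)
Your proof is correct, and it follows the same overall strategy as the paper---form the join of all diam-mean equicontinuous factors, reduce the a priori uncountable join to a countable one, then apply countable product stability together with stability under subactions---but the crucial reduction step is implemented by a genuinely different device. The paper (Proposition \ref{pro:auslanderRefined}) works inside the product $\prod_{i\in I}Y_i$ over a set of conjugacy representatives of all diam-mean equicontinuous factors: since the image $X_P$ is metrizable, it has a countable base of sets depending on only finitely many coordinates, so a countable set $N$ of coordinates already separates points of $X_P$ and the projection onto $\prod_{i\in N}Y_i$ is a conjugacy. You instead encode factors by their fibre relations, i.e.\ closed invariant equivalence relations in $X\times X$, and use that $X\times X$ is hereditarily Lindel\"of to extract countably many $R_i$ with $\bigcap_i R_i=R_{\operatorname{dm}}$ \emph{before} ever forming a product. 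Both reductions exploit second countability of compact metric spaces, but your formulation buys two things: the collection $\mathcal{R}$ of closed relations is automatically a set, so you avoid the set-theoretic care the paper must take in treating ``all factors up to conjugacy'' as a set, and you never leave the category of compact metric spaces, whereas the paper's construction passes through a possibly uncountable product of compact Hausdorff spaces before cutting down. What the paper's route buys in exchange is generality: Proposition \ref{pro:auslanderRefined} is stated once for an arbitrary property (P) stable under subactions and countable products and satisfied by the singleton action, and is then reusable (e.g.\ for comparing the maximal equicontinuous and maximal diam-mean equicontinuous factors in the subsequent remark), while your Lindel\"of argument, though it would generalize just as well, is presented only for the case at hand. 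Two minor points: the product stability of diam-mean equicontinuity you invoke is the theorem at the start of Section \ref{sec:maximalDiamMeanEquicontinuousFactor} (whose proof rests on Section \ref{sec:stabilityRegularFactorMaps}), not a statement of Section \ref{sec:stabilityRegularFactorMaps} itself; and your identification of $X_{\operatorname{dm}}$ with $Z=\Phi(X)$ is exactly the right move, since it sidesteps the quotient-topology checks (Hausdorffness, metrizability, continuity of the induced $G$-action) that defining $X_{\operatorname{dm}}$ directly as $X/R_{\operatorname{dm}}$ would otherwise require.
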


\subsection*{Convention}

Throughout this article, $G$ denotes an amenable \lcsigma group, unless explicitly stated otherwise. 
We fix a (left) Haar measure on $G$, denoted by $\haar{\cdot}$.
For readability, we often omit explicit mention of the group $G$, and refer simply to an action, a factor map, or a Følner sequence. Unless stated otherwise, it is understood that all such objects refer to the same fixed group $G$.

\section{Preliminaries}
\label{sec:prelims}

We denote $\mathbb{R}^+:=[0,\infty)$. 
If $X$ is a topological space, we denote by $\single(X)$ and $\hyper(X)$ the sets of all singletons and non-empty compact subsets, respectively.
Whenever we speak of \emph{measurability}, we refer to the Borel $\sigma$-algebra.
We write $\hyperhaar(G)$ for the set of all compact subsets $K$ of $G$ with $\haar{K}>0$.

\subsection{Hyperspaces}
Let $(X,d)$ be a compact metric space. 
We write 
$\ball_\epsilon^d(x):=\{x'\in X;\, d(x,x')\leq \epsilon\}$ 
and 
$B_\epsilon^d(x):=\{x'\in X;\, d(x,x')< \epsilon\}$
for $\epsilon>0$ and $x\in X$. 
For $A\in \hyper(X)$ we denote $\diam_d(A)=\max_{x,x'\in A}d(x,x')$. 
If the metric is understood from the context, we simply write $\ball_\epsilon(x)$, $B_\epsilon(x)$ and $\diam(A)$. 
For $A\in \hyper(X)$ and $\epsilon>0$ we denote $\ball_\epsilon(A):=\bigcup_{x\in A}\ball_\epsilon(x)$. 
The \emph{Hausdorff metric} is given by 
\begin{align*}
    d_\hyper(A,A'):=\min\{\epsilon>0;\, A\subseteq \ball_\epsilon(A') \text{~~~and~~~}A'\subseteq \ball_\epsilon(A)\},
\end{align*}
for $A,A'\in \hyper(X)$.
Note that, as a straightforward consequence of the compactness of $X$, the infimum in the above formula is attained. 
It is well known that $d_\hyper$ is a metric on $\hyper(X)$ and that $\hyper(X)$ equipped with the Hausdorff metric $d_\hyper$ is a compact metric space. 
We recommend \cite{nadler1992continuum} for more details on hyperspaces and note that $\diam\colon \hyper(X)\to \mathbb{R}^+$ is continuous.

\subsection{Amenable groups and F\o lner sequences}
Let $G$ be a locally compact and $\sigma$-compact group (\emph{\lcsigma group}) and choose a (left) Haar measure $\haar{\cdot}$ on $G$. 
A sequence $(F_n)$ in $\hyperhaar(G)$ is called \emph{left F\o lner} if
$\haar{F_n\Delta KF_n}/\haar{F_n}\to 0$ 
holds for all $K\in \hyper(G)$, where $\Delta$ denotes the symmetric difference. 
It is called \emph{right F\o lner} whenever 
$\haar{F_n\Delta F_nK}/\haar{F_n}\to 0$ 
holds for all $K\in \hyper(G)$. 
A \lcsigma group that allows for a left F\o lner sequence is called \emph{amenable}.

\subsection{Actions}
\label{subsec:prelimsActions}
Let $G$ be a topological group and $X$ be a compact metric space. 
A \emph{(continuous) action} of $G$ on $X$ is a group homomorphism $\alpha$ from $G$ into the group of homeomorphisms $X\to X$ such that $G\times X\to X\colon (g,x)\mapsto \alpha(g)(x)$ is continuous. 
For $g\in G$ and $x\in X$, we suppress the symbol for the action by simply writing $g.x:=\alpha(g)(x)$. 
This allows us to simply speak of an \emph{action $X$ of $G$}. 
Whenever the group $G$ is given by the context, we simply speak of an action $X$. 
See \cite{auslander1988minimal} for more details on actions.   

For $x\in X$ we denote $O(x):=\{g.x; g\in G\}$ for the \emph{orbit} of $x$. 
A point $x\in X$ is called \emph{transitive} if it has a dense orbit. 
An action is called \emph{transitive} if it allows for a transitive point. It is called \emph{minimal} if every point is transitive. 

A subset $M\subseteq X$ is called \emph{invariant} if $M=g.M:=\{g.x;\, x\in M\}$ holds for all $g\in G$. 
For an invariant $A\in \hyper(X)$ the action of $G$ on $X$ naturally induces an action of $G$ on $A$. We speak of a \emph{subaction (of $X$)} in this case.
Whenever $(X_n)_{n\in \mathbb{N}}$ is a countable family of actions, then we consider the action of $G$ on $\prod_n X_n$ given by $g.(x_n)_n:=(g.x_n)_n$ for $g\in G$ and $(x_n)_n\in \prod_n X_n$.     
Furthermore, whenever $X$ is an action, then $(g,A)\mapsto g.A$ induces an action of $G$ on the hyperspace $\hyper(X)$. 
We simply write $\prod_n X_n$ and $\hyper(X)$ for the respective actions.

\subsection{Factor maps}
A continuous surjection $\pi\colon X\to Y$ between actions is called a \emph{factor map} (also \emph{factor of $X$}) if $\pi(g.x)=g.\pi(x)$ holds for all $g\in G$ and $x\in X$. A factor map is called a \emph{conjugacy} if it is a homeomorphism. 
Note that any factor map is \emph{closed}, i.e.\ that $\pi(A)$ is closed for all closed $A\subseteq X$. 

An action $X$ is called \emph{equicontinuous} if there exists a continuous metric $d$ on $X$ such that $d(g.x,g.x')=d(x,x')$ for all $x,x'\in X$. 
For any action $X$ there exists a \emph{maximal equicontinuous factor} of $X$, i.e.\ a factor map $\pi_{\operatorname{eq}}\colon X\to X_{\operatorname{eq}}$ onto an equicontinuous action $X_{\operatorname{eq}}$ such that for any factor map $\pi\colon X\to Y$ onto an equicontinuous action there exists a factor map $\psi\colon X_{\operatorname{eq}}\to Y$ with $\pi=\psi \circ \pi_{\operatorname{eq}}$. For details see \cite{auslander1988minimal}.

\subsection{Invariant measures}
We denote by $\mathcal{M}(X)$ the set of all Borel probability measures on $X$.
Equipped with the weak*-topology $\mathcal{M}(X)$ becomes a compact metric space \cite{glasner2003ergodic, villani2003topics, fuhrmann2025continuity}. 
For $\mu\in \mathcal{M}(X)$ and a continuous surjection $\pi\colon X\to Y$ we denote $\pi_*\mu$ for the \emph{pushforward measure}, given by 
$\pi_*\mu(M):=\mu(\pi^{-1}(M))$ for all measurable $M\subseteq Y$.
Clearly, we have $\pi_*\mu\in \mathcal{M}(Y)$. 

For $g\in G$ and $\mu \in \mathcal{M}(X)$ we denote $g_*\mu$ for the pushforward under the homeomorphism induced by $g$. 
A measure $\mu\in \mathcal{M}(X)$ is called \emph{invariant} if $g_*\mu=\mu$ holds for all $g\in G$. 
We denote by $\mathcal{M}_G(X)$ the set of all invariant $\mu\in \mathcal{M}(X)$. 
An action is called \emph{uniquely ergodic} if it allows for a unique invariant probability measure. 
An invariant measure $\mu\in \mathcal{M}(X)$ is called \emph{ergodic} if for each invariant measurable $M\subseteq X$ we have that $\mu(M)\in \{0,1\}$. 
Let $\pi\colon X\to Y$ be a factor map. 
Note that for $\mu\in \mathcal{M}_G(X)$ we have $\pi_*\mu\in \mathcal{M}_G(Y)$ and that ergodicity is preserved under pushing forward. 

\subsection{Actions of amenable \lcsigma groups}
Let $X$ be an action of an amenable \lcsigma~group $G$. 
For $x\in X$, we denote by $\delta_x$ the Dirac measure at $x$. 
For $K\in \hyperhaar(G)$ and $\mu\in \mathcal{M}(X)$ we denote 
$K_*\mu := {1}/{\haar{K}}\int_K (g^*\mu) \dd g$, i.e.\ 
\[
(K_*\mu)(M)
:=\frac{1}{\haar{K}}\int_K \mu(g^{-1}.M)\dd g
\] 
for all $M\subseteq X$ measurable. 
Note that $K_*\mu\in \mathcal{M}(X)$. 
For a left (or right) F\o lner sequence $\mathcal{F}=(F_n)$ in $G$ and $\mu\in \mathcal{M}(X)$ we say that $x$ is \emph{$\mathcal{F}$-generic for $\mu$} if $((F_n)_*\delta_x)_{n\in \mathbb{N}}$ converges to $\mu$ in $\mathcal{M}(X)$. 
We say that $x$ is \emph{$\mathcal{F}$-generic} if it is $\mathcal{F}$-generic for some $\mu\in \mathcal{M}(X)$.     
Note that for $x\in X$ any left F\o lner sequence $\mathcal{F}$ allows for a subsequence ${\mathcal{F}}'$ such that $x$ is {${\mathcal{F}}'$-generic}. 
Furthermore, from \cite[Theorem 2.4]{fuhrmann2022structure} we know that for an ergodic measure $\mu\in \mathcal{M}_G(X)$ and any left F\o lner sequence $\mathcal{F}$ there exists a subsequence $\mathcal{F}'$ such that $\mu$-almost every point is $\mathcal{F}$-generic for $\mu$. 

It follows from a standard Krylov-Bogolyubov argument that for a left F\o lner sequence $\mathcal{F}=(F_n)$ and $x\in X$ any cluster point of $((F_n)_*\delta_x)$ is invariant.
Thus, any point in a uniquely ergodic action is $\mathcal{F}$-generic w.r.t.\ all left F\o lner sequences $\mathcal{F}$. 
Furthermore, from the sequential compactness of $\mathcal{M}(X)$ it is straightforward to observe that $\mathcal{M}_G(X)\neq \emptyset$.
Another straightforward argument yields that for a factor map $\pi\colon X\to Y$ and an ergodic $\nu\in \mathcal{M}_G(Y)$ there exists $\mu\in \mathcal{M}_G(X)$ with $\pi_*\mu=\nu$. 
These observations rely on the amenability of $G$.

\subsection{Subadditivity techniques}
\label{subsec:subadditivity}
A function $H\colon \hyperhaar(G)\to \mathbb{R}^+$ is called \emph{right dominated} if for all $K\in \hyperhaar(G)$ we have $\sup_{g\in G}H(Kg)/\haar{Kg}<\infty$. 
It is called 
\emph{right measurable} if the function $G\to \mathbb{R}^+$ defined by $g\mapsto H(Kg)$ is measurable for all $K\in \hyperhaar(G)$.
A right measurable function $H\colon\hyperhaar(G)\to \mathbb{R}^+$ is said to satisfy~\ref{S}, if for all $K,F\in \hyperhaar(G)$ we have 
\begin{align}
    \label{S}
        H(F)\leq \frac{1}{\haar{K}}\int_{K^{-1}F} H(Kg)\dd g. \tag{$\mathfrak{S}$}
\end{align}
Note that the integral in the above formula is well defined, since $g\mapsto H(Kg)\in \mathbb{R}^+$ is measurable. 

\begin{remark}
\label{rem:shearer}
    For countable discrete amenable groups $G$, Shearer's inequality \cite[Definition~2.1]{downarowicz2016shearers} implies \ref{S}. This implication follows from the arguments presented in the proof of \cite[Proposition 3.3]{downarowicz2016shearers}. However, since we are not aware of an appropriate formulation of Shearer's inequality for non-discrete amenable groups, we introduce \ref{S}, which is better suited to our setting.
    Note that \ref{S} serves as an intermediate condition between Shearer's inequality and the so-called 'infimum rule' \cite[Definition 3.1]{downarowicz2016shearers}, which was implicitly employed in the proof of \cite[Proposition 3.3]{downarowicz2016shearers}.
\end{remark}

\begin{example}
\label{exa:SpropertyForGfunctions}
    For bounded and measurable $f\colon G\to \mathbb{R}^+$ the function 
    \[H\colon \hyperhaar(G)\to \mathbb{R}^+\colon K\mapsto\int_K f(g) \dd g \]
    is right dominated, right measurable and satisfies~\ref{S}. 
\end{example}
\begin{proof}
    Since compact subsets of $G$ have finite Haar measure and $f$ is bounded and measurable,
    the integral defining $H$ is well-defined and finite.
    Furthermore, the boundedness of $f$ yields 
    \[\sup_{g\in G}\frac{H(Kg)}{\haar{Kg}}=\sup_{g\in G} \frac{1}{\haar{Kg}}\int_{Kg} f(h)\dd h \leq \|f\|_\infty,\]
    for all $K\in \hyperhaar(G)$, i.e.\ that $H$ is right dominated. 
    From the continuity of the multiplication $G\times G\to G$ we know that $(h,g)\mapsto f(hg)$ is measurable. 
    Using the Fubini-Tonelli theorem we observe that $H$ is right measurable and compute the following for $K,F\in \hyperhaar(G)$. 
    \begin{align*}
        \int_{K^{-1}F} H(Kg)\dd g 
        &=\int_{K^{-1}F} \int_{K} f(hg)\dd h\dd g 
        =\int_{K}\int_{K^{-1}F} f(hg)\dd g \dd h\\
        &\geq \int_{K}\int_{h^{-1}F} f(hg)\dd g\dd h
        = \haar{K}\int_{F} f(g) \dd g
        = \haar{K}H(F).
    \end{align*}
    This verifies that $H$ satisfies~\ref{S}. 
\end{proof}

\begin{proposition}
\label{pro:subadditivityGeneral}
    Let $H\colon \hyperhaar(G)\to \mathbb{R}^+$ be a right dominated and right measurable function that satisfies~\ref{S}, and $(F_n')$ a F\o lner sequence in $G$. We have 
    \begin{align*}
        \lim_{n\to \infty} \sup_{g\in G} \frac{H(F_n'g)}{\haar{F_n'g}}
        =
        \inf_{K\in \hyperhaar(G)}\sup_{g\in G} \frac{H(Kg)}{\haar{Kg}}
        &=
        \sup_{(F_n)} \limsup_{n\to \infty} \frac{H(F_n)}{\haar{F_n}} <\infty,
    \end{align*}
    where the supremum, taken over all F\o lner sequences $(F_n)$, is attained. 
    Moreover, the same statement holds with $\limsup$ replaced by $\liminf$.
\end{proposition}
\begin{remark}
\label{rem:modularFunction}
    $G$ is called \emph{unimodular} if $\haar{\cdot}$ is additionally right invariant. 
    Note that Abelian \lcsigma~groups, as well as discrete groups are unimodular. 
    If $G$ is a unimodular amenable \lcsigma~group, then the above formulas can be simplified by rewriting $H(Kg)/\haar{Kg}=H(Kg)/\haar{K}$ for $K\in \hyperhaar(G)$. 
    
    The \emph{modular function} $\chi_G\colon G\to (0,\infty)$ is defined by $\chi_G(g):=\haar{Kg}/\haar{K}$. We regard $(0,\infty)$ as a multiplicative group. 
    It is known that $\chi_G$ is a continuous group homomorphism and that it is independent of the choice of $K\in \hyperhaar(G)$ \cite[Chapter~1]{deitmar2014principles}. 
    For non-unimodular groups, $\chi_G \not \equiv 1$, and hence $\chi_G(G)$ is a non-trivial subgroup of $(0,\infty)$. 
    It follows that $\chi_G(G)$ is unbounded.  
    For $H$ defined by $H(K):=\haar{K}$ we know from Example~\ref{exa:SpropertyForGfunctions} (with $f\equiv 1$) that $H$ is right dominated, right measurable and satisfies \ref{S}. 
    Nevertheless, for any $K\in \hyperhaar(G)$ we have 
    $
    \sup_{g\in G}{H(Kg)}/{\haar{Kg}}
    =
    1
    <
    \infty 
    =
    \sup_{g\in G}\chi_G(g)
    =
    \sup_{g\in G}{H(Kg)}/{\haar{K}}.$
    Thus, the discussed simplification is only possible in the context of unimodular groups. 
\end{remark}

\begin{remark} 
    A function $H\colon \hyperhaar(G)\to \mathbb{R}^+$ is called \emph{right invariant} if $H(Kg)=H(K)$ holds for all $g\in G$. 
    If $G$ is unimodular and $H$ is right invariant and satisfies \ref{S}, then $H$ is right dominated and right measurable, and Proposition~\ref{pro:subadditivityGeneral} yields that for any F\o lner sequence $(F_n)$ in $G$ we have 
    \[\frac{H(F_n)}{\haar{F_n}}
    \to 
    \inf_{K\in \hyperhaar(G)}\frac{H(K)}{\haar{K}}.
    \]
    Recall from Remark~\ref{rem:shearer} that, for countable discrete groups, Shearer's inequality implies \ref{S}.
    Thus, Proposition~\ref{pro:subadditivityGeneral} extends \cite[Proposition 3.3]{downarowicz2016shearers}, from which the present result was inspired.
\end{remark}

\begin{proof}[Proof of Proposition \ref{pro:subadditivityGeneral}:]
    Since $H$ is right dominated, for each $n\in \mathbb{N}$ we have 
    $
    \sup_{g\in G} {H(F_n'g)}/{\haar{F_n'g}}
    <
    \infty
    $ 
    and find $g_n\in G$ with
    \begin{align*}
        \sup_{g\in G} \frac{H(F_n'g)}{\haar{F_n'g}}\leq \frac{H(F_n'g_n)}{\haar{F_n'g_n}}+\frac{1}{n}. 
    \end{align*}
    Since $(F_n'g_n)$ is a left F\o lner sequence and $F_n'\in \hyperhaar(G)$ for every $n$, we have 
    \begin{align*}
        \inf_{K\in \hyperhaar(G)}\sup_{g\in G} \frac{H(Kg)}{\haar{Kg}}
        &\leq 
        \liminf_{n\to \infty} \sup_{g\in G} \frac{H(F_n'g)}{\haar{F_n'g}}
        \leq 
        \limsup_{n\to \infty} \sup_{g\in G} \frac{H(F_n'g)}{\haar{F_n'g}}\\
        &\leq 
        \limsup_{n\to \infty} \frac{H(F_n'g_n)}{\haar{F_n'g_n}}
        \leq 
        \sup_{(F_n)} \limsup_{n\to \infty} \frac{H(F_n)}{\haar{F_n}},
    \end{align*}
    where the supremum is taken over all F\o lner sequences $(F_n)$.
    Similarly, we observe 
    \begin{align*}
        \inf_{K\in \hyperhaar(G)}\sup_{g\in G} \frac{H(Kg)}{\haar{Kg}}
        \leq 
        \sup_{(F_n)} \liminf_{n\to \infty} \frac{H(F_n)}{\haar{F_n}}
        \leq 
        \sup_{(F_n)} \limsup_{n\to \infty} \frac{H(F_n)}{\haar{F_n}}.
    \end{align*}
    Since $H$ is right dominated we have 
    $
    \inf_{K\in \hyperhaar(G)}\sup_{g\in G} {H(Kg)}/{\haar{Kg}}<\infty. 
    $
    Thus, it remains to show that for every F\o lner sequence $(F_n)$ we have 
    \begin{align}
    \label{equ:DFRargument}
    \tag{$\spadesuit$}
        \limsup_{n\to \infty} \frac{H(F_n)}{\haar{F_n}}
        &\leq 
        \inf_{K\in \hyperhaar(G)}\sup_{g\in G} \frac{H(Kg)}{\haar{Kg}}.    
    \end{align}
    For this consider $K\in \hyperhaar(G)$ and $\epsilon>0$.
    Recall from Remark~\ref{rem:modularFunction} that $\chi_G$ denotes the modular function.
    Since $K^{-1}F_n$ is compact and $\chi_G$ is continuous, 
    we find $h_n\in K^{-1}F_n$ with 
    $\chi_G(h_n)=\max_{g\in K^{-1}F_n}\chi_G(g)$. 
    For $g\in (Kh_n)^{-1}F_n=h_n^{-1}K^{-1}F_n$ we have $h_ng\in K^{-1}F_n$ and hence  
    \begin{align*}        \chi_G(g)=\chi_G(h_n^{-1})\chi_G(h_n g)\leq \chi_G(h_n^{-1})\chi_G(h_n)= 1. 
    \end{align*}
    Thus, from \ref{S} we observe
    \begin{align*}
        H(F_n)
        &\leq 
        \frac{1}{\haar{Kh_n}}\int_{(Kh_n)^{-1}F_n} H(Kh_ng) \dd g
        =
        \int_{(Kh_n)^{-1}F_n}
        \frac{\haar{Kh_ng}}{\haar{Kh_n}}
        \frac{H(Kh_ng)}{\haar{Kh_ng}} \dd g\\
        &\leq 
        \left(\int_{(Kh_n)^{-1}F_n}\chi_G(g) \dd g\right)
        \left(\sup_{g\in G} \frac{H(Kg)}{\haar{Kg}}\right)\\
        &\leq
        \haar{(Kh_n)^{-1}F_n}
        \left(\sup_{g\in G} \frac{H(Kg)}{\haar{Kg}}\right).     
    \end{align*}
    For large $n$ we have $\haar{K^{-1}F_n\Delta F_n}\leq \epsilon \haar{F_n}$ and hence  
    \begin{align*}
        \haar{(Kh_n)^{-1}F_n}
        =
        \haar{h_n^{-1}K^{-1}F_n}
        = 
        \haar{K^{-1}F_n}\leq \haar{K^{-1}F_n\Delta F_n}+\haar{F_n}\leq (1+\epsilon)\haar{F_n}.     
    \end{align*}
    This shows 
    \begin{align*}
        H(F_n)
        \leq
        \haar{(Kh_n)^{-1}F_n}
        \left(\sup_{g\in G} \frac{H(Kg)}{\haar{Kg}}\right)
        \leq 
        (1+\epsilon)\haar{F_n}
        \left(\sup_{g\in G} \frac{H(Kg)}{\haar{Kg}}\right)
    \end{align*}
    for large $n$ and hence
    \begin{align*}
        \limsup_{n\to \infty} \frac{H(F_n)}{\haar{F_n}}
        \leq (1+\epsilon)
        \sup_{g\in G} \frac{H(Kg)}{\haar{Kg}}.
    \end{align*}
    The claim \eqref{equ:DFRargument} follows from the arbitrariness of $\epsilon>0$. 
\end{proof}

\subsection{Upper densities}
Let $G$ be an amenable \lcsigma group. 
Consider a measurable subset $G'\subseteq G$. 
For a left (or right) F\o lner sequence $\mathcal{F}=(F_n)$ we define the 
\emph{upper asymptotic density} as $\upperAdens_\mathcal{F}(G'):=\limsup_{n\to \infty}\haar{G'\cap F_n}/\haar{F_n}$ and the 
\emph{lower asymptotic density} as $\lowerAdens_\mathcal{F}(G'):=\liminf_{n\to \infty}\haar{G'\cap F_n}/\haar{F_n}$. 
Furthermore, we define the \emph{upper Banach density} as 
$\upperBdens(G'):=\sup_{\mathcal{F}}\upperAdens_{\mathcal{F}}(G')$, where the supremum is taken over all left F\o lner sequences $\mathcal{F}$ in $G$. 
The following lemma shows that our definition agrees with the definition in the literature. See \cite[Section 2.2]{downarowicz2019tilings} and the references therein for more details in the context of actions of countable amenable groups. 

\begin{lemma}    \label{lem:upperBanachDensityCharacterization}
    For any measurable $G'\subseteq G$ we have 
    \[
        \upperBdens(G')
        =\sup_{\mathcal{F}}\upperAdens_{\mathcal{F}}(G')
        =\sup_{\mathcal{F}}\lowerAdens_{\mathcal{F}}(G') 
        =\inf_{K\in \hyperhaar(G)}\sup_{g\in G}\frac{\haar{G'\cap Kg}}{\haar{Kg}},
    \]
    where the suprema, taken over all left F\o lner sequences $\mathcal{F}$ in $G$, are attained. 
    Furthermore, for every left F\o lner sequence $(F_n)$, we have  
    \[
        \sup_{g\in G}\frac{\haar{G'\cap F_n g}}{\haar{F_ng}} \to \upperBdens(G').
    \]
\end{lemma}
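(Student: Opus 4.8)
The plan is to derive the entire statement from Proposition \ref{pro:subadditivityGeneral} applied to the single function $H\colon \hyperhaar(G)\to \mathbb{R}^+$ given by $H(K):=\haar{G'\cap K}$. Writing $H(K)=\int_K \mathbf{1}_{G'}$, where $\mathbf{1}_{G'}$ is the indicator function of $G'$, I observe that $\mathbf{1}_{G'}$ is bounded and measurable (the latter since $G'$ is measurable), so Example \ref{exa:SpropertyForGfunctions} immediately yields that $H$ is bounded and satisfies \ref{S}. This is the only place where a property has to be checked, and it reduces to a direct citation of the Example.

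With $H$ fixed I would next record the two bookkeeping identifications that translate the abstract conclusions into the language of densities. First, $\hat{H}(K)=\sup_{g\in G}H(Kg)=\sup_{g\in G}\haar{G'\cap Kg}$, so that $\hat{H}(K)/\haar{K}=\sup_{g\in G}\haar{G'\cap Kg}/\haar{K}$, which is precisely the quantity over which the infimum on the right-hand side of the claimed chain is taken. Second, $H(F_n)/\haar{F_n}=\haar{G'\cap F_n}/\haar{F_n}$, whose $\limsup$ and $\liminf$ are by definition $\upperAdens_{\mathcal{F}}(G')$ and $\lowerAdens_{\mathcal{F}}(G')$, respectively.

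Under these identifications, the chain of equalities is read off from Proposition \ref{pro:subadditivityGeneral}(iii): the maxima $\max_{\mathcal{F}}\limsup_n H(F_n)/\haar{F_n}$ and $\max_{\mathcal{F}}\liminf_n H(F_n)/\haar{F_n}$ are attained and both equal $\inf_{K}\hat{H}(K)/\haar{K}$, which becomes $\max_{\mathcal{F}}\upperAdens_{\mathcal{F}}(G')=\max_{\mathcal{F}}\lowerAdens_{\mathcal{F}}(G')=\inf_{K\in\hyperhaar(G)}\sup_{g}\haar{G'\cap Kg}/\haar{K}$. Since $\upperBdens(G')$ is by definition $\sup_{\mathcal{F}}\upperAdens_{\mathcal{F}}(G')$ and the maximum is attained, the supremum is realized by some left F\o lner sequence; hence $\upperBdens(G')$ coincides with all three expressions, which establishes the displayed chain. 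For the \emph{furthermore} claim I would invoke Proposition \ref{pro:subadditivityGeneral}(ii): for any left F\o lner sequence $\mathcal{F}=(F_n)$ the quotient $\hat{H}(F_n)/\haar{F_n}=\sup_{g\in G}\haar{G'\cap F_n g}/\haar{F_n}$ converges to $\inf_{K}\hat{H}(K)/\haar{K}$, which by the preceding paragraph equals $\upperBdens(G')$.

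I do not anticipate a genuine obstacle, since the statement is essentially a specialization of the subadditivity machinery already developed. The only points that require care are the passage from $\sup_{\mathcal{F}}$ to $\max_{\mathcal{F}}$ (which rests on the attainment asserted in part (iii)) and the clean matching of the three quotient quantities with $\upperAdens_{\mathcal{F}}$, $\lowerAdens_{\mathcal{F}}$, and the infimand, together with confirming that $\mathbf{1}_{G'}$ satisfies the hypotheses of Example \ref{exa:SpropertyForGfunctions}.
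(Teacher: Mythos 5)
Your proposal is correct and is essentially identical to the paper's proof: the paper likewise sets $H(K):=\haar{G'\cap K}=\int_K \mathbf{1}_{G'}$, invokes Example \ref{exa:SpropertyForGfunctions} to get boundedness and \ref{S}, and then reads off the full statement from Proposition \ref{pro:subadditivityGeneral}. The identifications you spell out explicitly (matching $\hat{H}(K)/\haar{K}$ with the infimand and $H(F_n)/\haar{F_n}$ with the asymptotic densities) are left implicit in the paper but are exactly the intended bookkeeping.
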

\begin{proof}
    For $K\in \hyperhaar(G)$ denote $H(K):=\haar{G'\cap K}$. 
    Since the characteristic function $\mathbf{1}_{G'}\colon G\to \mathbb{R}^+$ is bounded, measurable and satisfies 
    $H(K)=\int_K \mathbf{1}_{G'}(g) \dd g$ we observe from Example~\ref{exa:SpropertyForGfunctions} that $H\colon \hyperhaar(G)\to \mathbb{R}^+$ is right dominated, right measurable and satisfies \ref{S}. 
    Thus, the statement follows from Proposition \ref{pro:subadditivityGeneral}. 
\end{proof}

\section{Averages of functions}
\label{sec:averages}

Throughout this section, let $X$ be an action of an amenable \lcsigma group $G$. 
We denote by $\bounded(X)$ the space of all bounded functions $X \to \mathbb{R}$, and by $C(X)$ the subspace of continuous functions.
We equip $\bounded(X)$ and $C(X)$ with the supremum norm and the pointwise ordering.
For $g\in G$ and $f\in \bounded(X)$, we denote $g^*f:=f\circ g$, 
where $g$ is identified with the homeomorphism $x\mapsto g.x$.
A function $f\in \bounded(X)$ is called \emph{invariant} if $g^*f=f$ for all $g\in G$. 
It is called \emph{positive} if $f(x)\geq 0$ holds for all $x\in X$. 
We denote by $\bounded_G(X)$ and $C_G(X)$ the sets of invariant functions in $\bounded(X)$ and $C(X)$, respectively. 
Similarly, we write $C^+(X)$ and $\bounded^+(X)$ for the respective sets of positive functions and abbreviate 
$C_G^+(X):=C^+(X)\cap C_G(X)$ and $\bounded_G^+(X):=\bounded^+(X)\cap \bounded_G(X)$.
For $K\in \hyperhaar(G)$, $f\in C(X)$, and $x\in X$ we denote 
    \[(K^*f)(x):=\frac{1}{\haar{K}}\int_K f(g.x) \dd g\]
and observe that this establishes a linear contraction $K^*\colon C(X)\to \bounded(X)$. 
For a left (or right) F\o lner sequence $\mathcal{F}=(F_n)$ in $G$ we denote 
$\average_\mathcal{F}f:=\limsup_{n\to \infty} F_n^*f$. 
Furthermore, we denote $\average f:=\sup_{\mathcal{F}} \average_\mathcal{F}f$, where the supremum is taken over all left F\o lner sequences. 
We observe that $\average$ and $\average_\mathcal{F}$ are mappings $C(X)\to \bounded(X)$. We next summarize some of their properties, omitting the straightforward proofs.

\begin{proposition}
\label{pro:averageBasicProperties}
    The mapping $\average\colon C(X)\to \bounded(X)$ is 
    \begin{itemize} 
        \item a \emph{contraction}, i.e.\ $\|\average f- \average h\|_\infty\leq \|f-h\|_\infty$ for all $f,h\in C(X)$. 
        \item \emph{monotone}, i.e.\ $\average f\leq \average h$ for all $f,h\in C(X)$ with $f\leq h$. 
        \item \emph{subadditive}, i.e.\ $\average(f+h)\leq \average f + \average h$ for all $f,h\in C(X)$. 
        \item \emph{positively homogeneous}, i.e.\ $\average(\lambda f)=\lambda \average f$ for $f\in C(X)$ and $\lambda\in \mathbb{R}^+$.
        \item \emph{$C_G^+(X)$-multiplicative}, i.e.\ $\average(hf)=h \average f$ for $f\in C(X)$, $h\in C_G^+(X)$. 
        \item \emph{translation invariant}, i.e.\ $\average(f+\lambda)=(\average f)+\lambda$ for $f\in C(X)$ and $\lambda\in \mathbb{R}$. 
        \item \emph{$C_G(X)$-additive}, i.e.\ $\average(h+f)=h+\average f$ for $f\in C(X)$ and $h\in C_G(X)$.   
    \end{itemize}
    The same properties are satisfied for $\average_\mathcal{F}$ for any left (or right) F\o lner sequence $\mathcal{F}$. 
    Furthermore, the mapping $\average\colon C(X)\to \bounded(X)$  
    \begin{itemize}   
        \item is \emph{invariant}, i.e.\ $\average(g^*f)=\average f$ for all $f\in C(X)$ and $g\in G$.\\
        The same property is satisfied for $\average_\mathcal{F}$ for any left F\o lner sequence $\mathcal{F}$.  
        \item satisfies $\average f\in \bounded_G(X)$ for all $f\in C(X)$.\\
        The same property is satisfied for $\average_\mathcal{F}$ for any right F\o lner sequence $\mathcal{F}$. 
    \end{itemize}
\end{proposition}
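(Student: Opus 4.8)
The plan is to establish each listed property first for the elementary averaging operators $K^*$ with $K\in\hyperhaar(G)$, and then to transport it through the two limiting operations $\average_\mathcal{F}=\limsup_n F_n^*$ and $\average=\sup_\mathcal{F}\average_\mathcal{F}$. At the elementary level everything is immediate from the linearity and positivity of the Haar integral: $K^*$ is linear, positive, a contraction, and satisfies $K^*\mathbf{1}=\mathbf{1}$. Moreover, for an invariant $h$ one has $h(g.x)=h(x)$, so that $K^*(hf)=h\,K^*f$ and $K^*(h+f)=h+K^*f$, and in particular $K^*h=h$; these identities already encode the multiplicative, additive and translation-invariant behaviour, while the contraction estimate $\|K^*f-K^*h\|_\infty\le\|f-h\|_\infty$ follows by bounding the integrand pointwise.

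Next I would lift the order-algebraic properties. Monotonicity, subadditivity and positive homogeneity are preserved by the $\limsup$ over $(F_n)$ and then by the supremum over all left F\o lner sequences, using the elementary inequalities $\limsup(a_n+b_n)\le\limsup a_n+\limsup b_n$ and $\limsup(\lambda a_n)=\lambda\limsup a_n$ for $\lambda\ge 0$, together with the fact that these pass through a supremum. The contraction property follows from the pointwise bound $F_n^*f\le F_n^*h+\|f-h\|_\infty$, taking first the $\limsup$ and then the supremum. For an invariant $h\in C_G(X)$ and a scalar $\lambda$, the values $h(x)$ and $\lambda$ are fixed at each point, so they factor additively out of both the $\limsup$ and the $\sup$ regardless of sign; positivity of $h$ is needed precisely for the multiplicative identity $\average(hf)=h\average f$, since pulling a pointwise factor out of a $\limsup$ requires it to be non-negative, which is why this property is stated only for $h\in C_G^+(X)$.

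The genuinely interesting part is the invariance of the operators, which I would treat separately. For the operator-invariance $\average_\mathcal{F}(g^*f)=\average_\mathcal{F}f$ along a left F\o lner sequence, left translation rewrites $F_n^*(g^*f)(x)$ as the average of $f(\,\cdot\,.x)$ over $gF_n$, so that $|F_n^*(g^*f)(x)-F_n^*f(x)|\le\|f\|_\infty\,\haar{gF_n\Delta F_n}/\haar{F_n}$, which tends to $0$ by the left F\o lner condition applied to $K=\{g\}$; passing to the $\limsup$ gives the claim, and taking the supremum over $\mathcal{F}$ yields it for $\average$. For the output-invariance I would use that $G$, being amenable and locally compact, is unimodular, so that left and right Haar measure coincide; this furnishes the key identity $(F_n^*f)(g.x)=((F_ng)^*f)(x)$. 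Along a right F\o lner sequence the difference $|F_n^*f(g.x)-F_n^*f(x)|$ is then bounded by $\|f\|_\infty\,\haar{F_ng\Delta F_n}/\haar{F_n}\to 0$, proving $\average_\mathcal{F}f\in\bounded_G(X)$. For $\average$ I would instead observe that $(F_n)\mapsto(F_ng)$ is a bijection of the set of left F\o lner sequences onto itself (right translation by the fixed $g$ preserves the left F\o lner condition, again by unimodularity), whence $\average f(g.x)=\sup_\mathcal{F}\limsup_n((F_ng)^*f)(x)=\sup_{\mathcal{F}'}\average_{\mathcal{F}'}f(x)=\average f(x)$.

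I expect the output-invariance $\average f\in\bounded_G(X)$ to be the only step that requires a real idea: unlike the other properties it need not hold for an individual left F\o lner sequence and is recovered only after taking the supremum, via the reparametrization $(F_n)\mapsto(F_ng)$ and the unimodularity of $G$. Everything else is a routine transport of the corresponding property of $K^*$ through the $\limsup$ and the $\sup$.
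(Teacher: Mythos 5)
The paper offers no written proof of this proposition (it is stated ``omitting the straightforward proofs''), and your overall route --- establish each property for the elementary operators $K^*$, then transport it through $\limsup_n F_n^*$ and $\sup_\mathcal{F}$ --- is precisely the intended straightforward argument. The order-algebraic part, the correct observation that positivity of $h$ is what lets a pointwise factor pass through $\limsup$ and $\sup$, and the proof of the invariance $\average(g^*f)=\average f$ via $\haar{gF_n\Delta F_n}/\haar{F_n}\to 0$ (the left F\o lner condition with $K=\{g\}$) are all sound.

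There is, however, one genuine factual error in your treatment of the output-invariance: an amenable locally compact group need \emph{not} be unimodular. The affine group $\operatorname{Aff}(\mathbb{R})$ (the $ax+b$ group) is solvable, hence amenable, and is locally compact and $\sigma$-compact, yet not unimodular. Fortunately, none of the three claims you derive from unimodularity actually requires it. Writing $m(g)$ for the modular function of $G$, the change of variables $u=hg$ gives $\int_{F_n} f((hg).x)\dd h = m(g)^{-1}\int_{F_ng} f(u.x)\dd u$, while $\haar{F_ng}=m(g)\haar{F_n}$; the two factors cancel, so your key identity $(F_n^*f)(g.x)=((F_ng)^*f)(x)$ holds for \emph{every} locally compact group. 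Likewise $\haar{KF_ng\Delta F_ng}/\haar{F_ng}=\haar{KF_n\Delta F_n}/\haar{F_n}$, so right translation by a fixed $g$ maps left F\o lner sequences bijectively to left F\o lner sequences in general --- a fact the paper itself uses (``$(F_n'g_n)_n$ is also a left F\o lner sequence'') in the proof of Proposition \ref{pro:subadditivityGeneral}(iii). One small quantitative consequence: along a right F\o lner sequence your estimate $\|f\|_\infty\haar{F_ng\Delta F_n}/\haar{F_n}$ implicitly assumed $\haar{F_ng}=\haar{F_n}$; in general one should split off the normalization and use $\bigl|\haar{F_ng}-\haar{F_n}\bigr|\leq \haar{F_n\Delta F_ng}$ to obtain, say, $\bigl|((F_ng)^*f)(x)-(F_n^*f)(x)\bigr|\leq 2\|f\|_\infty\haar{F_n\Delta F_ng}/\haar{F_n}$, which still tends to $0$ by the right F\o lner property with $K=\{g\}$. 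With these replacements --- modular-function cancellation instead of unimodularity --- your proof is complete and correct.
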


We next summarize some equivalent approaches for computing $\average$. 

\begin{proposition}
\label{pro:averagesAdifferentApproaches}
    For $f\in C(X)$ we have 
\begin{align*}
\average f
=
\sup_{\mathcal{F}}\limsup_{n\to \infty}F_n^* f
=
\sup_{\mathcal{F}}\liminf_{n\to \infty}F_n^* f
=
\inf_{K\in \hyperhaar(G)}\sup_{g\in G} (Kg)^* f
\end{align*}
where the suprema, taken over all left F\o lner sequences $\mathcal{F}=(F_n)$, are attained pointwise. 
Furthermore, for every left F\o lner sequence $(F_n)$ in $G$ we have that $\sup_{g\in G} (F_ng)^*f$ converges pointwise to $\average f$. 
\end{proposition}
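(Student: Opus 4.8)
The plan is to fix an arbitrary point $x\in X$ and to derive all three assertions from the subadditivity machinery of Proposition \ref{pro:subadditivityGeneral}, applied to a single function on $\hyperhaar(G)$ built from the orbit of $x$. A preliminary reduction lets me assume $f\geq 0$: by Proposition \ref{pro:averageBasicProperties} the operator $\average$ is translation invariant, and each of the quantities $\average f$, $\inf_K\sup_g (Kg)^* f$, $\sup_\mathcal{F}\liminf_n F_n^* f$, and the net $\sup_g (F_n g)^* f$ merely shifts by a constant $\lambda$ when $f$ is replaced by $f+\lambda$ (since $(K^*(f+\lambda))(x)=(K^* f)(x)+\lambda$ for all $K\in\hyperhaar(G)$). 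Hence it suffices to treat $f+\|f\|_\infty\geq 0$ in place of $f$.

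Assuming $f\geq 0$ and fixing $x$, the map $g\mapsto f(g.x)$ is bounded, measurable and nonnegative, so Example \ref{exa:SpropertyForGfunctions} shows that
\[
 H_x\colon\hyperhaar(G)\to\mathbb{R}^+,\qquad H_x(K):=\int_K f(g.x)\dd g,
\]
is bounded and satisfies \ref{S}. The bridge to the statement is the pair of pointwise identities $H_x(K)/\haar{K}=(K^* f)(x)$ and, for the right-invariant majorant $\hat H_x(K):=\sup_{g\in G}H_x(Kg)$ of Proposition \ref{pro:subadditivityGeneral}, $\hat H_x(K)/\haar{K}=\sup_{g\in G}\bigl((Kg)^* f\bigr)(x)$; the latter follows by writing $H_x(Kg)/\haar{Kg}=\bigl((Kg)^* f\bigr)(x)$ and reconciling the normalizations $\haar{Kg}$ and $\haar{K}$.

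With this dictionary, the conclusion is a direct readout. Part (iii) of Proposition \ref{pro:subadditivityGeneral}, applied to $H_x$, gives
\[
 \average f(x)=\max_\mathcal{F}\limsup_n (F_n^* f)(x)=\max_\mathcal{F}\liminf_n (F_n^* f)(x)=\inf_{K\in\hyperhaar(G)}\frac{\hat H_x(K)}{\haar{K}}=\inf_{K\in\hyperhaar(G)}\sup_{g\in G}\bigl((Kg)^* f\bigr)(x),
\]
where the first equality combines the definition $\average f=\sup_\mathcal{F}\limsup_n F_n^* f$ with the attainment furnished by (iii). This yields both displayed identities together with the pointwise attainment of the supremum over F\o lner sequences, the maximizing sequence being allowed to depend on $x$. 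For the final claim I would invoke part (ii): the net $\sup_{g\in G}\bigl((F_n g)^* f\bigr)(x)=\hat H_x(F_n)/\haar{F_n}$ converges to $\inf_K \hat H_x(K)/\haar{K}=\average f(x)$. Since $x$ was arbitrary, the proposition follows.

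The step I expect to require the most care is the verification of the two pointwise identities in the second paragraph, namely matching the right translates $Kg$ that define $\hat H_x$ with the averaging operators $(Kg)^*$ so that the normalizations by Haar measure align and the supremum over $g$ reproduces exactly $\sup_g\bigl((Kg)^* f\bigr)(x)$. Once this is in place, everything else is a transcription of Proposition \ref{pro:subadditivityGeneral}, with only the routine observation that the supremum defining $\average$ and the maximum provided by part (iii) denote the same pointwise value.
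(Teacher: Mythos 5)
Your proposal is correct and is essentially the paper's own proof: the same translation-invariance reduction to $f\geq 0$, the same function $H_x(K)=\int_K f(g.x)\dd g$ shown to be bounded and to satisfy \ref{S} via Example \ref{exa:SpropertyForGfunctions}, and the same readout of Proposition \ref{pro:subadditivityGeneral} through the identities $H_x(K)/\haar{K}=(K^*f)(x)$ and $\hat{H}_x(K)/\haar{K}=\sup_{g\in G}\bigl((Kg)^*f\bigr)(x)$. The normalization point you flag amounts to the identification $\haar{Kg}=\haar{K}$, which is precisely the identification the paper itself makes implicitly (e.g.\ in the step $\hat{H}(F_n')/\haar{F_n'}=H(F_n'g_n)/\haar{F_n'g_n}$ in the proof of Proposition \ref{pro:subadditivityGeneral}(iii)), so your argument is faithful to the paper's.
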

\begin{proof}
    For $f\in C(X)$ there exists $\lambda\in \mathbb{R}$ such that $f+\lambda\geq 0$. 
    Since all terms above are easily seen to be translation invariant, we assume w.l.o.g.\ that $f$ is positive. 
    Fix $x\in X$ and define
    $f_x\colon G\to \mathbb{R}^+$ by $f_x(g):=f(g.x)$. 
    Since $f_x$ is bounded and measurable, Example~\ref{exa:SpropertyForGfunctions} implies that
    $H\colon \hyperhaar(G)\to \mathbb{R}^+$ defined by $H(K):=\int_K f_x(g)\, \dd g$
    is right dominated, right measurable and satisfies~\ref{S}. 
    We have $H(K)/\haar{K}=(K^*f)(x)$  for all $K\in \hyperhaar(G)$ and conclude the statement from Proposition~\ref{pro:subadditivityGeneral}. 
\end{proof}

\begin{proposition}
\label{pro:averagesAandAFPickFolner}
	For $f\in C(X)$ and a countable family $N\subseteq X$ there exists a left F\o lner sequence $\mathcal{F}$ in $G$ with 
	$\average f(x)=\average_\mathcal{F} f(x)$ for all $x\in N$. 
\end{proposition}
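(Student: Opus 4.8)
The plan is to reduce the statement to a pointwise lower bound and then construct the desired F\o lner sequence by a diagonal argument that interleaves, for each point of $N$, a F\o lner sequence tailored to that point. Since $\average f=\sup_{\mathcal{F}}\average_{\mathcal{F}}f$, we have $\average_{\mathcal{F}}f(x)\le \average f(x)$ for every left F\o lner sequence $\mathcal{F}$ and every $x\in X$, so it suffices to produce a single left F\o lner sequence $\mathcal{F}=(F_n)$ with $\limsup_{n}(F_n^*f)(x)\ge \average f(x)$ for all $x\in N$. Enumerate $N=\{x_k\}_k$. For each $k$, Proposition \ref{pro:averagesAdifferentApproaches} ensures that the supremum defining $\average f(x_k)$ is attained, so we may fix a left F\o lner sequence $\mathcal{F}^{(k)}=(A^{(k)}_m)_m$ with $\liminf_{m}(A^{(k)}_m)^*f(x_k)=\average f(x_k)$; since each term is bounded above by $\average f(x_k)$, the limit in fact exists and equals $\average f(x_k)$, so $(A^{(k)}_m)^*f(x_k)\ge \average f(x_k)-\epsilon$ holds for all large $m$.

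Next I would set up the diagonalization. Using $\sigma$-compactness, fix compact sets $K_1\subseteq K_2\subseteq\cdots$ with $e\in K_1$ and $\bigcup_j K_j=G$, so that every compact subset of $G$ is contained in some $K_j$. Choose a sequence $(k_n)_n$ of positive integers in which every value occurs infinitely often. For each $n$, since $\mathcal{F}^{(k_n)}$ is left F\o lner the defect $\haar{K_n A^{(k_n)}_m\Delta A^{(k_n)}_m}/\haar{A^{(k_n)}_m}$ is $\le 1/n$ for all large $m$, and by the previous paragraph $(A^{(k_n)}_m)^*f(x_{k_n})\ge \average f(x_{k_n})-1/n$ also holds for all large $m$. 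Pick one index $m$ realizing both inequalities and set $F_n:=A^{(k_n)}_m$.

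Finally I would verify the two required properties of $\mathcal{F}=(F_n)$. For the F\o lner property, given a compact $K\subseteq G$ choose $j$ with $K\subseteq K_j$; then for $n\ge j$ we have $K\subseteq K_n$ and $e\in K_n$, whence $F_n\subseteq K_nF_n$ and $KF_n\subseteq K_nF_n$, giving $\haar{KF_n\setminus F_n}\le \haar{K_nF_n\setminus F_n}=\haar{K_nF_n\Delta F_n}\le \haar{F_n}/n$. Because the left Haar measure is left invariant, $\haar{KF_n}\ge \haar{gF_n}=\haar{F_n}$ for any $g\in K$, and the identity $\haar{F_n\setminus KF_n}=\haar{KF_n\setminus F_n}+\haar{F_n}-\haar{KF_n}$ then yields $\haar{F_n\setminus KF_n}\le \haar{KF_n\setminus F_n}$; hence $\haar{KF_n\Delta F_n}\le 2\haar{KF_n\setminus F_n}\le 2\haar{F_n}/n\to 0$, so $\mathcal{F}$ is left F\o lner. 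For the pointwise bound, fix $x_k$; since $k_n=k$ for infinitely many $n$, along that subsequence $(F_n^*f)(x_k)\ge \average f(x_k)-1/n$, so $\limsup_n(F_n^*f)(x_k)\ge \average f(x_k)$, which combined with the reduction gives $\average_{\mathcal{F}}f(x_k)=\average f(x_k)$.

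The main obstacle is precisely that the supremum defining $\average f(x)$ is in general attained by genuinely different F\o lner sequences for different $x$, so a simultaneous choice must be engineered; the delicate point is ensuring that the interleaved sequence $(F_n)$, assembled from pieces of countably many different sequences, is itself left F\o lner. This is handled by forcing the F\o lner index $K_n$ to exhaust $G$ while the defect tends to $0$, together with the elementary inequality $\haar{KF_n}\ge \haar{F_n}$ that converts the one-sided control on $\haar{KF_n\setminus F_n}$ into control on the full symmetric difference.
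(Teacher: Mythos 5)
Your proof is correct, and it is self-contained where the paper's is citation-driven. The paper picks, for each pair $(x,k)\in N\times\mathbb{N}$, a left F\o lner sequence along which the averages at $x$ come within $\tfrac{1}{k}$ of $\average f(x)$, and then invokes \cite[Proposition 2.6]{fuhrmann2025continuity} to produce a single left F\o lner sequence having a common subsequence with each member of this countable family; the pointwise equality then drops out immediately. You instead use the attainment of the $\liminf$ in Proposition \ref{pro:averagesAdifferentApproaches} to get, for each point, one sequence that exactly attains $\average f(x_k)$, and then carry out the diagonalization by hand: you interleave blocks from the countably many sequences while forcing the F\o lner defect with respect to an exhausting sequence of compacta to tend to zero. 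Your verification that the interleaved sequence is left F\o lner --- in particular converting the one-sided bound on $\haar{KF_n\setminus F_n}$ into a bound on the full symmetric difference via $\haar{KF_n}\geq \haar{F_n}$ --- is precisely the content that the paper's citation encapsulates. So your argument buys independence from \cite{fuhrmann2025continuity} at the cost of length; both proofs share the same underlying diagonal idea.

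Two small blemishes, neither fatal. First, your parenthetical claim that each term $(A^{(k)}_m)^*f(x_k)$ is bounded above by $\average f(x_k)$, hence that the limit exists, is false in general: for $K$ a small compact neighbourhood of the identity, $K^*f(x)$ is close to $f(x)$, which may well exceed $\average f(x)$ (recall $\average f(x)=\inf_{K\in\hyperhaar(G)}\sup_{g\in G}(Kg)^*f(x)$ bounds suprema over translates, not individual terms). Fortunately you only use the consequence $(A^{(k)}_m)^*f(x_k)\geq \average f(x_k)-\epsilon$ for all large $m$, which follows directly from the attained $\liminf$, so the proof survives with that remark deleted. Second, $\bigcup_j K_j=G$ alone does not guarantee that every compact subset of $G$ lies in some $K_j$; you should fix an exhaustion satisfying $K_j\subseteq \operatorname{int}(K_{j+1})$, which exists in any locally compact $\sigma$-compact group, and then cover a given compact set by the interiors.
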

\begin{proof}
	Denote $I:=N\times \mathbb{N}$ and note that $I$ is countable. 
	For $i=(x,k)\in I$ we consider a left F\o lner sequence 
    $\mathcal{F}^{(i)}=(F_n^{(i)})$ that satisfies 
	\[\average f(x)\leq \lim_{n\to \infty} (F_n^{(i)})^* f(x)+\tfrac{1}{k}.\]
	Since $(\mathcal{F}^{(i)})_{i\in I}$ is a countable family of left F\o lner sequences, there exists a left F\o lner sequence $\mathcal{F}=(F_n)$ that has a common subsequence with each $\mathcal{F}^{(i)}$ \cite[Proposition 2.6]{fuhrmann2025continuity}. 
	For $x\in N$ and $k\in \mathbb{N}$, we have $i:=(x,k)\in I$ and observe 
    \begin{align*}
    \average f(x)
    &\leq \lim_{n\to \infty} (F_n^{(i)})^*f(x)+\tfrac{1}{k}
    \leq \limsup_{n\to \infty}F_n^* f(x)+\tfrac{1}{k}
    = \average_\mathcal{F} f(x)+\tfrac{1}{k}. 
    \end{align*}
    We thus have $\average f(x)=\average_\mathcal{F} f(x)$ for all $x\in N$. 
\end{proof}

For $f,h\in \bounded^+(X)$ and $\epsilon,\delta>0$, we say that $f$ is \emph{$\epsilon$-$\delta$-majorizing for $h$} if for all $x\in X$ with $f(x)\leq \delta$ we have $h(x)\leq \epsilon$. 

\begin{lemma}\label{lem:averagesMajorizingTechniques}
	Consider $\epsilon,\delta'>0$ and $f,h\in C^+(X)$. \\    
	Whenever $f$ is $\frac{\epsilon}{2}$-$\delta'$-majorizing for $h$, 
    then there exists $\delta>0$ such that 
\begin{itemize} 
\item[(i)] $\average f$ is $\epsilon$-$\delta$-majorizing for $\average h$, and
\item[(ii)] $\average_\mathcal{F} f$ is $\epsilon$-$\delta$-majorizing for $\average_\mathcal{F} h$ for all left (or right) F\o lner sequences $\mathcal{F}$.
\end{itemize}	
\end{lemma}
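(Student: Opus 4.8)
The plan is to reduce both statements to a single global pointwise majorization and then transport it through the averaging operators using their algebraic properties from Proposition \ref{pro:averageBasicProperties}. The key observation is that the $\frac{\epsilon}{2}$-$\delta'$-majorizing hypothesis, which is only a conditional statement, can be upgraded to an unconditional pointwise inequality of the form $h \leq \frac{\epsilon}{2} + C f$ valid on all of $X$, for a suitable constant $C>0$ depending only on $\epsilon$, $\delta'$ and $\|h\|_\infty$.

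First I would set $M:=\|h\|_\infty<\infty$ and put $C:=\max\{1,\,M/\delta'\}$, so that $C>0$ and $C\delta'\geq M$. I then claim that $h(x)\leq \frac{\epsilon}{2}+Cf(x)$ holds for every $x\in X$. Indeed, if $f(x)\leq \delta'$, the hypothesis gives $h(x)\leq \frac{\epsilon}{2}\leq \frac{\epsilon}{2}+Cf(x)$ (using $C,f\geq 0$); and if $f(x)>\delta'$, then $Cf(x)>C\delta'\geq M\geq h(x)$, so the inequality again holds. This is the only step with genuine content, and I expect it to be entirely elementary.

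Next I would apply $\average$ to this pointwise inequality. Since $\average$ is monotone, I obtain $\average h\leq \average(\frac{\epsilon}{2}+Cf)$; translation invariance together with positive homogeneity then yields $\average(\frac{\epsilon}{2}+Cf)=\frac{\epsilon}{2}+C\average f$. Hence $\average h\leq \frac{\epsilon}{2}+C\average f$ as functions in $\bounded(X)$. Now I set $\delta:=\frac{\epsilon}{2C}>0$. For any $x\in X$ with $(\average f)(x)\leq \delta$ I read off $(\average h)(x)\leq \frac{\epsilon}{2}+C\delta=\epsilon$, which is precisely the statement that $\average f$ is $\epsilon$-$\delta$-majorizing for $\average h$. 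This establishes (i).

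Finally, (ii) follows by the identical computation with $\average_\mathcal{F}$ in place of $\average$, since Proposition \ref{pro:averageBasicProperties} guarantees that $\average_\mathcal{F}$ is likewise monotone, positively homogeneous and translation invariant for every left (or right) F\o lner sequence $\mathcal{F}$. The point is that the constants $C$ and $\delta$ were chosen to depend only on $\epsilon$, $\delta'$ and $\|h\|_\infty$, and not on the particular operator; consequently a single $\delta$ serves simultaneously for (i) and for every $\mathcal{F}$ in (ii), exactly as the statement demands. There is no real obstacle to anticipate here: once the global majorization $h\leq \frac{\epsilon}{2}+Cf$ is in hand, the remainder is a mechanical application of the operator identities.
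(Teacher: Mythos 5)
Your proof is correct, and it takes a genuinely different (and somewhat more streamlined) route than the paper's. The paper works directly with the averaging integrals: after normalizing $h\leq 1$ via positive homogeneity it sets $\delta:=\epsilon\delta'/4$, views $\haar{(\cdot)\cap F_n}/\haar{F_n}$ as a probability measure $\mu$ on $G$, derives the Chebyshev-type bound $\mu(\{f_x>\delta'\})\leq \epsilon/2$ from $F_n^*f(x)\leq 2\delta$ (valid for all sufficiently large $n$), and then splits $\mu(h_x)$ over the level set $\{f_x>\delta'\}$ to get $F_n^*h(x)\leq \epsilon$; part (i) is then deduced from (ii) by taking suprema over F\o lner sequences. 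You instead linearize the hypothesis once and for all into the global pointwise bound $h\leq \frac{\epsilon}{2}+Cf$ with $C=\max\{1,\|h\|_\infty/\delta'\}$ (your case distinction is airtight, including the degenerate case $h\equiv 0$, where $C=1$), and then transport this through the operators using only monotonicity, translation invariance and positive homogeneity from Proposition \ref{pro:averageBasicProperties}. The two arguments rest on the same underlying estimate --- the paper's level-set splitting is the integrated form of your inequality, since $\mathbf{1}_{\{f_x>\delta'\}}\leq f_x/\delta'$ --- and both yield a $\delta$ of order $\epsilon\delta'/\|h\|_\infty$. What your version buys is the elimination of the auxiliary functions $f_x,h_x$, the measure $\mu$ and the ``for sufficiently large $n$'' limsup bookkeeping, and it proves (i) directly rather than reducing it to (ii), since Proposition \ref{pro:averageBasicProperties} grants the three needed properties to $\average$ itself as well as to every $\average_\mathcal{F}$. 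Your uniformity remark is also on point: since $C$ and $\delta=\epsilon/(2C)$ depend only on $\epsilon$, $\delta'$ and $\|h\|_\infty$, a single $\delta$ serves (i) and all left or right F\o lner sequences in (ii) simultaneously, exactly matching the quantifier structure of the statement.
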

\begin{proof}
    By the positive homogeneity of $\average_\mathcal{F}$ and $\average$ we assume w.l.o.g.\ that $h\leq 1$. 
    Observe that (i) is a straightforward consequence of (ii). 
    Denote $\delta:=\epsilon\delta'/4$. 
    For $x\in X$ we define positive functions $f_x,h_x\colon G\to \mathbb{R}$ by $f_x(g):=f(g.x)$ and $h_x(g):=h(g.x)$.

    For a left (or right) F\o lner sequence $\mathcal{F}=(F_n)$, we consider $x\in X$ with $\average_\mathcal{F}(f)(x)\leq \delta$. 
    For all sufficiently large $n$ we observe that $F_n^*f(x)\leq 2\delta$.
    Note that $\mu:=\haar{(\cdot)\cap F_n}/\haar{F_n}$ defines a probability measure on $G$.
    We have 
    $2\delta\geq F_n^*f(x)=\mu(f_x)\geq \int_{f_x>\delta'}\delta'd\mu$ and hence 
    $\int_{f_x>\delta'}1 d\mu\leq\epsilon/2$.
    Since $h_x\leq 1$ holds and since $f_x(g)\leq\delta'$ implies $h_x(g)\leq\epsilon/2$ for all $g\in G$ we have
    \begin{align*}
        F_n^*h(x)
        =	\mu(h_x) 
        \leq \int_{f_x\leq \delta'} \frac{\epsilon}{2} d\mu + \int_{f_x> \delta'} 1 d\mu  
        \leq \epsilon. 
    \end{align*}
    We observe $\average_\mathcal{F}(h)(x)=\limsup_{n\to \infty} F_n^*h(x)\leq \epsilon$. 
\end{proof}

The idea of the following lemma can be found in \cite[Remark 2.1]{garcia2019when}. We include the short proof for the convenience of the reader. 

\begin{lemma}
    \label{lem:averagesDensityEstimatesGeneral}
    Let $f\in C^+(X)$ and $\mathcal{F}$ be a left (or right) F\o lner sequence in $G$. Let $x\in X$ and $\epsilon>0$ and denote
    $G_{x,\epsilon}:= \{g\in G;\, f(g.x)>\epsilon\}.$
\begin{itemize}
    \item[(i)] Whenever $\average_\mathcal{F}f(x)\leq \epsilon^2$, then we have
    $\upperAdens_\mathcal{F}(G_{x,\epsilon})\leq \epsilon.$
    \item[(ii)] Whenever
    $\upperAdens_\mathcal{F}(G_{x,\epsilon})\leq \epsilon$, then
    we have 
    $\average_\mathcal{F}f(x)\leq (\|f\|_\infty+1)\epsilon$. 
    \item[(iii)] Whenever $\average f(x)\leq \epsilon^2$, then we have
    $\upperBdens(G_{x,\epsilon})\leq \epsilon.$
    \item[(iv)] Whenever
    $\upperBdens(G_{x,\epsilon})\leq \epsilon$, then
    we have 
    $\average f(x)\leq (\|f\|_\infty+1)\epsilon$. 
\end{itemize}
\end{lemma}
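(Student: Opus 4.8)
The plan is to treat (i) and (ii) as the two substantive estimates — both are Markov--Chebyshev type inequalities for the single positive function $f_x\colon G\to\mathbb{R}^+$, $f_x(g):=f(g.x)$ — and then to obtain (iii) and (iv) from them for free by passing to suprema over left F\o lner sequences. Throughout I would write $F_n^* f(x)=\haar{F_n}^{-1}\int_{F_n} f_x\dd g$ and recall that $G_{x,\epsilon}=\{g\in G;\, f_x(g)>\epsilon\}$. It is worth noting at the outset that the estimates in (i) and (ii) are purely measure-theoretic: they use nothing about the F\o lner property of $(F_n)$, only that $\haar{F_n}>0$, and are therefore insensitive to the left/right distinction.

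For (i) I would bound the average from below by the contribution of the superlevel set. Since $f_x>\epsilon$ on $G_{x,\epsilon}$ and $f_x\geq 0$ everywhere, we have $F_n^* f(x)\geq \epsilon\,\haar{F_n\cap G_{x,\epsilon}}/\haar{F_n}$ for every $n$. Dividing by $\epsilon$ and taking $\limsup_n$ gives $\upperAdens_\mathcal{F}(G_{x,\epsilon})\leq \average_\mathcal{F} f(x)/\epsilon$, which is at most $\epsilon$ under the hypothesis $\average_\mathcal{F} f(x)\leq \epsilon^2$.

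For (ii) I would instead split $F_n$ into $F_n\cap G_{x,\epsilon}$, where $f_x\leq \|f\|_\infty$, and its complement in $F_n$, where $f_x\leq \epsilon$. This yields $F_n^* f(x)\leq \|f\|_\infty\,\haar{F_n\cap G_{x,\epsilon}}/\haar{F_n}+\epsilon$. Taking $\limsup_n$, and letting the additive constant pass through the $\limsup$, gives $\average_\mathcal{F} f(x)\leq \|f\|_\infty\,\upperAdens_\mathcal{F}(G_{x,\epsilon})+\epsilon\leq (\|f\|_\infty+1)\epsilon$ under the hypothesis $\upperAdens_\mathcal{F}(G_{x,\epsilon})\leq \epsilon$.

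Finally, (iii) and (iv) follow from the definitions $\average f=\sup_\mathcal{F}\average_\mathcal{F} f$ and $\upperBdens(G_{x,\epsilon})=\sup_\mathcal{F}\upperAdens_\mathcal{F}(G_{x,\epsilon})$, both suprema being over left F\o lner sequences. For (iii), any left F\o lner $\mathcal{F}$ satisfies $\average_\mathcal{F} f(x)\leq \average f(x)\leq \epsilon^2$, so (i) gives $\upperAdens_\mathcal{F}(G_{x,\epsilon})\leq \epsilon$; taking the supremum over $\mathcal{F}$ yields $\upperBdens(G_{x,\epsilon})\leq \epsilon$. For (iv), the hypothesis forces $\upperAdens_\mathcal{F}(G_{x,\epsilon})\leq \upperBdens(G_{x,\epsilon})\leq \epsilon$ for every left F\o lner $\mathcal{F}$, so (ii) gives $\average_\mathcal{F} f(x)\leq (\|f\|_\infty+1)\epsilon$; taking the supremum over $\mathcal{F}$ yields the claim. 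There is no serious obstacle here: the only points requiring care are getting the direction of the Markov inequality right in each of (i) and (ii), handling the $\limsup$ of a sum correctly in (ii), and making sure the supremum reductions in (iii) and (iv) are taken over \emph{left} F\o lner sequences so as to match the definitions of $\average$ and $\upperBdens$.
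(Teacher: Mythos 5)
Your proposal is correct and follows essentially the same route as the paper: the same Markov--Chebyshev lower bound for (i), the same splitting of $F_n$ along $G_{x,\epsilon}$ for (ii), and the same supremum reduction for (iii) and (iv), which the paper leaves implicit as ``direct consequences'' and you merely spell out. The only detail the paper records that you omit is the observation that $G_{x,\epsilon}$ is open (by continuity of $f$ and of the action), hence measurable, so that $\upperAdens_\mathcal{F}$ and $\upperBdens$ are well-defined on it.
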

\begin{proof}
    Observe that $G_{x,\epsilon}$ is open, hence measurable. \\
    (i): We compute
    \begin{align*}
        \upperAdens_\mathcal{F}(G_{x,\epsilon})
        \leq \frac{1}{\epsilon} \limsup_{n\to \infty}\frac{1}{\haar{F_n}}\int_{F_n\cap G_{x,\epsilon}} f(g.x) \dd g
        \leq \frac{1}{\epsilon}\average_\mathcal{F}f(x)
        \leq \epsilon
    \end{align*}
    (ii): We observe 
    \begin{align*}
        \average_\mathcal{F}f(x)
        &\leq  
            \limsup_{n\to \infty}\frac{1}{\haar{F_n}}\int_{F_n\cap G_{x,\epsilon}} f(g.x) \dd g
            + \limsup_{n\to \infty}\frac{1}{\haar{F_n}}\int_{F_n\setminus G_{x,\epsilon}} f(g.x) \dd g\\
        &\leq \upperAdens_\mathcal{F}(G_{x,\epsilon})\|f\|_\infty+\epsilon\upperAdens_\mathcal{F}(G\setminus G_{x,\epsilon})
        \leq (\|f\|_\infty+1)\epsilon. 
\end{align*}
The statements of (iii) and (iv) are direct consequences of (i) and (ii).
\end{proof}

\section{The mean diameter}
\label{sec:meanDiameter}

Let $G$ be an amenable \lcsigma group. 
Consider an action $X$ of $G$ on a compact metric space. 
Let $d$ be a continuous metric on $X$, and note that $\diam_d \in C^+(\hyper(X))$. 
Considering the induced action of $G$ on $\hyper(X)$, the \emph{mean diameter (of $d$)} is given by 
$\Diam_d:=\average(\diam_d)$. 
Similarly, for a left (or right) F\o lner sequence $\mathcal{F}$, we consider the \emph{mean diameter (of $d$) w.r.t.\ $\mathcal{F}$}, which is defined by  
$\Diam_{\mathcal{F},d}:=\average_\mathcal{F}(\diam_d)$. 
If the metric is clear from the context, we simply write $\Diam$ and $\Diam_\mathcal{F}$.

\begin{remark}
\label{rem:invarianceOfDiam}
    Proposition \ref{pro:averageBasicProperties} implies that for any $A\in \hyper(X)$ and $g\in G$ we have $\Diam(g.A)=\Diam(A)$, as well as $\Diam_\mathcal{F}(g.A)=\Diam_\mathcal{F}(A)$ for any right F\o lner sequence $\mathcal{F}$.   
\end{remark}

The following result is a direct consequence of Proposition \ref{pro:averagesAdifferentApproaches}.

\begin{proposition}
\label{pro:descriptionDiam}
    For $A\in \hyper(X)$ we have 
    \begin{align*}
    \Diam(A)
    &= \sup_\mathcal{F}\Diam_\mathcal{F}(A)
    = \sup_\mathcal{F}\liminf_{n\to \infty}\frac{1}{\haar{F_n}}\int_{F_n}\diam(g.A) \dd g\\
    &=
    \inf_{K\in \hyperhaar(G)}\sup_{h\in G}\frac{1}{\haar{Kh}}\int_{Kh}\diam(g.A)\dd g,
    \end{align*}
    where the suprema, taken over all left F\o lner sequences $\mathcal{F}=(F_n)$, are attained.
    Furthermore, for every left F\o lner sequence $(F_n)$, we have  
    \[\sup_{h\in G}\frac{1}{\haar{F_nh}}\int_{F_nh}\diam(g.A)\dd g \to \Diam(A).\]
\end{proposition}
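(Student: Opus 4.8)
The plan is to recognize Proposition \ref{pro:descriptionDiam} as an immediate specialization of Proposition \ref{pro:averagesAdifferentApproaches}, applied to the continuous positive function $\diam_d$ on the compact metric space $\hyper(X)$, under the induced action of $G$ on the hyperspace. Since $\Diam := \average(\diam_d)$ and $\Diam_\mathcal{F} := \average_\mathcal{F}(\diam_d)$ by definition, every claimed formula should follow by substituting $f = \diam_d$ and evaluating the abstract averages at the point $A \in \hyper(X)$.

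Concretely, I would proceed as follows. First, I would verify that $\diam_d \in C^+(\hyper(X))$ and that the hyperspace $(\hyper(X), d_\hyper)$ is a compact metric space on which $G$ acts continuously, so that the framework of Section \ref{sec:averages} applies verbatim. The continuity of $\diam$ on $\hyper(X)$ is noted in the Preliminaries, and positivity is clear, so this step is routine. Then Proposition \ref{pro:averagesAdifferentApproaches} directly gives
\begin{align*}
\average(\diam_d)(A)
= \inf_{K\in \hyperhaar(G)}\sup_{g\in G} (Kg)^*\diam_d(A)
= \sup_{\mathcal{F}}\liminf_{n\to \infty} F_n^*\diam_d(A),
\end{align*}
with the supremum attained. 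The second step is simply to unwind the notation $(Kg)^*$ and $F_n^*$: by the definition of $K^*$ applied to the hyperspace action, $(Kg)^*\diam_d(A) = \frac{1}{\haar{Kg}}\int_{Kg}\diam(h.A)\dd h$, and since Haar measure is left-invariant, $\haar{Kg}=\haar{K}$, which recovers the infimum-supremum formula in the statement (after renaming the integration variable). Likewise $F_n^*\diam_d(A) = \frac{1}{\haar{F_n}}\int_{F_n}\diam(g.A)\dd g$, which gives both the $\liminf$ formula and, via $\Diam_\mathcal{F}(A) = \average_\mathcal{F}(\diam_d)(A) = \limsup_n F_n^*\diam_d(A)$, the middle equality $\Diam(A) = \max_\mathcal{F}\Diam_\mathcal{F}(A)$.

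The final pointwise-convergence claim, that $\sup_{h\in G}\frac{1}{\haar{F_n}}\int_{F_n h}\diam(g.A)\dd g \to \Diam(A)$ for every left Følner sequence, is exactly the last assertion of Proposition \ref{pro:averagesAdifferentApproaches} (that $\sup_{g\in G}(F_n g)^* f$ converges pointwise to $\average f$), again specialized to $f=\diam_d$ and evaluated at $A$, after the same unwinding of $(F_n g)^*$. Since Proposition \ref{pro:descriptionDiam} is explicitly flagged in the excerpt as ``a direct consequence of Proposition \ref{pro:averagesAdifferentApproaches}'', there is no genuine obstacle; the only care required is the bookkeeping in translating the abstract operators $K^*$, $\average$, $\average_\mathcal{F}$ into the integral expressions, and in checking that the left-invariance of Haar measure makes the normalization $\frac{1}{\haar{K}}$ (rather than $\frac{1}{\haar{Kh}}$) correct. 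The main (and mild) point to get right is this invariance bookkeeping; everything else is a direct citation.
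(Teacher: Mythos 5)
Your overall route is exactly the paper's: the paper offers no argument beyond declaring the statement ``a direct consequence of Proposition \ref{pro:averagesAdifferentApproaches}'', and your specialization of that proposition to $f=\diam_d\in C^+(\hyper(X))$ under the induced hyperspace action, evaluated at the point $A$, is precisely that deduction. However, the one step you single out as ``the main point to get right'' is justified incorrectly. You write that $\haar{Kg}=\haar{K}$ ``since Haar measure is left-invariant'', but left invariance gives $\haar{gK}=\haar{K}$; the set $Kg$ is a \emph{right} translate, and for a left Haar measure one has $\haar{Kg}=\Delta(g)\haar{K}$ with $\Delta$ the modular function. The paper's standing hypotheses (amenable \lcsigma group) do not include unimodularity --- the $ax+b$ group, for instance, is amenable, locally compact, $\sigma$-compact and non-unimodular --- so your identity fails in general, exactly where you need it to convert $(Kg)^*\diam_d(A)$ into $\frac{1}{\haar{K}}\int_{Kg}\diam(h.A)\dd h$.

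The correct bookkeeping bypasses right-invariance entirely. In the proof of Proposition \ref{pro:averagesAdifferentApproaches} the subadditivity machinery is applied to the \emph{unnormalized} function $H(K)=\int_K f_x(g)\dd g$ (here $f_x(g)=\diam(g.A)$), and Proposition \ref{pro:subadditivityGeneral} produces the quantity
\begin{align*}
\frac{\hat H(K)}{\haar{K}}=\sup_{h\in G}\frac{1}{\haar{K}}\int_{Kh}\diam(g.A)\dd g,
\end{align*}
in which the translate $Kh$ appears inside the integral while the normalization $\haar{K}$ is fixed; no comparison of $\haar{Kh}$ with $\haar{K}$ is ever made. This is why Proposition \ref{pro:descriptionDiam} is stated with $\frac{1}{\haar{K}}\int_{Kh}$ rather than $\frac{1}{\haar{Kh}}\int_{Kh}$, and the notation $\sup_{g}(Kg)^*f$ in Proposition \ref{pro:averagesAdifferentApproaches} should be read accordingly (or else as tacitly assuming unimodularity). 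So your proof is repaired by unwinding one more layer --- citing $\hat H(K)/\haar{K}$ from Proposition \ref{pro:subadditivityGeneral} together with Example \ref{exa:SpropertyForGfunctions} --- rather than by appealing to an invariance of Haar measure under right translations that the setting does not provide. Everything else in your proposal (continuity and positivity of $\diam_d$ on the compact metric space $\hyper(X)$, the identification $\Diam_\mathcal{F}(A)=\limsup_n F_n^*\diam_d(A)$, and the final pointwise-convergence claim) is correct and matches the paper.
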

\begin{remark}
    The F\o lner sequences attaining the suprema in Proposition \ref{pro:descriptionDiam} depend on $A$.
\end{remark}

\section{Regularity and diam-mean proximality}
\label{sec:regularityVSmeanDiamProximality}

For a factor map $\pi\colon X\to Y$, we call $x\in X$ an \emph{injectivity point} of $\pi$ whenever $\pi^{-1}(\pi(x))=\{x\}$. 
Note that the set of injectivity points is a $G_\delta$ set\footnote{A subset of a topological space is called \emph{$G_\delta$} if it is the countable intersection of open sets.} \cite[Lemma 1.1]{akin2001residual} and hence measurable. 
A factor map $\pi\colon X\to Y$ is called \emph{regular} if for all $\mu\in \mathcal{M}_G(X)$ the set of injectivity points $X_0$ satisfies $\mu(X_0)=1$.
From the ergodic representation we observe that 
$\pi$ is regular
if and only if 
$\mu(X_0)=1$ for all ergodic $\mu\in \mathcal{M}_G(X)$. See \cite[Lemma 6]{farrell1962representation} for details.

\begin{remark}
\label{rem:regularityViaY}
    Denote $Y_0:=\pi(X_0)$ and note that $\pi(X\setminus X_0)=Y\setminus \pi(X_0)$.
    Since closed maps map $F_\sigma$ sets\footnote{
    A subset of a topological space is called \emph{$F_\sigma$} if it is the
    complement of a $G_\delta$ set, i.e.\ the countable union of closed sets.
    } to $F_\sigma$ sets, it follows that $Y_0$ is $G_\delta$ and hence measurable. 

    Since we assume $G$ to be amenable, for ergodic $\nu\in \mathcal{M}_G(Y)$ there exists $\mu\in \mathcal{M}_G(X)$ with $\pi_*\mu=\nu$. 
    We thus observe that 
    $\pi$ is regular
    if and only if 
    $\nu(Y_0)=1$ for all ergodic $\nu\in \mathcal{M}_G(Y)$. 
    Another application of the ergodic representation yields that 
    $\pi$ is regular
    if and only if 
    $\nu(Y_0)=1$ for all $\nu\in \mathcal{M}_G(Y)$.
\end{remark}

Let $\pi\colon X\to Y$ be a factor map. 
We denote $\hyper_\pi(X)$ for the set of all $A\in \hyper(X)$ for which there exists $y\in Y$ with $A\subseteq \pi^{-1}(y)$. Observe that $\hyper_\pi(X)$ and the set of singletons $\single(X)$ are closed invariant (non-empty) subsets of $\hyper(X)$ and that $\single(X)\subseteq \hyper_\pi(X)$.
In the following we consider the induced action on $\hyper_\pi(X)$ and $\single(X)$. Note that $X$ is conjugated to $\single(X)$ via $x\mapsto \{x\}$. 

For $A\in \hyper_\pi(X)$ there exists a unique $y_A\in Y$ with $\pi(A)=\{y_A\}$. 
The mapping $\hyper_\pi(X)\to Y$ that assigns $A\mapsto y_A$ will be denoted by $\pi_\hyper$. It is straightforward to verify that $\pi_\hyper\colon \hyper_\pi(X)\to Y$ is a factor map. 

\begin{theorem}
    \label{the:regularFactorMapCharacterization}
    For a factor map $\pi\colon X\to Y$ the following statements are equivalent. 
    \begin{itemize}
        \item[(i)] $\pi$ is regular.
        \item[(ii)] $\pi$ is diam-mean proximal, i.e.\ for all $y\in Y$ we have $\Diam(\pi^{-1}(y))=0$. 
        \item[(ii')] For all $y\in Y$ and all $\epsilon>0$ we have 
        \[\upperBdens\left(\left\{g\in G;\, \diam(\pi^{-1}(g.y))>\epsilon\right\}\right)\leq \epsilon.\]
        \item[(iii)] There exists a left F\o lner sequence $\mathcal{F}$ such that $\pi$ is $\mathcal{F}$-diam-mean proximal, i.e.\ such that for all $y\in Y$ we have $\Diam_\mathcal{F}(\pi^{-1}(y))=0$.
        \item[(iii')] There exists a left F\o lner sequence $\mathcal{F}$ such that for all $y\in Y$ and all $\epsilon>0$ we have 
        \[\upperAdens_\mathcal{F}\left(\left\{g\in G;\, \diam(\pi^{-1}(g.y))>\epsilon\right\}\right)\leq \epsilon.\]
        \item[(iv)] $\pi_\hyper\colon \hyper_\pi(X)\to Y$ is regular. 
        \item[(v)] $\mathcal{M}_G(\hyper_\pi(X))=\mathcal{M}_G(\single(X))$. 
    \end{itemize}
\end{theorem}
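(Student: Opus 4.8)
The plan is to route every equivalence through two reformulations: a measure-theoretic one for (v), and the observation that the regularity of both $\pi$ and $\pi_\hyper$ is controlled by the single set $Y_0=\{y\in Y:\diam(\pi^{-1}(y))=0\}$ of singleton fibers. Concretely I would establish (ii)$\Leftrightarrow$(ii'), (iii)$\Leftrightarrow$(iii'), (i)$\Leftrightarrow$(iv), (i)$\Leftrightarrow$(v), and finally (v)$\Rightarrow$(ii)$\Rightarrow$(iii)$\Rightarrow$(v).

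First I would dispatch the primed equivalences. Applying Lemma \ref{lem:averagesDensityEstimatesGeneral} to $f=\diam\in C^+(\hyper(X))$ and the point $A=\pi^{-1}(y)$ of the hyperspace action, and using $g.\pi^{-1}(y)=\pi^{-1}(g.y)$ so that the relevant superlevel set is $\{g\in G:\diam(\pi^{-1}(g.y))>\epsilon\}$, parts (iii),(iv) of that lemma give (ii)$\Leftrightarrow$(ii') and parts (i),(ii) give (iii)$\Leftrightarrow$(iii'). For (i)$\Leftrightarrow$(iv) I would compute $\pi_\hyper^{-1}(y)=\hyper(\pi^{-1}(y))$, which is a single point exactly when $\pi^{-1}(y)$ is a singleton; hence the injectivity points of $\pi_\hyper$ are the fibers $\pi^{-1}(y)$ with $y\in Y_0$, and $\pi_\hyper$ maps them onto the same $Y_0$ recording the injectivity image of $\pi$. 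Remark \ref{rem:regularityViaY}, applied once to $\pi$ and once to $\pi_\hyper$, then phrases both regularity assertions as ``$\nu(Y_0)=1$ for all $\nu\in\mathcal{M}_G(Y)$'', giving (i)$\Leftrightarrow$(iv).

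Next I would treat (i)$\Leftrightarrow$(v). Since $\single(X)=\diam^{-1}(0)$ is closed and $\diam\geq 0$, condition (v) is equivalent to $\int\diam\,d\mu=0$ for every $\mu\in\mathcal{M}_G(\hyper_\pi)$. For (i)$\Rightarrow$(v), given such a $\mu$ I would push it down to $\nu=(\pi_\hyper)_*\mu$; regularity of $\pi$ gives $\nu(Y_0)=1$, and over $Y_0$ every $A\in\hyper_\pi(X)$ coincides with its singleton fiber, so $\mu$ is carried by $\single(X)$. For (v)$\Rightarrow$(i) I would argue contrapositively: if $\pi$ is not regular then $\nu(Y_0)<1$ for some $\nu\in\mathcal{M}_G(Y)$, and the fiber section $s\colon Y\to\hyper_\pi(X)$, $y\mapsto\pi^{-1}(y)$, which is Borel (the fibers of the closed map $\pi$ vary upper semicontinuously) and equivariant, produces $s_*\nu\in\mathcal{M}_G(\hyper_\pi)$ with $(s_*\nu)(\single(X))=\nu(Y_0)<1$, contradicting (v).

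Finally I would link (v) with (ii) and (iii). The implication (ii)$\Rightarrow$(iii) is immediate from $\Diam=\max_\mathcal{F}\Diam_\mathcal{F}\geq\Diam_\mathcal{F}\geq 0$. For (v)$\Rightarrow$(ii), I would fix $y$, choose by Proposition \ref{pro:descriptionDiam} a Følner sequence attaining $\Diam(\pi^{-1}(y))$, and pass to a subsequence along which the averages $F_n^*\diam(\pi^{-1}(y))$ both converge to $\Diam(\pi^{-1}(y))$ and witness that $\pi^{-1}(y)$ is generic for some $\mu\in\mathcal{M}_G(\hyper_\pi)$; then $\Diam(\pi^{-1}(y))=\int\diam\,d\mu=0$ by (v). The main obstacle is (iii)$\Rightarrow$(v): upgrading $\mathcal{F}$-diam-mean proximality for a single Følner sequence to the measure-theoretic triviality (v). Here I would take an ergodic $\mu\in\mathcal{M}_G(\hyper_\pi)$ and invoke \cite[Theorem 2.4]{fuhrmann2022structure} to extract from the given $\mathcal{F}$ a subsequence $\mathcal{F}'$ along which $\mu$-almost every $A$ is generic for $\mu$. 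The delicate point is that (iii) only controls the full fibers $\pi^{-1}(y)$, while $\mu$ may live on proper subsets of fibers; the bridge is the inclusion-monotonicity $\diam(g.A)\leq\diam(\pi^{-1}(g.y_A))$ for $A\subseteq\pi^{-1}(y_A)$, which upon averaging and taking limits along $\mathcal{F}'$ yields $\int\diam\,d\mu\leq\Diam_\mathcal{F}(\pi^{-1}(y_A))=0$; an ergodic decomposition then removes the ergodicity assumption and gives (v).
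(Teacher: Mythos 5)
Your proposal is correct, and it overlaps with the paper on most legs — (ii)$\Leftrightarrow$(ii') and (iii)$\Leftrightarrow$(iii') via Lemma \ref{lem:averagesDensityEstimatesGeneral}, (i)$\Leftrightarrow$(iv) via the identification of the injectivity points of $\pi_\hyper$ with $\single(X_0)=\pi_\hyper^{-1}(Y_0)$ and Remark \ref{rem:regularityViaY}, and (v)$\Rightarrow$(ii) via generic points along subsequences — but it closes the main loop by a genuinely different route. The paper proves (iii)$\Rightarrow$(i) directly: it passes to a tempered subsequence and applies the Lindenstrauss pointwise ergodic theorem on $Y$ to $\mathbf{1}_{A_\epsilon}$ with $A_\epsilon=\{y\in Y;\,\diam(\pi^{-1}(y))\geq\epsilon\}$, for an ergodic $\nu$ with $\nu(Y_0)=0$, producing a point $y$ with $\Diam_\mathcal{F}(\pi^{-1}(y))\geq\epsilon\,\nu(A_\epsilon)>0$. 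You instead route through (v) in both directions: (iii)$\Rightarrow$(v) via \cite[Theorem 2.4]{fuhrmann2022structure} applied to an ergodic measure on the hyperspace, the monotonicity $\diam(g.A)\leq\diam(\pi^{-1}(g.y_A))$, and an ergodic decomposition (available here via the Farrell-type representation the paper already invokes); and (v)$\Rightarrow$(i) via the equivariant fiber section $s\colon y\mapsto\pi^{-1}(y)$, pushing a bad invariant measure on $Y$ forward to an invariant measure on $\hyper_\pi(X)$ charging $\hyper_\pi(X)\setminus\single(X)$. Neither route is more elementary at bottom — the generic-points theorem you cite is itself a Lindenstrauss-type statement along subsequences — but yours makes (v) the hub of the whole theorem and produces an explicit invariant measure witnessing irregularity, a construction absent from the paper; the paper's route, in exchange, never needs measurability of the fiber map nor ergodic decomposition on the hyperspace. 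One step of yours needs tightening: Borel measurability of $s$ does not follow from upper semicontinuity alone. Upper semicontinuity (equivalently, closedness of $\pi$) only shows that $s^{-1}(\{A;\,A\subseteq U\})=\{y;\,\pi^{-1}(y)\subseteq U\}$ is open for open $U\subseteq X$; since the Borel $\sigma$-algebra of $(\hyper(X),d_\hyper)$ is generated by these sets together with the hit-sets $\{A;\,A\cap U\neq\emptyset\}$, you must also check $s^{-1}(\{A;\,A\cap U\neq\emptyset\})=\pi(U)$ is Borel — which it is, because an open $U$ in a compact metric space is $F_\sigma$ and $\pi$ is continuous, so $\pi(U)$ is $F_\sigma$. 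With that patch your section argument, and hence the whole proof, is complete.
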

\begin{remark}
\label{rem:densitySetsMeasurability}
    Since $G$ induces an action on $\hyper(X)$ and $\diam\in C^+(\hyper(X))$ we observe that for $A\in \hyper(X)$ the set
    $\{g\in G;\, \diam(g.A)>\epsilon\}$ 
    is open in $G$ and hence measurable. From $g.\pi^{-1}(y)=\pi^{-1}(g.y)$ it follows that the subsets of $G$ considered in (ii') and (iii') are measurable. 
\end{remark}
\begin{proof}[Proof of Theorem \ref{the:regularFactorMapCharacterization}:]     
    Denote $X_0$ for the set of injectivity points of $\pi$ and $Y_0:=\pi(X_0)$. 
    It is straightforward to verify that the injectivity points of $\pi_\hyper$ are given by
    $\single(X_0)=\pi_\hyper^{-1}(Y_0)$.
    With this observation it follows from Remark \ref{rem:regularityViaY} that (i) and (iv) are equivalent. 
    
(iv)$\Rightarrow$(v): 
    For $\mu\in \mathcal{M}_G(\hyper_\pi(X))$ we have that $\mu(\single(X_0))=1$ and hence $\mu(\single(X))=1$. This shows $\mu\in \mathcal{M}_G(\single(X))$. 
    
(v)$\Rightarrow$(ii): 
    Let $A\in \hyper_\pi(X)$ and consider a left F\o lner sequence $\mathcal{F}=(F_n)$.
    Choose a subsequence $\mathcal{F}'=(F_n')$ such that 
    $\Diam_\mathcal{F}(A)=\lim_{n\to \infty} (F_n')^*\diam (A)$. 
    By a standard Krylov-Bogolyubov argument and by possibly considering a further subsequence we assume w.l.o.g.\ that 
    $A$ is $\mathcal{F}'$-generic for some $\mu\in \mathcal{M}_G(\hyper_\pi(X))=\mathcal{M}_G(\single(X))$.
    Since $\diam\equiv 0$ on $\single(X)$ we thus have 
    \begin{align*}
        0
        =\mu(\diam)
        =\lim_{n\to \infty} ((F_n')_*\delta_A)(\diam)
        =\lim_{n\to \infty} (F_n')^*\diam(A)
        =\Diam_\mathcal{F}(A).
    \end{align*}
    This shows $\Diam_{\mathcal{F}}(A)=0$ for any left F\o lner sequence $\mathcal{F}$. 
    Taking the supremum over all left F\o lner sequences we observe 
    $\Diam(A)=\sup_{\mathcal{F}}\Diam_\mathcal{F}(A)=0$. 

(ii)$\Rightarrow$(iii): 
    Follows from $\Diam_\mathcal{F}\leq \Diam$ for any left F\o lner sequence $\mathcal{F}$. 

(iii)$\Rightarrow$(i): 
    Any left F\o lner sequence $\mathcal{F}$ allows for a tempered\footnote{
        A left F\o lner sequence $\mathcal{F}=(F_n)$ is called \emph{tempered} if
        $\sup_{n\in \mathbb{N}} {\haar{\bigcup_{k<n} F_k^{-1}F_n}}/{\haar{F_n}}<\infty.$
        We will not need this definition and only use that the pointwise ergodic theorem holds along $\mathcal{F}$. 
        See \cite{lindenstrauss2001pointwise} for more details. 
    } 
    subsequence $\mathcal{F}'$ \cite[Proposition 1.4]{lindenstrauss2001pointwise}.
    Since $\Diam_{\mathcal{F}'}\leq \Diam_\mathcal{F}$ holds we assume w.l.o.g.\ that $\mathcal{F}$ is tempered. 

    If $\pi$ is not regular, then there exist an ergodic $\nu\in \mathcal{M}_G(Y)$ with $\nu(Y_0)<1$. Since $Y_0$ is invariant, we observe $\nu(Y_0)=0$, i.e.\ $\nu(Y\setminus Y_0)=1$. 
    Denote $A_\epsilon:=\{y\in Y;\, \diam(\pi^{-1}(y))\geq \epsilon\}$ for $\epsilon>0$. 
    From $Y\setminus Y_0= \bigcap_{n\in \mathbb{N}}A_{1/n}$ we observe that there exists $\epsilon>0$ with $\nu(A_\epsilon)>0$. 

    By the Lindenstrauss ergodic theorem \cite[Theorem 1.2]{lindenstrauss2001pointwise} there exists $y\in Y$ such that 
    ${\haar{F_n}}^{-1}\int_{F_n} \mathbf{1}_{A_\epsilon}(g.y)\dd g \to \nu(A_\epsilon)$. 
    Denoting 
    $F_n':=\{g\in F_n;\, g.y\in A_\epsilon\}$
    we observe
    \begin{align*}
        \Diam_\mathcal{F}(\pi^{-1}(y))
        &= \limsup_{n\to \infty} \frac{1}{\haar{F_n}}\int_{F_n} \diam(g.\pi^{-1}(y))\dd g\\
        &\geq \limsup_{n\to \infty} \frac{1}{\haar{F_n}}\int_{F_n'} \diam(\pi^{-1}(g.y))\dd g\\
        &\geq \lim_{n\to \infty} \frac{1}{\haar{F_n}}\int_{F_n} \epsilon \mathbf{1}_{A_\epsilon}(g.y)\dd g
        =\epsilon\nu(A_\epsilon)>0.
    \end{align*}
    
(ii)$\Rightarrow$(ii'): 
    For $y\in Y$ and $\epsilon>0$ consider $f:=\diam\in C(\hyper(X))$ and $A:=\pi^{-1}(y) \in \hyper(X)$. We have 
    $\average f(A)=\Diam(\pi^{-1}(y))=0\leq \epsilon^2$ and Lemma~\ref{lem:averagesDensityEstimatesGeneral}(iii) yields 
    \[
    \upperBdens(\{g\in G;\, \diam(\pi^{-1}(g.x))>\epsilon\})
    =\upperBdens(\{g\in G;\, f(g.A)>\epsilon\})
    \leq \epsilon.\]

(ii')$\Rightarrow$(ii): 
    Consider $y\in Y$ and $\epsilon>0$. 
    With $f:=\diam$ and $A:=\pi^{-1}(y)$ we have 
    $
    \upperBdens(\{g\in G;\, f(g.A)>\epsilon\})
    \leq \epsilon
    $
    and Lemma~\ref{lem:averagesDensityEstimatesGeneral}(iv) yields 
    \[
    \Diam(\pi^{-1}(y))=\average f(A)\leq (\|f\|_\infty+1)\epsilon=(\diam(X)+1)\epsilon. 
    \]
    Since $\epsilon>0$ is arbitrary we observe $\Diam(\pi^{-1}(y))=0$.

(iii)$\Leftrightarrow$(iii'): 
    Follows with a similar argument from Lemma \ref{lem:averagesDensityEstimatesGeneral}. 
\end{proof}

\begin{remark}
    Theorem \ref{the:regularFactorMapCharacterization} also shows that the notion of diam-mean proximality of a factor map $\pi\colon X\to Y$ is independent of the choice of a metric on $X$. A similar statement holds about $\mathcal{F}$-diam-mean proximality for any left F\o lner sequence $\mathcal{F}$. 
    Note that this can also be seen directly from Lemma \ref{lem:averagesMajorizingTechniques}.
\end{remark}

\begin{remark}
    A closely related (but not equivalent) property of factor maps is \emph{Banach proximality}. It was introduced in \cite{li2015mean} and shown to be equivalent to topo-isomorphy in \cite{qiu2020note, fuhrmann2022structure, hauser2024mean}. 
    See Subsection \ref{subsec:meanEquicontinuity} for more details on these notions. 
\end{remark}

\section{Diam-mean equicontinuity}
\label{sec:diamMeanEquicontinuity}

Let $X$ be an action (of $G$) and $d$ be a continuous metric on $X$. 
We say that $x\in X$ is \emph{diam-mean equicontinuous (w.r.t.\ $d$)} if for each $\epsilon>0$ there exists a closed neighbourhood $A$ of $x$ such that 
$\Diam_d(A)<\epsilon$. The following proposition shows that diam-mean equicontinuity is independent of the choice of $d$. We will thus simply speak of diam-mean equicontinuous points. 

\begin{proposition}
\label{pro:FdiamMeanEquicontinuousPointGeneratorIndependence}
    Let $X$ be an action and $d$ be a continuous metric on $X$. 
    A point $x\in X$ is diam-mean equicontinuous w.r.t.\ $d$ if and only if it is diam-mean equicontinuous w.r.t.\ any continuous metric on $X$. 
\end{proposition}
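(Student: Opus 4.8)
The plan is to isolate the single metric-dependent ingredient of the definition and then transport control from one metric to another via the majorizing technique of Lemma~\ref{lem:averagesMajorizingTechniques}. First I would record that the choice of continuous metric is immaterial at the level of topology: since $X$ is compact, any continuous metric $d$ induces the original topology of $X$. Indeed, every $d$-ball is open because $d$ is continuous, so the identity from $X$ onto the metric space $(X,d)$ is a continuous bijection from a compact space onto a Hausdorff space, hence a homeomorphism. Consequently a closed neighbourhood of $x$ means the same thing for all continuous metrics, and the hyperspace $\hyper(X)$ carries a single topology for which, for any two continuous metrics $d,d'$, both $\diam_d$ and $\diam_{d'}$ lie in $C^+(\hyper(X))$. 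Hence the metric enters the definition of diam-mean equicontinuity only through $\Diam_d=\average(\diam_d)$, and it suffices to show that control of $\Diam_d$ forces control of $\Diam_{d'}$; by symmetry this yields the asserted equivalence.

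The structural fact driving the argument is that $\diam_d$ and $\diam_{d'}$ share the same zero set, namely the singletons $\single(X)$, because a nonempty compact set has zero diameter in a metric exactly when it is a single point. From this I would extract a uniform comparison: for every $\eta>0$ there is $\delta'>0$ such that $\diam_d$ is $\eta$-$\delta'$-majorizing for $\diam_{d'}$, i.e.\ $\diam_d(A)\le\delta'$ forces $\diam_{d'}(A)\le\eta$. This is a compactness argument: were it to fail for some $\eta$, I could choose $A_n\in\hyper(X)$ with $\diam_d(A_n)\le 1/n$ but $\diam_{d'}(A_n)>\eta$; passing to a convergent subsequence $A_n\to A$ by compactness of $\hyper(X)$ and using continuity of both diameter functions, I would obtain $\diam_d(A)=0$ yet $\diam_{d'}(A)\ge\eta>0$, contradicting that $\diam_d(A)=0$ forces $A\in\single(X)$ and hence $\diam_{d'}(A)=0$.

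Finally I would feed this into Lemma~\ref{lem:averagesMajorizingTechniques}(i) with $f=\diam_d$ and $h=\diam_{d'}$. Given $\epsilon>0$, the previous paragraph applied with $\eta=\epsilon/4$ makes $\diam_d$ be $\tfrac{\epsilon}{4}$-$\delta'$-majorizing for $\diam_{d'}$ for some $\delta'>0$, so the lemma produces $\delta>0$ such that $\Diam_d=\average(\diam_d)$ is $\tfrac{\epsilon}{2}$-$\delta$-majorizing for $\Diam_{d'}=\average(\diam_{d'})$; in particular $\Diam_d(A)\le\delta$ implies $\Diam_{d'}(A)\le\epsilon/2$. If $x$ is diam-mean equicontinuous w.r.t.\ $d$, then diam-mean equicontinuity supplies a closed neighbourhood $A$ of $x$ with $\Diam_d(A)<\delta$, whence $\Diam_{d'}(A)\le\epsilon/2<\epsilon$, the strict inequality demanded by the definition. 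Exchanging the roles of $d$ and $d'$ gives the converse, so the two notions coincide. The one genuine subtlety is the passage from a uniform comparison of \emph{diameters} to a uniform comparison of \emph{mean diameters}; this is precisely the content of Lemma~\ref{lem:averagesMajorizingTechniques}, so the main concrete step is the compactness argument of the previous paragraph.
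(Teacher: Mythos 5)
Your proposal is correct and takes essentially the same approach as the paper: both reduce the claim to the fact that $\diam_d$ is $\tfrac{\epsilon}{2}$-$\delta'$-majorizing for $\diam_{d'}$ and then invoke Lemma~\ref{lem:averagesMajorizingTechniques} to transfer this control to $\Diam_d$ and $\Diam_{d'}$. The only cosmetic difference is how the majorizing input is obtained --- the paper deduces it from the uniform equivalence of the two continuous metrics on the compact space $X$, whereas you prove it by a sequential-compactness contradiction in $\hyper(X)$ using that $\diam_d$ and $\diam_{d'}$ share the zero set $\single(X)$; both are equivalent one-step compactness arguments.
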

\begin{proof}
    Assume that $x$ is diam-mean equicontinuous w.r.t.\ $d$ and let $d'$ be another continuous metric on $X$. 
    For $\epsilon>0$ there exists $\delta'>0$ such that $d$ is  $\frac{\epsilon}{2}$-$\delta'$-majorizing for $d'$. 
    We thus observe that $\diam_{d}$ is $\frac{\epsilon}{2}$-$\delta'$-majorizing for $\diam_{d'}$. 
    From Lemma \ref{lem:averagesMajorizingTechniques} it follows that there exists $\delta>0$ such that $\Diam_{d}$ is $\epsilon$-$\delta$-majorizing for $\Diam_{d'}$. 
    Since $x$ is a diam-mean equicontinuous point w.r.t.\ $d$ there exists a compact neighbourhood $A$ of $x$ such that $\Diam_d(A)\leq \delta$.
    For such $A$ we observe $\Diam_{d'}(A)\leq \epsilon$.    
\end{proof}

An action $X$ is called \emph{diam-mean equicontinuous} if all points $x\in X$ are diam-mean equicontinuous. 
Similarly we define the notion of $\mathcal{F}$-diam-mean equicontinuity (for points and actions) for any left (or right) F\o lner sequence $\mathcal{F}$, by replacing $\Diam_d$ with $\Diam_{\mathcal{F},d}$.
Note that a similar argument shows the independence from the choice of a metric also in this context and that any diam-mean equicontinuous point is $\mathcal{F}$-diam-mean equicontinuous.

\begin{remark}
\label{rem:diamMeanEquicontinuityGlobalization}
    It is straightforward to show that an action $X$ is diam-mean equicontinuous if and only if for all $\epsilon>0$ there exists $\delta>0$ such that for all $x\in X$ we have $\Diam(\ball_\delta(x))\leq \epsilon$. A similar statement holds for $\mathcal{F}$-diam-mean equicontinuity for any left (or right) F\o lner sequence $\mathcal{F}$. 
\end{remark}

\begin{remark}
    Any equicontinuous action is diam-mean equicontinuous.
\end{remark}

\begin{remark}
    \label{rem:transitivityDiamMEanEquicontinuous}
    From the invariance of $\Diam$ (Remark \ref{rem:invarianceOfDiam}) it is straightforward to observe that any $x\in X$ for which there exists a diam-mean equicontinuous point (w.r.t.\ $X$) in $\overline{O(x)}$ is diam-mean equicontinuous (w.r.t.\ $X$). 
    In particular, we observe the following. 
    \begin{itemize}
        \item[(i)] The set of diam-mean equicontinuous points is invariant. 
        \item[(ii)] Whenever $X$ contains a diam-mean equicontinuous point, then any transitive point in $X$ is diam-mean equicontinuous. 
        \item[(iii)] A minimal action is diam-mean equicontinuous if and only if it allows for a diam-mean equicontinuous point. 
    \end{itemize}
    A similar statement holds for $\mathcal{F}$-diam-mean equicontinuity whenever $\mathcal{F}$ is a right F\o lner sequence. 
\end{remark}

\begin{remark}
\label{rem:FdiamMeanEquiVSDiamMeanEqui}
    It was shown in \cite{garcia2021mean,  haupt2025multivariate} that a minimal action of an Abelian group is diam-mean equicontinuous if and only if it allows for a $\mathcal{F}$-diam-mean equicontinuous point for some F\o lner sequence $\mathcal{F}$. For details see \cite[Theorem 4.12]{garcia2021mean} and \cite[Theorem 5.6]{haupt2025multivariate} (with $m=2$).  
\end{remark}

\begin{remark}
    It remains open whether the $\mathcal{F}$-diam-mean equicontinuity for some F\o lner sequence implies diam-mean equicontinuity in the context of actions of amenable \lcsigma groups. See \cite{haupt2025multivariate, haupt2025note} for some advances in this direction. 
\end{remark}

\begin{remark}
    Diam-mean equicontinuity is also called \emph{Banach diam-mean equicontinuity} \cite{garcia2021mean}. 
    Furthermore, our terminology differs from that of \cite{garcia2021mean}, where minimal actions of $\mathbb{Z}$ (actually actions of the semigroup $\mathbb{N}$) were studied. 
    For the standard F\o lner sequence $\mathcal{F}=(F_n)$ with $F_n=\{0,\dots, n-1\}$ our concept of $\mathcal{F}$-diam-mean equicontinuity is simply called ``diam-mean equicontinuity'' in \cite{garcia2021mean}.   
\end{remark}

In the context of actions of $\mathbb{Z}$ the following characterization was presented in \cite[Lemma 4.4]{garcia2021mean} and \cite[Lemma 4.7]{garcia2021mean}. 
It is a straightforward consequence of Lemma \ref{lem:averagesDensityEstimatesGeneral} and we will need it for the proof of Theorem \ref{the:INTROcharacterizationDMEvsRegular}. 

\begin{proposition}
    \label{pro:characterizationDiamMeanEquicontinuityViaDensities}
    Let $X$ be an action. 
    \begin{itemize}
        \item[(i)] A point $x\in X$ is diam-mean equicontinuous if and only if for any $\epsilon>0$ there exists a closed neighbourhood $A$ of $x$ such that 
        \[\upperBdens(\{g\in G;\, \diam(g.A)>\epsilon\})\leq \epsilon.\]
        \item[(ii)] Let $\mathcal{F}$ be a left (or right) F\o lner sequence in $G$.  
        A point $x\in X$ is $\mathcal{F}$-diam-mean equicontinuous if and only if for any $\epsilon>0$ there exists a closed neighbourhood $A$ of $x$ such that 
        \[\upperAdens_\mathcal{F}(\{g\in G;\, \diam(g.A)>\epsilon\})\leq \epsilon.\]
    \end{itemize}
\end{proposition}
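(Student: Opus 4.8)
The plan is to reduce everything to Lemma \ref{lem:averagesDensityEstimatesGeneral} applied on the hyperspace. The key observation is that the mean diameter is literally an average of a function on $\hyper(X)$: taking $f:=\diam\in C^+(\hyper(X))$ and considering points $A\in \hyper(X)$ (in the role of $x$ in that lemma), we have $\average(\diam)(A)=\Diam(A)$ and $\average_\mathcal{F}(\diam)(A)=\Diam_\mathcal{F}(A)$, while the relevant sublevel set becomes $G_{A,\epsilon}=\{g\in G;\, \diam(g.A)>\epsilon\}$, which is exactly the set appearing in the statement. Note that $\diam$ is bounded on the compact metric space $\hyper(X)$, so $\|\diam\|_\infty<\infty$; set $c:=\|\diam\|_\infty+1$. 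With this dictionary in place, both equivalences follow by choosing the parameters in the lemma appropriately.

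For the forward implication of (i), assume $x$ is diam-mean equicontinuous and fix $\epsilon>0$. Applying the definition with threshold $\epsilon^2>0$ yields a closed neighbourhood $A$ of $x$ with $\Diam(A)<\epsilon^2$, i.e.\ $\average(\diam)(A)\leq \epsilon^2$. Part (iii) of Lemma \ref{lem:averagesDensityEstimatesGeneral} then gives $\upperBdens(G_{A,\epsilon})\leq \epsilon$, which is the asserted density bound. Conversely, to verify diam-mean equicontinuity it suffices to produce, for each target $\epsilon'>0$, a closed neighbourhood $A$ of $x$ with $\Diam(A)<\epsilon'$. I would choose $\epsilon>0$ small enough that $c\epsilon<\epsilon'$, invoke the hypothesis to obtain a closed neighbourhood $A$ with $\upperBdens(G_{A,\epsilon})\leq \epsilon$, and then apply part (iv) of the lemma to conclude $\Diam(A)=\average(\diam)(A)\leq c\epsilon<\epsilon'$.

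Part (ii) is proved by the identical argument, replacing $\average$, $\Diam$ and $\upperBdens$ by their $\mathcal{F}$-counterparts $\average_\mathcal{F}$, $\Diam_\mathcal{F}$ and $\upperAdens_\mathcal{F}$, and using parts (i) and (ii) of Lemma \ref{lem:averagesDensityEstimatesGeneral} in place of (iii) and (iv). The lemma is stated for left (or right) F\o lner sequences, so this covers both cases directly.

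There is essentially no real obstacle here: once one recognizes $\Diam$ as the average of $\diam$ on $\hyper(X)$, the statement is pure bookkeeping. The only point requiring a little care is matching the quantifiers—using the threshold $\epsilon^2$ in the forward direction and absorbing the constant $c=\|\diam\|_\infty+1$ by shrinking $\epsilon$ in the converse—so that the single parameter $\epsilon$ appearing simultaneously in the density and in the diameter threshold of the proposition lines up with the asymmetric estimates of the lemma.
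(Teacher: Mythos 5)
Your proof is correct and follows exactly the route the paper intends: the paper gives no detailed argument, stating only that the proposition ``is a straightforward consequence of Lemma \ref{lem:averagesDensityEstimatesGeneral},'' and your write-up supplies precisely that reduction, viewing $\Diam=\average(\diam)$ on $\hyper(X)$ and matching the $\epsilon^2$ versus $(\|\diam\|_\infty+1)\epsilon$ asymmetry of the lemma by the standard quantifier bookkeeping. Nothing to add.
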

 
In order to show Theorem \ref{the:INTROcharacterizationDMEvsRegular} we will also need the following characterization. 

\begin{proposition}
\label{pro:characterizationDiamMeanEquicontinuityViaAlFolner}
    Let $X$ be an action. 
    \begin{itemize}
        \item[(i)] A point $x\in X$ is diam-mean equicontinuous if and only if $x$ is $\mathcal{F}$-diam-mean equicontinuous w.r.t.\ all left F\o lner sequences $\mathcal{F}$. 
        \item[(ii)]  $X$ is diam-mean equicontinuous if and only if $X$ is $\mathcal{F}$-diam-mean equicontinuous w.r.t.\ all left F\o lner sequences $\mathcal{F}$. 
    \end{itemize}
\end{proposition}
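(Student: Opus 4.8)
The plan is to reduce everything to part (i), and within (i) to isolate a single left F\o lner sequence that simultaneously computes the unrestricted mean diameter on a countable neighbourhood base of $x$.

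The forward implications are immediate. For (i), if $x$ is diam-mean equicontinuous, then for each $\epsilon>0$ there is a closed neighbourhood $A$ of $x$ with $\Diam(A)<\epsilon$; since $\Diam_\mathcal{F}=\average_\mathcal{F}(\diam)\leq \average(\diam)=\Diam$ for every left F\o lner sequence $\mathcal{F}$, the same $A$ witnesses $\mathcal{F}$-diam-mean equicontinuity for all $\mathcal{F}$. Part (ii) then reduces to (i) pointwise: both notions of equicontinuity of $X$ are defined as the corresponding property holding at every point, so $X$ is diam-mean equicontinuous (resp.\ $\mathcal{F}$-diam-mean equicontinuous for all $\mathcal{F}$) if and only if every point is, and (i) identifies these two pointwise conditions.

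The substantive part is the converse of (i). Assume $x$ is $\mathcal{F}$-diam-mean equicontinuous for all left F\o lner sequences $\mathcal{F}$. The difficulty is an exchange of quantifiers: the neighbourhood witnessing $\mathcal{F}$-diam-mean equicontinuity may depend on $\mathcal{F}$, whereas diam-mean equicontinuity requires controlling $\Diam(A)=\sup_\mathcal{F}\Diam_\mathcal{F}(A)$ for a \emph{single} neighbourhood $A$. To overcome this I would apply Proposition \ref{pro:averagesAandAFPickFolner} to the hyperspace action $\hyper(X)$, with the function $\diam\in C^+(\hyper(X))$ and the countable family $N:=\{\ball_{1/n}(x);\, n\in \mathbb{N}\}\subseteq \hyper(X)$ (each $\ball_{1/n}(x)$ is a non-empty compact subset of $X$). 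This produces one left F\o lner sequence $\mathcal{F}$ with $\Diam(\ball_{1/n}(x))=\Diam_\mathcal{F}(\ball_{1/n}(x))$ for all $n$.

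With this $\mathcal{F}$ fixed, I would take an arbitrary $\epsilon>0$ and invoke the hypothesis for this particular $\mathcal{F}$: there is a closed neighbourhood $A$ of $x$ with $\Diam_\mathcal{F}(A)<\epsilon$. Choosing $n$ large enough that $\ball_{1/n}(x)\subseteq A$ and using the monotonicity of $\Diam_\mathcal{F}$ under set inclusion---which is immediate from $\diam(g.A')\leq \diam(g.A'')$ whenever $A'\subseteq A''$, together with monotonicity of the integral and $\limsup$---I obtain $\Diam(\ball_{1/n}(x))=\Diam_\mathcal{F}(\ball_{1/n}(x))\leq \Diam_\mathcal{F}(A)<\epsilon$. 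Since $\ball_{1/n}(x)$ is itself a closed neighbourhood of $x$, this establishes diam-mean equicontinuity at $x$ and completes (i); as noted, (ii) follows. The main obstacle is exactly the quantifier exchange, and the device resolving it is that Proposition \ref{pro:averagesAandAFPickFolner} furnishes a common F\o lner sequence for the entire countable neighbourhood base, after which monotonicity does the rest.
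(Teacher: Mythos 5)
Your proposal is correct and follows essentially the same route as the paper: both apply Proposition \ref{pro:averagesAandAFPickFolner} to $\diam\in C^+(\hyper(X))$ on the countable family $\{\ball_{1/k}(x)\}_{k\in\mathbb{N}}$ to extract one left F\o lner sequence $\mathcal{F}$ along which $\Diam=\Diam_\mathcal{F}$ on this neighbourhood base, then invoke the hypothesis for that single $\mathcal{F}$. The only difference is cosmetic: you make explicit the monotonicity step $\ball_{1/n}(x)\subseteq A \Rightarrow \Diam_\mathcal{F}(\ball_{1/n}(x))\leq \Diam_\mathcal{F}(A)$, which the paper leaves implicit.
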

\begin{proof}
    Observe that (ii) is a direct consequence of (i).     
    To show (i) let $\epsilon>0$ and note that $\{\ball_{1/k}(x);\, k\in \mathbb{N}\}$ is a countable family in $\hyper(X)$. Thus, Proposition \ref{pro:averagesAandAFPickFolner} yields the existence of a left F\o lner sequence $\mathcal{F}$ with 
    \begin{align*}
    \Diam\left(\ball_{\frac{1}{k}}(x)\right)
    =\average(\diam)\left(\ball_{\frac{1}{k}}(x)\right)
    =\average_\mathcal{F}(\diam)\left(\ball_{\frac{1}{k}}(x)\right)
    =\Diam_{\mathcal{F}}\left(\ball_{\frac{1}{k}}(x)\right)
    \end{align*}
    for all $k\in \mathbb{N}$. 
    Since $x$ is assumed to be $\mathcal{F}$-diam-mean equicontinuous, there exists $k\in \mathbb{N}$ such that 
    $\Diam(\ball_{1/k}(x))=\Diam_\mathcal{F}(\ball_{1/k}(x))\leq \epsilon$, showing $x$ to be diam-mean equicontinuous. 
\end{proof}

The following example illustrates that $\mathcal{F}$-diam-mean equicontinuity of points depends on the choice of a left F\o lner sequence. 

\begin{example}
    Consider $G:=\mathbb{Z}$. 
    Let $X=\mathbb{Z}\cup \{-\infty,\infty\}$ be the two-point compactification. We act on $X$ with $G$ by $g.x:=g+x$ for $g\in G$ and $x\in \mathbb{Z}$ and by fixing $+\infty$ and $-\infty$. 
    Consider the F\o lner sequences $\mathcal{F}$ and $\mathcal{F}'$ given by
    $F_n:=\{0,\dots,n\}$ and $F_n':=\{-n,\dots,0\}$. 
    It is straightforward to observe that $\infty$ is $\mathcal{F}$-diam-mean equicontinuous, but not $\mathcal{F}'$-diam-mean equicontinuous. 
    We also observe that $\infty$ is not diam-mean equicontinuous, while it is $\mathcal{F}$-diam-mean equicontinuous.  
\end{example}

\section{Regularity and diam-mean equicontinuity}
\label{sec:regularityVSdiamMeanEquicontinuity}

In this section, we will prove Theorem \ref{the:INTROcharacterizationDMEvsRegular}. To do so, we begin by recalling some details from the theory of mean equicontinuous actions. See \cite{fuhrmann2022structure} and the references therein for more details on mean equicontinuity.

\subsection{Mean equicontinuity}
\label{subsec:meanEquicontinuity}

Let $X$ be an action, and let $d$ be a continuous metric on $X$. 
We consider the induced action of $G$ on $X^2$. 
The \emph{Weyl pseudo-metric} $D$ is defined by $D:=\average(d)$. 
An action is called \emph{mean equicontinuous} whenever for all $x\in X$ and $\epsilon>0$ there exists $\delta>0$ such that for all $x'\in \ball_\delta(x)$ we have $D(x,x')\leq \epsilon$. Similarly, we define the \emph{$\mathcal{F}$-Besicovitch pseudo-metric} $D_\mathcal{F}:=\average_\mathcal{F}(d)$ and the concept of \emph{$\mathcal{F}$-mean equicontinuity} for any left F\o lner sequence.
Clearly, any diam-mean equicontinuous action is mean equicontinuous. 
Furthermore, any $\mathcal{F}$-diam-mean equicontinuous action is $\mathcal{F}$-mean equicontinuous.
With an argument similar to that in the proof of Proposition \ref{pro:descriptionDiam} we observe the following. See \cite[Section 3.3]{lacka2018quasi} for the statement in the context of countable amenable groups. 

\begin{remark}
\label{rem:descriptionD}
    For $(x,x')\in X^2$ we have 
    \begin{align*}
    D(x,x')
    &= 
    \sup_\mathcal{F}D_\mathcal{F}(x,x')
    =
    \sup_\mathcal{F}\liminf_{n\to \infty}\frac{1}{\haar{F_n}}\int_{F_n}d(g.x,g.x') \dd g
    \\
    &=
    \inf_{K\in \hyperhaar(G)}\sup_{h\in G}\frac{1}{\haar{Kh}}\int_{Kh}d(g.x,g.x')\dd g,
    \end{align*}
    where the suprema, taken over all left F\o lner sequences $\mathcal{F}=(F_n)$, are attained.
    Furthermore, for every left F\o lner sequence $(F_n)$, we have 
    \[
        \sup_{h\in G}\frac{1}{\haar{F_nh}}\int_{F_nh}d(g.x,g.x')\dd g \to D(x,x').
    \]
\end{remark}

A factor map $\pi\colon X\to Y$ is called \emph{Banach proximal} if for all $y\in Y$ and $x,x'\in \pi^{-1}(y)$ we have $D(x,x')=0$ \cite{li2015mean, hauser2024mean}. 
It follows from Theorem~\ref{the:regularFactorMapCharacterization} that any regular factor map is Banach proximal. 
\begin{remark}
    A factor map $\pi\colon X\to Y$ is called \emph{topo-isomorphic} if for each $\mu\in \mathcal{M}_G(X)$ the map $\pi$ induces a 
    measure-theoretic isomorphism between $(X,\mu)$ and $(Y,\pi_*\mu)$.
    It was shown\footnote{
    Although \cite{hauser2024mean} presents the argument only for countable amenable groups, it extends straightforwardly to the context of amenable \lcsigma groups. 
    } 
    in \cite[Theorem 8.1]{hauser2024mean} that a factor map $\pi\colon X\to Y$ is Banach proximal if and only if it is topo-isomorphic.
    See \cite{downarowicz2016isomorphic, fuhrmann2022structure, hauser2024mean} for further details on topo-isomorphic factor maps. 
\end{remark}

We will need the following results of \cite{fuhrmann2022structure}. 

\begin{proposition}\cite[Theorem 3.16]{fuhrmann2022structure}
\label{pro:meanEquicontinuousImpliesBanachProximalToMEF}
\begin{itemize}
    \item[(i)] If $\pi\colon X\to Y$ is a Banach proximal factor map onto an equicontinuous action $Y$, then $\pi$ is the factor map onto the maximal equicontinuous factor of $X$. 
    \item[(ii)] An action $X$ is mean equicontinuous if and only if the factor map onto the maximal equicontinuous factor is Banach proximal.
\end{itemize}
\end{proposition}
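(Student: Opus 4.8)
The plan is to prove (i) first and then deduce both directions of (ii) from it, the crucial auxiliary observation being that a vanishing Weyl distance forces equal images under the maximal equicontinuous factor. To set this up, fix an invariant metric $\rho$ on $X_{\operatorname{eq}}$ and consider the continuous invariant pseudo-metric $\tilde\rho:=\rho\circ(\pi_{\operatorname{eq}}\times\pi_{\operatorname{eq}})$ on the product action $X^2$. Since $\tilde\rho$ is continuous and $X^2$ is compact, $d(x,x')=0$ forces $x=x'$ and hence $\tilde\rho(x,x')=0$; a routine compactness argument thus shows that for every $\epsilon>0$ the function $d$ is $\tfrac{\epsilon}{2}$-$\delta'$-majorizing for $\tilde\rho$ for a suitable $\delta'>0$. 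As $\tilde\rho$ is invariant we have $\average(\tilde\rho)=\tilde\rho$, so Lemma \ref{lem:averagesMajorizingTechniques} yields a $\delta>0$ with $D(x,x')\leq\delta\Rightarrow\tilde\rho(x,x')\leq\epsilon$. Letting $\epsilon\to 0$ gives the key implication
\[
    D(x,x')=0\quad\Longrightarrow\quad \pi_{\operatorname{eq}}(x)=\pi_{\operatorname{eq}}(x').
\]

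For (i), let $\pi\colon X\to Y$ be Banach proximal with $Y$ equicontinuous. By the universal property of the maximal equicontinuous factor there is a factor map $\psi\colon X_{\operatorname{eq}}\to Y$ with $\pi=\psi\circ\pi_{\operatorname{eq}}$, and it suffices to show that $\psi$ is injective, since a continuous bijection between compact metric spaces is a conjugacy. If $\psi(a)=\psi(b)$, choose $x,x'\in X$ with $\pi_{\operatorname{eq}}(x)=a$ and $\pi_{\operatorname{eq}}(x')=b$; then $\pi(x)=\psi(a)=\psi(b)=\pi(x')$, so Banach proximality yields $D(x,x')=0$, and the key implication forces $a=b$. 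Hence $\psi$ is a conjugacy and $\pi$ is the maximal equicontinuous factor.

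For the forward implication of (ii), assume $X$ is mean equicontinuous. Using Proposition \ref{pro:averagesAandAFPickFolner} to realise $D$ on any three prescribed pairs in $X^2$ by a single Følner sequence, the subadditivity of $\limsup$ shows that $D$ satisfies the triangle inequality, so $D$ is an invariant pseudo-metric (invariance from Proposition \ref{pro:averageBasicProperties}). Let $X_D$ be the metric quotient of $X$ by the relation $\{D=0\}$, equipped with the induced metric $\bar D$ and the induced, isometric $G$-action. Mean equicontinuity is exactly the statement that the quotient map $q\colon X\to X_D$ is continuous; hence $X_D$ is a compact metric equicontinuous factor of $X$, and $q$ is Banach proximal because $\bar D$ separates its fibres. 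By part (i), $q$ is the maximal equicontinuous factor, so $\pi_{\operatorname{eq}}$ is conjugate to $q$ and therefore Banach proximal.

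The reverse implication of (ii) is the hard direction. Assume $\pi_{\operatorname{eq}}$ is Banach proximal; together with the key implication this shows $\{D=0\}$ equals the fibre relation $N:=\{(x,x')\colon \pi_{\operatorname{eq}}(x)=\pi_{\operatorname{eq}}(x')\}$, a compact invariant subset of $X^2$. Since $d$ small forces $\tilde\rho$ small, mean equicontinuity will follow once I establish the uniform estimate $M(\delta):=\sup\{D(x,x')\colon \tilde\rho(x,x')\leq\delta\}\to 0$ as $\delta\to 0$. Here the invariance of $\tilde\rho$ is decisive: the whole orbit of a pair with $\tilde\rho\leq\delta$ stays inside $\{\tilde\rho\leq\delta\}$, so the description in Remark \ref{rem:descriptionD} gives $M(\delta)\leq\inf_{K\in\hyperhaar(G)}\sup_{\tilde\rho(x,x')\leq\delta}(K^*d)(x,x')$, and by continuity of each $K^*d$ the right-hand side decreases to $\inf_{K\in\hyperhaar(G)}\sup_{N}K^*d$ as $\delta\to 0$. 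The crux, and the step I expect to be the main obstacle, is to show that this last quantity vanishes: for each $p\in N$ Banach proximality provides some compact $K_p$ with $K_p^*d$ small on the entire (compact) orbit closure of $p$, but one must produce a single $K$ that works uniformly over all of $N$, which amounts to interchanging $\inf_K$ and $\sup_{p\in N}$. I expect to resolve this by a compactness argument on $N$ exploiting the subadditive structure of the averages; this is precisely the substance of \cite[Theorem 3.16]{fuhrmann2022structure}, which may alternatively be invoked directly.
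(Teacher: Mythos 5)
First, note that the paper does not prove this proposition at all: it is imported verbatim from \cite[Theorem 3.16]{fuhrmann2022structure}, so your closing fallback of invoking that theorem directly is exactly what the paper does. Relative to that bare citation, the parts you do prove are correct. The key implication $D(x,x')=0\Rightarrow \pi_{\operatorname{eq}}(x)=\pi_{\operatorname{eq}}(x')$ via the invariant pseudo-metric $\tilde\rho$, $\average(\tilde\rho)=\tilde\rho$ (by $C_G$-additivity from Proposition \ref{pro:averageBasicProperties}) and Lemma \ref{lem:averagesMajorizingTechniques} is sound; part (i) follows cleanly; and the forward half of (ii) via the quotient $X/\{D=0\}$ works, with the triangle inequality for $D$ correctly secured through Proposition \ref{pro:averagesAdifferentApproaches}/\ref{pro:averagesAandAFPickFolner} (you should still verify joint continuity of the induced action on $X_D$, which follows from invariance of $D$ together with mean equicontinuity, but this is routine).

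The genuine gap is the one you flag yourself, and your proposed repair would not close it. You reduce the reverse half of (ii) to $\inf_{K}\sup_{p\in N}\sup_{h\in G}\frac{1}{\haar{K}}\int_{Kh}d(g.p)\dd g=0$, but a ``compactness argument on $N$ exploiting the subadditive structure'' fails for two concrete reasons. First, $\Phi_K(p):=\sup_h (Kh)^*d(p)$ is a supremum of continuous functions, hence only \emph{lower} semicontinuous in $p$; smallness $\Phi_{K_p}(p)<\epsilon$ does not persist on a neighbourhood of $p$, because a nearby $q$ must have its \emph{entire orbit} inside $\{K_p^*d<2\epsilon\}$ and the action is not uniformly continuous over all of $G$ — so the finite-subcover step breaks down. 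Second, applying the subadditivity machinery to $J(K):=\sup_{p\in N}\int_K d(g.p)\dd g$ (which does satisfy \ref{S}, by taking suprema in Example \ref{exa:SpropertyForGfunctions}) only yields via Proposition \ref{pro:subadditivityGeneral} that $\hat J(F_n)/\haar{F_n}$ converges to $\inf_K\hat J(K)/\haar{K}$; it gives no mechanism for that infimum to vanish, and pointwise vanishing of $D$ on $N$ does not pass to the supremum without genuinely dynamical input. It is telling that the paper's own analogous uniformizations are not purely topological: Proposition \ref{pro:regularLifting} goes from fibres to balls via generic points, regularity of the Borel measure and finite covers of large-measure subsets of the injectivity points, while the purely topological comparison the paper does prove (Lemma \ref{lem:meanDiameterFibresVsBallsForBPfactorMap}) bounds \emph{fibres by balls} — the opposite direction from the one you need here. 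So as written your proposal is a correct partial reconstruction of (i) and of the easy half of (ii), with the hard kernel of the reverse implication left open; completing it requires a genericity/ergodic-theoretic argument of the kind used in \cite{fuhrmann2022structure}, or the citation itself, as in the paper.
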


If $X$ is an action, then there exists a smallest closed subset $A\subseteq X$ such that $\mu(A)=1$ for all $\mu\in \mathcal{M}_G(X)$. We call $A$ the \emph{maximal support of $X$}. 
For reference regarding the following, see \cite[Proposition 5.7]{fuhrmann2022structure} and \cite[Theorem 5.8]{fuhrmann2022structure}. 

\begin{proposition}
    \label{pro:meanEquicontinuityResults}
    Let $X$ be an action and consider a left F\o lner sequence $\mathcal{F}$. 
    \begin{itemize}
        \item[(i)] If $X$ is $\mathcal{F}$-mean equicontinuous, then any point $x$ in the maximal support of $X$ is $\mathcal{F}$-generic.
        \item[(ii)] If $X$ is mean equicontinuous, then any $x\in X$ is $\mathcal{F}$-generic. 
    \end{itemize}
\end{proposition}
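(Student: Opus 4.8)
The plan is to reduce both parts to the pointwise identity $\average_\mathcal{F} f = \underline{\average}_\mathcal{F} f$ on the relevant set, where I abbreviate $\underline{\average}_\mathcal{F} f := -\average_\mathcal{F}(-f) = \liminf_{n} F_n^* f$; indeed $x$ is $\mathcal{F}$-generic exactly when $\average_\mathcal{F} f(x) = \underline{\average}_\mathcal{F} f(x)$ for every $f$ in some fixed countable dense subset of $C(X)$. First I would record the Lipschitz estimate that for any $L$-Lipschitz $f\in C(X)$ one has $\limsup_{n}|F_n^* f(x) - F_n^* f(x')| \le L\, D_\mathcal{F}(x,x')$, which follows at once from $|f(g.x)-f(g.x')|\le L\, d(g.x,g.x')$ together with $D_\mathcal{F} = \average_\mathcal{F}(d)$. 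Two consequences are central: under $\mathcal{F}$-mean equicontinuity $D_\mathcal{F}(x,x')\to 0$ as $x'\to x$, so both $\average_\mathcal{F} f$ and $\underline{\average}_\mathcal{F} f$ are continuous and $h_f := \average_\mathcal{F} f - \underline{\average}_\mathcal{F} f \ge 0$ is a continuous function; and whenever $D_\mathcal{F}(x,x') = 0$ with $x'$ being $\mathcal{F}$-generic, the estimate transfers $\mathcal{F}$-genericity (with the same limit measure) to $x$.

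For part (ii) I would route through the maximal equicontinuous factor $\pi := \pi_{\operatorname{eq}}\colon X \to X_{\operatorname{eq}}$, which by Proposition \ref{pro:meanEquicontinuousImpliesBanachProximalToMEF}(ii) is Banach proximal and hence topo-isomorphic. Here two inputs combine. Since $X_{\operatorname{eq}}$ is equicontinuous, the orbit closure of each point is minimal and uniquely ergodic, so every $y\in X_{\operatorname{eq}}$ is $\mathcal{F}$-generic for some $\nu_y\in \mathcal{M}_G(X_{\operatorname{eq}})$. Moreover, topo-isomorphy forces each $\nu\in \mathcal{M}_G(X_{\operatorname{eq}})$ to possess a unique invariant lift: if $\mu_1,\mu_2\in \mathcal{M}_G(X)$ satisfy $\pi_*\mu_i = \nu$, then applying topo-isomorphy to $\mu := \tfrac{1}{2}(\mu_1+\mu_2)$ shows $\pi$ is injective on a $\mu$-conull set, so the fiber disintegrations of $\mu$ are Dirac and the domination $\mu_i \le 2\mu$ gives $\mu_1 = \mu = \mu_2$. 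Now for arbitrary $x\in X$ with $y := \pi(x)$, every cluster point $\mu$ of $(F_n)_*\delta_x$ is invariant and satisfies $\pi_*\mu = \nu_y$, since $\pi$ carries $(F_n)_*\delta_x$ to $(F_n)_*\delta_y$, which converges to $\nu_y$; uniqueness of the lift then forces all cluster points to coincide, so $x$ is $\mathcal{F}$-generic.

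For part (i) the topo-isomorphic lift is unavailable, since $\mathcal{F}$-mean equicontinuity controls only $D_\mathcal{F}$, so I would work on the maximal support $A$ directly and prove $h_f \equiv 0$ on $A$ for each $f$ in the countable dense family. As $h_f \ge 0$ is continuous and $A$ is the closure of the union of the supports of the ergodic invariant measures, it suffices to show $h_f = 0$ on $\supp\mu$ for every ergodic $\mu$. The intended mechanism is to produce $\mathcal{F}$-generic points densely in $\supp\mu$, via the ergodic theorem for $\mu$, and then to spread genericity to all of $\supp\mu$ using mean equicontinuity and the transference principle from the first paragraph: for $x\in \supp\mu$ and an $\mathcal{F}$-generic $x'$ with $d(x,x')$ small, the Lipschitz estimate bounds $h_f(x) \le 2L\, D_\mathcal{F}(x,x')$, which tends to $0$, and continuity of $h_f$ closes the argument.

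I expect the main obstacle to lie precisely in this density step for a possibly non-tempered $\mathcal{F}$. The pointwise ergodic theorem along an arbitrary left F\o lner sequence need not hold, so a $\mu$-typical point is only guaranteed to be generic along a subsequence $\mathcal{F}'$, whereas $\mathcal{F}$-genericity concerns the full sequence; this discrepancy cannot be bridged by the crude inequalities $\underline{\average}_\mathcal{F} f \le \underline{\average}_{\mathcal{F}'} f$ and $\average_{\mathcal{F}'} f \le \average_\mathcal{F} f$ alone. The resolution I would pursue is to exploit $\mathcal{F}$-mean equicontinuity as an asymptotic equicontinuity of the averaging family $(F_n^* f)_n$ on $A$, so that the convergence in measure supplied by the mean ergodic theorem can be upgraded to uniform, and hence full-sequence pointwise, convergence on $A$, which would simultaneously yield the density and the genericity on the support.
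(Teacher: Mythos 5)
Two preliminary remarks: the paper itself gives no proof of this proposition (it is imported from \cite[Proposition 5.7, Theorem 5.8]{fuhrmann2022structure}), so your attempt can only be judged on its own merits; and your part (ii) is in fact correct and essentially complete. Routing through $\pi_{\operatorname{eq}}$ via Proposition \ref{pro:meanEquicontinuousImpliesBanachProximalToMEF}(ii) and the remark that Banach proximal factor maps are topo-isomorphic, your uniqueness-of-lift argument (apply topo-isomorphy to $\mu=\tfrac12(\mu_1+\mu_2)$, get Dirac disintegration, use $\mu_i\le 2\mu$) is sound; every point of an equicontinuous action is indeed $\mathcal{F}$-generic for every left F\o lner sequence (orbit closures are minimal equicontinuous, hence uniquely ergodic), and the cluster-point argument in the compact space $\mathcal{M}(X)$ closes it. Likewise, your reduction apparatus for (i) is fine: genericity is equivalent to $\average_\mathcal{F}f(x)=\liminf_n F_n^*f(x)$ for a countable dense family of Lipschitz $f$, the function $h_f$ is continuous under $\mathcal{F}$-mean equicontinuity, the maximal support is the closure of $\bigcup_{\mu\ \mathrm{ergodic}}\supp\mu$, and continuity of $h_f$ handles points of the maximal support lying in no single $\supp\mu$.

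The genuine gap is exactly where you flag it, and your proposed repair does not work as stated. The estimate $\limsup_n|F_n^*f(x)-F_n^*f(x')|\le L\,D_\mathcal{F}(x,x')$ is pointwise in $x'$: the index beyond which the two averages are close depends on $x'$. Convergence of $F_n^*f$ to $\mu(f)$ in $\mu$-measure only supplies, for each $n$ separately, a large set of good witnesses in $\ball_\delta(x)$; with a witness $x'_n$ varying in $n$ the asymptotic estimate yields nothing, and convergence in measure does not produce a single fixed $x'\in\ball_\delta(x)$ that is good for all large $n$ (nor does $\mathcal{F}$-mean equicontinuity make the $\limsup$ uniform over the ball), so the "upgrade to uniform convergence on $A$" is not available. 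The correct fix is a subsequence principle built from ingredients you already have: for any subsequence $\mathcal{G}$ of $\mathcal{F}$ one has $D_\mathcal{G}\le D_\mathcal{F}$, so your Lipschitz transference persists along $\mathcal{G}$; by the statement quoted in Section 2 of the paper from \cite[Theorem 2.4]{fuhrmann2022structure}, $\mathcal{G}$ admits a further subsequence $\mathcal{G}'$ along which $\mu$-almost every point is generic for $\mu$; fixing one such generic $x'\in\ball_\delta(x)$ (possible since $x\in\supp\mu$ gives $\mu(\ball_\delta(x))>0$) yields $\limsup_{\mathcal{G}'}|F_n^*f(x)-\mu(f)|\le L\epsilon$, hence $\lim_{\mathcal{G}'}F_n^*f(x)=\mu(f)$. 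Since every subsequence of $(F_n^*f(x))_n$ thus has a further subsequence converging to $\mu(f)$, the full sequence converges, giving full-$\mathcal{F}$ genericity on $\supp\mu$; your continuity step then extends this to the maximal support and completes (i).
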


\subsection{The maximal equicontinuous factor of a diam-mean equicontinuous action}

In order to relate diam-mean equicontinuity to regularity we need to relate the mean diameter of fibres to that of balls. For Banach proximal factor maps this will be achieved in the following lemmas. 

\begin{lemma}
\label{lem:meanDiameterFibresVsBallsForBPfactorMapPreparationsII}
    Let $X$ be an action. 
    For $A\in \hyper(X)$, $K\in \hyperhaar(G)$, and $\epsilon>0$ there exists a finite non-empty subset 
    $E\subseteq A$ such that 
    \[K^*\diam(A)\leq K^*\diam(E) +\epsilon.\]
\end{lemma}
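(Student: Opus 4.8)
The plan is to establish the stronger \emph{pointwise} estimate $\diam(g.A)\le \diam(g.E)+\epsilon$ for every $g\in K$, and then to conclude by averaging: since $K^*\diam(A)=\haar{K}^{-1}\int_K \diam(g.A)\dd g$ and analogously for $E$, the claimed inequality follows at once by integrating the pointwise bound over $K$ and dividing by $\haar{K}$. Note that the reverse inequality $K^*\diam(E)\le K^*\diam(A)$ is automatic from $E\subseteq A$, so the real content is that a finite subset already captures the averaged diameter up to $\epsilon$.

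The decisive tool is the uniform equicontinuity of the family $(x\mapsto g.x)_{g\in K}$: for every $\epsilon'>0$ there exists $\eta>0$ such that $d(x,x')\le \eta$ forces $d(g.x,g.x')\le \epsilon'$ simultaneously for all $g\in K$. I would obtain this from the continuity of the action together with the compactness of $K$ and $X$, phrasing it so as not to rely on metrizability of $G$ (which is not assumed). Concretely, the function $F\colon X\times X\to\mathbb{R}^+$ with $F(x,x'):=\max_{g\in K}d(g.x,g.x')$ is well defined and continuous, being the maximum over the compact parameter set $K$ of the jointly continuous map $(g,x,x')\mapsto d(g.x,g.x')$, and it vanishes on the diagonal. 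Hence $\{(x,x')\in X\times X;\, F(x,x')\ge \epsilon'\}$ is compact and avoids the diagonal, so the continuous function $d$ attains a strictly positive minimum $\eta$ on it; this $\eta$ is the desired modulus.

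Given $\epsilon$, I set $\epsilon':=\epsilon/2$, fix the corresponding $\eta$, and use compactness (total boundedness) of $A$ to choose a finite non-empty $\eta$-net $E\subseteq A$, i.e.\ with $A\subseteq\bigcup_{x\in E}\ball_\eta(x)$; this set is non-empty since $A\in\hyper(X)$ is non-empty. Now fix $g\in K$ and pick a maximizing pair $a,a'\in A$ with $\diam(g.A)=d(g.a,g.a')$; choosing $x_i,x_j\in E$ with $d(a,x_i)\le\eta$ and $d(a',x_j)\le\eta$, the triangle inequality together with the modulus gives $\diam(g.A)\le d(g.x_i,g.x_j)+2\epsilon'\le \diam(g.E)+\epsilon$, where the last step uses $g.x_i,g.x_j\in g.E$. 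Since this holds for every $g\in K$, averaging over $K$ finishes the proof.

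The only genuine obstacle is the uniform modulus of continuity over $K$, and specifically carrying it out without a metric on $G$; the maximum-function argument above resolves this. Everything else --- the $\eta$-net, the triangle inequality, and the passage from the pointwise bound to the averaged one --- is routine, the averaging being trivial precisely because the diameter comparison is obtained uniformly in $g$.
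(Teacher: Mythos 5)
Your proof is correct, and it takes a genuinely different route from the paper's. You discretize the \emph{space}: via the continuous maximum function $(x,x')\mapsto \max_{g\in K} d(g.x,g.x')$ you extract a uniform equicontinuity modulus $\eta$ over the compact set $K$, take $E$ to be a finite $\eta$-net of $A$, and apply the triangle inequality to a diameter-realizing pair of $g.A$ to get $\diam(g.A)\leq \diam(g.E)+\epsilon$ uniformly in $g\in K$. The paper instead discretizes the \emph{group}: it uses uniform continuity of $K\times X\to X$ and of the induced hyperspace map $K\times \hyper(X)\to \hyper(X)$ to find a neighbourhood $V$ of the identity with $K\subseteq VF$ for a finite $F\subseteq K$, and takes $E$ to consist of the diameter-realizing pairs $x_h,x_h'\in A$ of $h.A$ for $h\in F$; for $g\in K$ with $gh^{-1}\in V$ it then compares $\diam(g.A)$ to $\diam(h.A)=\diam(h.E)\leq \diam(g.E)+\epsilon/3$. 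Both arguments reduce to the same pointwise bound on $K$ followed by the same trivial averaging (measurability of $g\mapsto \diam(g.A)$ being clear since $\diam\in C^+(\hyper(X))$ and the hyperspace action is continuous). What each buys: your version needs lighter continuity input (only joint continuity of $(g,x)\mapsto g.x$ on $K\times X$, no hyperspace action), and your maximum-function device correctly avoids any appeal to metrizability of $G$; the size of your $E$ is governed by the covering number of $A$ at scale $\eta$. The paper's version instead bounds $|E|$ by twice the covering number of $K$ in $G$, independently of the geometry of $A$, and reuses the hyperspace continuity already present elsewhere in the paper. One cosmetic repair: if $\eta$ is the minimum of $d$ on the compact set $\{(x,x');\, \max_{g\in K}d(g.x,g.x')\geq \epsilon'\}$, then $d(x,x')\leq \eta$ only guarantees the conclusion when the inequality is strict (a boundary pair with $d(x,x')=\eta$ may lie in that set), so take your net at scale $\eta/2$, and note the degenerate case where that set is empty, in which any $\eta>0$ works; neither point affects the substance of the argument.
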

\begin{proof}
    For any $g\in G$, we choose $x_g,x_g'\in A$ with $d(g.x_g,g.x_g')=\diam(g.A)$. 
    Recall that $\diam\colon \hyper(X)\to \mathbf{R}^+$ is continuous. 
    Furthermore, note that 
    $K\times X\to X$
    and 
    $K\times \hyper(X)\to \hyper(X)$ are uniformly continuous. 
    Thus, there exists a neighbourhood $V$ of the identity element of $G$ such that
    for all $x\in X$, $A'\in \hyper(X)$ and $g,g'\in K$ with $g'g^{-1}\in V$ we have 
    $d(g.x,g'.x)\leq \epsilon/3$ and $|\diam(g.A')-\diam(g'.A')|\leq \epsilon/3$. 
    Choose a finite subset $F$ of $K$ with $K\subseteq VF$ and denote
    $E:=\{x_h,x_h';\, h\in F\}$. 
    For $g\in K$ there exists $h\in F$ with $gh^{-1}\in V$.
    Such $h$ satisfies 
    \[\diam(h.A)=d(h.x_h,h.x_h')=\diam(h.E)\leq \diam(g.E)+\tfrac{1}{3}\epsilon\]     
    and allows to compute
    \begin{align*}
        \diam(g.A)
        &=d(g.x_g,g.x_g')\\
        &\leq d(g.x_g,h.x_g) + d(h.x_g,h.x_g') + d(h.x_g',g.x_g')\\
        &\leq \diam(h.A) + \tfrac{2}{3}\epsilon
        \leq \diam(g.E) + \epsilon. 
    \end{align*}
    This shows that for all $g\in K$ we have 
    $\diam(g.A)\leq \diam(g.E)+\epsilon$. We thus conclude that $K^*\diam(A)\leq K^*\diam(E)+\epsilon$. 
\end{proof}

\begin{lemma}
\label{lem:meanDiameterFibresVsBallsForBPfactorMapPreparationsI}
    Let $\pi\colon X\to Y$ be a Banach proximal factor map and $y\in Y$. 
    For all finite non-empty subsets $E$ of $\pi^{-1}(y)$ we have $\Diam(E)=0$. 
    In particular, for $\delta>0$ there exists $g\in G$ with $\diam(g.E)\leq \delta$. 
\end{lemma}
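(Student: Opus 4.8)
The plan is to exploit the defining formula $\diam(g.E)=\max_{x,x'\in E}d(g.x,g.x')$ together with the finiteness of $E$ in order to reduce the mean diameter of $E$ to the pairwise Weyl pseudo-metric, which vanishes by Banach proximality. Write $E=\{x_1,\dots,x_m\}$. The elementary observation that a maximum of finitely many nonnegative numbers is bounded by their sum yields the pointwise estimate
\[
\diam(g.E)=\max_{1\le i<j\le m}d(g.x_i,g.x_j)\le \sum_{1\le i<j\le m}d(g.x_i,g.x_j)
\qquad\text{for all }g\in G.
\]

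First I would fix an arbitrary left F\o lner sequence $\mathcal{F}=(F_n)$, integrate this inequality over $F_n$, divide by $\haar{F_n}$, and pass to the $\limsup$; since the sum is finite, the subadditivity of $\limsup$ lets me distribute over it and obtain, directly from the integral descriptions of $\Diam_\mathcal{F}$ and $D_\mathcal{F}$,
\[
\Diam_\mathcal{F}(E)\le \sum_{1\le i<j\le m}\average_\mathcal{F}(d)(x_i,x_j)=\sum_{1\le i<j\le m}D_\mathcal{F}(x_i,x_j).
\]
Because $\average_\mathcal{F}\le\average$ pointwise, each summand is bounded by $D(x_i,x_j)$, and since $x_i,x_j$ both lie in the fibre $\pi^{-1}(y)$, Banach proximality forces $D(x_i,x_j)=0$. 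Hence $\Diam_\mathcal{F}(E)=0$ for every left F\o lner sequence $\mathcal{F}$, and taking the supremum gives $\Diam(E)=\sup_\mathcal{F}\Diam_\mathcal{F}(E)=0$.

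For the final assertion I would argue by contraposition: if $\diam(g.E)>\delta$ held for every $g\in G$, then along any left F\o lner sequence the integrand defining $\Diam_\mathcal{F}(E)$ would exceed $\delta$ uniformly, forcing $\Diam(E)\ge\delta>0$ and contradicting what was just shown. Thus some $g\in G$ satisfies $\diam(g.E)\le\delta$.

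The conceptual heart of the argument---and the reason the statement is confined to finite $E$---is the step $\max\le\sum$: the average $\average$ does not commute with the maximum, so for infinite fibres the mean diameter can remain strictly positive even when every pairwise Weyl distance vanishes; this is exactly what separates diam-mean from mean equicontinuity. Finiteness is therefore essential and is the sole place it is invoked. I anticipate no serious obstacle beyond keeping track of this reduction, the remaining estimates being routine consequences of the integral descriptions of $\Diam_\mathcal{F}$ and $D_\mathcal{F}$ together with the subadditivity of the $\limsup$.
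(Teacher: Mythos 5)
Your proof is correct and follows essentially the same route as the paper: bound $\diam(g.E)=\max_{x,x'\in E}d(g.x,g.x')$ by the finite sum $\sum_{x,x'\in E}d(g.x,g.x')$ and conclude $\Diam(E)\leq \sum_{x,x'\in E}D(x,x')=0$ via subadditivity and Banach proximality. The paper compresses the F\o lner-sequence bookkeeping into the phrase ``a straightforward subadditivity argument'' and leaves the ``in particular'' clause implicit, but your spelled-out version, including the contraposition for the final assertion, is exactly that argument.
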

\begin{proof}
    For $g\in G$ we have 
    \[\diam(g.E)=\max_{x,x'\in E}d(g.x,g.x')\leq \sum_{x,x'\in E}d(g.x,g.x').\] 
    We thus observe $\Diam(E)\leq \sum_{x,x'\in E}D(x,x')=0$ from a straightforward subadditivity argument. 
\end{proof}

\begin{lemma}
\label{lem:meanDiameterFibresVsBallsForBPfactorMapPreparationsIII}
    Let $\pi\colon X\to Y$ be a Banach proximal factor map, $y\in Y$ and $\delta>0$. 
    For $K\in \hyperhaar(G)$ and $\epsilon>0$ there exist $g\in G$ and $x\in X$ with 
        \[K^*\diam(\pi^{-1}(y))\leq (Kg)^*\diam(\ball_\delta(x))+\epsilon.\] 
\end{lemma}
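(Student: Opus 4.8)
The plan is to combine the two preceding lemmas: Lemma~\ref{lem:meanDiameterFibresVsBallsForBPfactorMapPreparationsII} lets me replace the fibre $\pi^{-1}(y)$ by a finite subset $E$ at the cost of an $\epsilon/2$-error in the $K^*\diam$-average, and Lemma~\ref{lem:meanDiameterFibresVsBallsForBPfactorMapPreparationsI} lets me find a group element $g$ that contracts this finite set $E$ below any prescribed threshold. The first step is to apply Lemma~\ref{lem:meanDiameterFibresVsBallsForBPfactorMapPreparationsII} to $A:=\pi^{-1}(y)$, the given $K$, and $\epsilon/2$, producing a finite non-empty $E\subseteq \pi^{-1}(y)$ with
\[
    K^*\diam(\pi^{-1}(y))\leq K^*\diam(E)+\tfrac{\epsilon}{2}.
\]

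Next I would use the uniform continuity of the maps $K\times X\to X$ and $K\times\hyper(X)\to \hyper(X)$ (as in the proof of Lemma~\ref{lem:meanDiameterFibresVsBallsForBPfactorMapPreparationsII}) to fix a neighbourhood $V$ of the identity controlling how much $\diam(\cdot)$ can vary under a small right-translation of the averaging set, and to choose a suitable small $\eta>0$. The idea is that I want to eventually compare $K^*\diam(E)$ with $(Kg)^*\diam(\ball_\delta(x))$ for a single ball. Since $E$ is finite, I enumerate $E=\{x_1,\dots,x_m\}$, and for the comparison to succeed I need a group element $g$ such that $g.E\subseteq \ball_{\delta/2}(g.x_1)$, say, so that for every $h\in G$ the translate $hg.E$ sits inside a ball of radius $\delta$ around the point $x:=g.x_1$ shifted appropriately—more precisely, so that for each relevant $g'$ in the averaging region one has $\diam(g'.E)\leq \diam(g'.\ball_\delta(x))$ after passing through $g$.

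The key device is Lemma~\ref{lem:meanDiameterFibresVsBallsForBPfactorMapPreparationsI}, which guarantees $\Diam(E)=0$, and hence (by the description of $\Diam$ in Proposition~\ref{pro:descriptionDiam}, taking the relevant compact averaging set) the existence of $g\in G$ making $\diam(g.E)$ as small as I please. I would choose $g$ so that $\diam(g.E)\leq \delta/2$, whence $g.E\subseteq \ball_{\delta/2}(g.x_1)\subseteq \ball_\delta(g.x_1)$; setting $x:=g.x_1$ gives $g.E\subseteq \ball_\delta(x)$. Then for every $g'\in G$,
\[
    \diam(g'g.E)\leq \diam(g'.\ball_\delta(x)),
\]
since $g.E\subseteq \ball_\delta(x)$ and applying $g'$ is monotone for diameters under set inclusion. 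Averaging this inequality of the functions $g'\mapsto \diam(g'g.E)$ and $g'\mapsto\diam(g'.\ball_\delta(x))$ over $g'\in K$, and using the change of variables that turns the average of $\diam(g'g.E)$ over $g'\in K$ into $(Kg)^*\diam(E)$, I obtain
\[
    K^*\diam(E)=(Kg)^{*}\big(\text{shift}\big)\leq (Kg)^*\diam(\ball_\delta(x)).
\]
Combining with the first step yields $K^*\diam(\pi^{-1}(y))\leq (Kg)^*\diam(\ball_\delta(x))+\epsilon$, as required.

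\textbf{Main obstacle.} I expect the delicate point to be the bookkeeping in the change of variables relating $K^*\diam(E)$ to $(Kg)^*\diam(\ball_\delta(x))$: one must be careful that $g.E\subseteq \ball_\delta(x)$ forces, for \emph{every} $g'\in K$, the pointwise bound $\diam(g'.(g.E))\leq \diam(g'.\ball_\delta(x))$, and that averaging the left-hand side over $g'\in K$ is exactly $(Kg)^*\diam(E)$ after substituting the averaging variable. The monotonicity $\diam(g'.A')\leq\diam(g'.A'')$ for $A'\subseteq A''$ is what makes the inclusion $g.E\subseteq\ball_\delta(x)$ do the work, so the only real content is producing the single $g$ (via $\Diam(E)=0$) that simultaneously places \emph{all} of $g.E$ into one $\delta$-ball; this is where the Banach proximality hypothesis, through Lemma~\ref{lem:meanDiameterFibresVsBallsForBPfactorMapPreparationsI}, is essential.
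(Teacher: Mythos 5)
Your overall route is exactly the paper's: reduce the fibre to a finite set $E$ via Lemma~\ref{lem:meanDiameterFibresVsBallsForBPfactorMapPreparationsII}, contract $E$ into a $\delta$-ball via Lemma~\ref{lem:meanDiameterFibresVsBallsForBPfactorMapPreparationsI}, and conclude by monotonicity of $\diam$ under inclusion plus a change of variables. However, your final step has a real bookkeeping error: with your choice $g.E\subseteq \ball_\delta(x)$, the pointwise bound is $\diam(g'g.E)\leq \diam(g'.\ball_\delta(x))$ for $g'\in K$, and averaging over $g'\in K$ gives
\[
(Kg)^*\diam(E)\;\leq\; K^*\diam(\ball_\delta(x)),
\]
i.e.\ the translate $g$ lands on the $E$-side, not on the ball side. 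This does \emph{not} chain with your first step, whose output bounds $K^*\diam(\pi^{-1}(y))$ by $K^*\diam(E)+\epsilon/2$; the asserted identity ``$K^*\diam(E)=(Kg)^*(\text{shift})$'' is false, since $K^*\diam(E)\neq (Kg)^*\diam(E)$ in general. The repair is a single inversion, and it is what the paper does: take $h\in G$ with $\diam(h.E)\leq \delta$ (the ``in particular'' clause of Lemma~\ref{lem:meanDiameterFibresVsBallsForBPfactorMapPreparationsI} gives this directly, no detour through Proposition~\ref{pro:descriptionDiam} needed), set $g:=h^{-1}$ and pick $x\in h.E$. Then $E\subseteq g.\ball_\delta(x)$, so $K^*\diam(E)\leq (K^*\diam)(g.\ball_\delta(x))=((Kg)^*\diam)(\ball_\delta(x))$, which chains correctly. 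Note that this last identity is exact even for non-unimodular $G$, because the modular factor in $\haar{Kg}=\Delta(g)\haar{K}$ cancels against the Jacobian of the substitution $u=g'g$; since the conclusion is existential in $g$, renaming $g^{-1}$ as $g$ in your version would also work, but as written your chain of inequalities is not valid.

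A secondary point: the preparation you sketch with the neighbourhood $V$ of the identity, $\eta$, and the uniform continuity of $K\times X\to X$ is superfluous here and is never used in your argument. That machinery belongs to the proof of Lemma~\ref{lem:meanDiameterFibresVsBallsForBPfactorMapPreparationsII}, which you are invoking as a black box; the change of variables in the present lemma is an exact measure-theoretic identity requiring no continuity at all.
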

\begin{proof}
    By Lemma \ref{lem:meanDiameterFibresVsBallsForBPfactorMapPreparationsII} there exists a finite non-empty subset $E\subseteq \pi^{-1}(y)$ with $K^*\diam(\pi^{-1}(y))\leq K^*\diam(E)+\epsilon$. 
    From Lemma \ref{lem:meanDiameterFibresVsBallsForBPfactorMapPreparationsI} we know that there exists $g\in G$ with 
    $\diam(g^{-1}.E)\leq \delta$. 
    For any $x\in g^{-1}.E$ we observe $g^{-1}.E\subseteq \ball_\delta(x)$, i.e.\ $E\subseteq g.\ball_\delta(x)$ and compute
    $
    K^*\diam(\pi^{-1}(y))-\epsilon
        \leq  K^*\diam(E)
        \leq (K^*\diam)(g.\ball_\delta(x))
        = ((Kg)^*\diam)(\ball_\delta(x)).
    $
\end{proof}

\begin{lemma}
\label{lem:meanDiameterFibresVsBallsForBPfactorMap}
    Let $\pi\colon X\to Y$ be a Banach proximal factor map, $y\in Y$ and $\delta>0$. 
    We have 
    $\Diam(\pi^{-1}(y))\leq \sup_{x\in X}\Diam(\ball_\delta(x))$.
\end{lemma}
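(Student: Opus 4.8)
The plan is to combine the three preparatory lemmas through the infimum-over-$K$ description of the mean diameter from Proposition \ref{pro:descriptionDiam}. Recall that for any $A\in \hyper(X)$ we have
\[
\Diam(A)=\inf_{K\in \hyperhaar(G)}\sup_{h\in G}(Kh)^*\diam(A).
\]
So to bound $\Diam(\pi^{-1}(y))$ from above by $\sup_{x\in X}\Diam(\ball_\delta(x))$ it suffices, given $\epsilon>0$, to produce a single $K\in \hyperhaar(G)$ for which $\sup_{h\in G}(Kh)^*\diam(\pi^{-1}(y))$ is at most $\sup_{x\in X}\Diam(\ball_\delta(x))+\epsilon$. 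The natural strategy is first to fix a good $K$ witnessing the infimum for the ball-side quantity, and then to transport the estimate for $\pi^{-1}(y)$ onto a translate of a ball using Lemma \ref{lem:meanDiameterFibresVsBallsForBPfactorMapPreparationsIII}.

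More concretely, I would first choose, for the fixed $y$ and $\delta$, a compact $K\in \hyperhaar(G)$ such that
\[
\sup_{x\in X}\sup_{h\in G}(Kh)^*\diam(\ball_\delta(x))\leq \sup_{x\in X}\Diam(\ball_\delta(x))+\epsilon.
\]
Here one must be slightly careful, because the infimum description gives, for each $x$ separately, a compact set realizing the bound up to $\epsilon$, whereas I want one $K$ that works uniformly in $x$. The clean way around this is to apply the infimum formula not pointwise but to the right-invariant envelope: set $S:=\sup_{x\in X}\Diam(\ball_\delta(x))$ and observe that $\Diam(\ball_\delta(x))=\inf_K\sup_h (Kh)^*\diam(\ball_\delta(x))$ for every $x$, so $\sup_x\inf_K\sup_h(Kh)^*\diam(\ball_\delta(x))=S$; one then needs to exchange the $\sup_x$ and $\inf_K$. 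This exchange is exactly where I expect the main obstacle to lie. I would resolve it by working with the quantity $\Phi(K):=\sup_x\sup_h(Kh)^*\diam(\ball_\delta(x))$ and noting that $\Phi$ is a right-invariant function on $\hyperhaar(G)$ satisfying \ref{S} (it is the envelope $\hat H$ of a function of the type handled in Example \ref{exa:SpropertyForGfunctions}, now taking a supremum over the auxiliary parameter $x$), so that by Proposition \ref{pro:subadditivityGeneral}(iii) its infimum over $K$ equals a Følner-limit, and that limit is at most $S$.

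Once a suitable $K$ is fixed, the second half is a direct application of Lemma \ref{lem:meanDiameterFibresVsBallsForBPfactorMapPreparationsIII}: for this $K$ and this $\epsilon$ it furnishes $g\in G$ and $x\in X$ with
\[
K^*\diam(\pi^{-1}(y))\leq (Kg)^*\diam(\ball_\delta(x))+\epsilon\leq \sup_{x\in X}\sup_{h\in G}(Kh)^*\diam(\ball_\delta(x))+\epsilon.
\]
Since the left-hand side bounds $\inf_K\sup_h(Kh)^*\diam(\pi^{-1}(y))=\Diam(\pi^{-1}(y))$ from above once I take $h$ ranging over $G$ (the estimate above is really for each $h$ after replacing $K$ by $Kh$, and Lemma \ref{lem:meanDiameterFibresVsBallsForBPfactorMapPreparationsIII} produces the translate uniformly), combining the two displays yields $\Diam(\pi^{-1}(y))\leq S+2\epsilon$, and letting $\epsilon\to 0$ completes the proof.

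I should stress what makes this work: the Banach proximality hypothesis enters only through Lemmas \ref{lem:meanDiameterFibresVsBallsForBPfactorMapPreparationsI} and \ref{lem:meanDiameterFibresVsBallsForBPfactorMapPreparationsIII}, which together say that a finite subset of a fibre can be translated to be $\delta$-small, so that the mean diameter of the whole fibre is controlled by the mean diameter of a genuine $\delta$-ball around some point. The subadditivity machinery of Section \ref{sec:prelims} then upgrades this pointwise-in-$x$ comparison to the uniform supremum $\sup_{x\in X}\Diam(\ball_\delta(x))$ that appears in the statement. The only genuinely delicate point is the commutation of $\sup_x$ with $\inf_K$, and invoking Proposition \ref{pro:subadditivityGeneral} for the supremized function is the cleanest device I see for handling it.
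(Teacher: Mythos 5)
Your reduction stands or falls on the inequality $\inf_K\sup_x\sup_h (Kh)^*\diam(\ball_\delta(x))\leq \sup_{x}\Diam(\ball_\delta(x))=:S$, and this is where the argument has a genuine gap. The envelope $\Phi(K):=\sup_x\sup_h(Kh)^*\diam(\ball_\delta(x))$ does satisfy \ref{S} (the supremum over $x$ passes through the integral inequality, and right measurability holds since a supremum of continuous functions of $g$ is lower semicontinuous), so Proposition \ref{pro:subadditivityGeneral} indeed identifies $\inf_K\Phi(K)$ with a F\o lner limit $\lim_n\sup_x\sup_h(F_nh)^*\diam(\ball_\delta(x))$. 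But your final claim that ``that limit is at most $S$'' is unsupported: for each \emph{fixed} $x$ one has $\sup_h(F_nh)^*\diam(\ball_\delta(x))\to \Diam(\ball_\delta(x))\leq S$, yet this convergence is pointwise in $x$, with a rate governed by a near-optimal compact set $K=K(x)$, and the limit of a supremum of functions each converging to values $\leq S$ may well exceed $S$. What you are asserting is exactly the minimax exchange $\inf_K\sup_x\leq\sup_x\inf_K$, i.e.\ the nontrivial direction, and nothing in the subadditivity machinery delivers it; compactness of $X$ alone does not help either, since $x\mapsto\ball_\delta(x)$ is not continuous into $\hyper(X)$ and no equicontinuity or monotonicity in $n$ is available for a Dini-type upgrade to uniformity.

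The paper's proof avoids the exchange altogether, and the device it uses is the one missing from your proposal: a half-radius combined with convergent centers. Fix a left F\o lner sequence $\hat{\mathcal{F}}$, pass to a subsequence $(F_n)$ realizing $\Diam_{\hat{\mathcal{F}}}(\pi^{-1}(y))$ as a limit, and apply Lemma \ref{lem:meanDiameterFibresVsBallsForBPfactorMapPreparationsIII} with radius $\delta/2$ and errors $\epsilon_n\to 0$ to obtain $g_n\in G$ and $x_n\in X$ with $F_n^*\diam(\pi^{-1}(y))\leq (F_ng_n)^*\diam(\ball_{\delta/2}(x_n))+\epsilon_n$. Compactness of $X$ lets you assume $x_n\to x$, so that $\ball_{\delta/2}(x_n)\subseteq\ball_\delta(x)$ for all large $n$; since $(F_ng_n)$ is again a left F\o lner sequence, this yields $\Diam_{\hat{\mathcal{F}}}(\pi^{-1}(y))\leq \Diam_{\mathcal{F}'}(\ball_\delta(x))\leq S$, and taking the supremum over $\hat{\mathcal{F}}$ finishes. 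Note that the varying centers $x_n$ are tamed not by uniformity in $x$ but by the enlargement $\ball_{\delta/2}(x_n)\subseteq\ball_\delta(x)$ at a single limit point. Your scheme can be repaired in the same spirit (bound your F\o lner limit of $\Phi$ at radius $\delta/2$ by $\sup_x\Diam(\ball_\delta(x))$ via convergent centers), but that repair \emph{is} the paper's argument, and the detour through the envelope $\Phi$ then buys nothing.
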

\begin{proof}
    Let $\hat{\mathcal{F}}=(\hat{F}_n)$ be a left F\o lner sequence.
    Consider a subsequence $\mathcal{F}=(F_n)$ such that $\Diam_{\hat{\mathcal{F}}}(\pi^{-1}(y))=\lim_n F_n^*\diam(\pi^{-1}(y))$ and a sequence $(\epsilon_n)$ in $(0,1)$ satisfying $\epsilon_n\to 0$. 
    From Lemma \ref{lem:meanDiameterFibresVsBallsForBPfactorMapPreparationsIII} we observe the existence of sequences $(g_n)$ in $G$ and $(x_n)$ in $X$ such that 
    \[F_n^*\diam(\pi^{-1}(y))\leq (F_ng_n)^*\diam(\ball_{\delta/2}(x_n))+\epsilon_n.\]
    Since $X$ is compact (and by possibly considering a further subsequence of $\mathcal{F}$) we assume w.l.o.g.\ that $(x_n)$ converges to some $x\in X$. 
    For large $n$ we have $\ball_{\delta/2}(x_n)\subseteq \ball_\delta(x)$ and hence
    \begin{align*}
        F_n^*\diam(\pi^{-1}(y))-\epsilon_n
        \leq (F_ng_n)^*\diam(\ball_{\delta/2}(x_n))
        \leq (F_ng_n)^*\diam(\ball_{\delta}(x)). 
    \end{align*}
    Since $\mathcal{F}':=(F_n g_n)$ is a left F\o lner sequence it follows that
    \begin{align*}
        \Diam_{\hat{\mathcal{F}}}(\pi^{-1}(y))
        =\Diam_\mathcal{F}(\pi^{-1}(y))
        \leq \Diam_{\mathcal{F}'}(\ball_\delta(x))
        \leq \sup_{x\in X}\Diam(\ball_\delta(x)).
    \end{align*}
    Taking the supremum over all left F\o lner sequences $\hat{\mathcal{F}}$ yields the statement. 
\end{proof}

\begin{proposition}
\label{pro:diamMeanEquicontinuousActionhasRegularMEF}
    Let $X$ be a diam-mean equicontinuous action. Then the factor map $\pi$ onto the maximal equicontinuous factor of $X$ is regular. 
\end{proposition}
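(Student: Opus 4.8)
The plan is to verify regularity through its equivalent formulation as diam-mean proximality. By Theorem \ref{the:regularFactorMapCharacterization}, $\pi$ is regular if and only if $\Diam(\pi^{-1}(y)) = 0$ for every $y$ in the maximal equicontinuous factor. So the entire task reduces to showing that every fibre of $\pi$ has vanishing mean diameter, and I would aim to deduce this by comparing fibres to balls.

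First I would establish that $\pi$ is Banach proximal. Since any diam-mean equicontinuous action is mean equicontinuous (as noted at the start of Section \ref{sec:regularityVSdiamMeanEquicontinuity}), Proposition \ref{pro:meanEquicontinuousImpliesBanachProximalToMEF}(ii) guarantees that the factor map onto the maximal equicontinuous factor is Banach proximal; in particular $\pi$ is Banach proximal. This is precisely the hypothesis that unlocks Lemma \ref{lem:meanDiameterFibresVsBallsForBPfactorMap}, which for any fixed $\delta > 0$ bounds the mean diameter of each fibre by the supremum of mean diameters of $\delta$-balls, namely $\Diam(\pi^{-1}(y)) \leq \sup_{x \in X} \Diam(\ball_\delta(x))$ for all $y$.

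Next I would invoke the global reformulation of diam-mean equicontinuity recorded in Remark \ref{rem:diamMeanEquicontinuityGlobalization}: for every $\epsilon > 0$ there exists $\delta > 0$ such that $\Diam(\ball_\delta(x)) \leq \epsilon$ uniformly in $x \in X$, so that $\sup_{x \in X} \Diam(\ball_\delta(x)) \leq \epsilon$. Combining this with the displayed inequality from the previous step yields $\Diam(\pi^{-1}(y)) \leq \epsilon$ for every $y$. Since $\epsilon > 0$ was arbitrary, this forces $\Diam(\pi^{-1}(y)) = 0$ for all $y$, i.e.\ $\pi$ is diam-mean proximal, whence regular by Theorem \ref{the:regularFactorMapCharacterization}.

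The proof itself is a short assembly, as all the analytic substance has been carried by the preceding results. The genuinely delicate ingredient is Lemma \ref{lem:meanDiameterFibresVsBallsForBPfactorMap}, resting on Lemmas \ref{lem:meanDiameterFibresVsBallsForBPfactorMapPreparationsI}--\ref{lem:meanDiameterFibresVsBallsForBPfactorMapPreparationsIII}, which transfers control of the mean diameter from balls to fibres. The main obstacle to keep in mind is that this transfer is available \emph{only} through Banach proximality: a fibre $\pi^{-1}(y)$ need not be a neighbourhood of any point, so there is no a priori reason for it to inherit the uniform smallness enjoyed by the balls $\ball_\delta(x)$. It is exactly the Banach proximal structure — used to replace a fibre by a suitable translate of a small ball via a finite approximating set — that bridges this gap, so the order of the argument (first secure Banach proximality, then apply the comparison lemma, and only finally use the uniform ball estimate) is essential.
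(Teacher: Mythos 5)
Your proposal is correct and follows essentially the same route as the paper: establish Banach proximality via mean equicontinuity and Proposition \ref{pro:meanEquicontinuousImpliesBanachProximalToMEF}, apply Lemma \ref{lem:meanDiameterFibresVsBallsForBPfactorMap} to bound $\Diam(\pi^{-1}(y))$ by $\sup_{x\in X}\Diam(\ball_\delta(x))$, and conclude via the uniform ball estimate and Theorem \ref{the:regularFactorMapCharacterization}. Your only (harmless) addition is the explicit citation of Remark \ref{rem:diamMeanEquicontinuityGlobalization} for the uniform choice of $\delta$, which the paper uses implicitly.
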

\begin{proof}
    Consider a diam-mean equicontinuous action $X$ and denote $\pi\colon X\to Y$ for the factor map onto the maximal equicontinuous factor. 
    Since $X$ is mean equicontinuous we know from Proposition \ref{pro:meanEquicontinuousImpliesBanachProximalToMEF} that $\pi$ is Banach proximal. 
    
    In order to show that $\pi$ is regular, we use Theorem \ref{the:regularFactorMapCharacterization} and show that for $y\in Y$ we have $\Diam(\pi^{-1}(y))=0$.     
    Consider $y\in Y$ and $\epsilon>0$. 
    Let $\delta>0$ be such that for all $x\in X$ we have $\Diam(\ball_\delta(x))\leq \epsilon$. 
    From Lemma \ref{lem:meanDiameterFibresVsBallsForBPfactorMap} we observe that 
    $\Diam(\pi^{-1}(y))\leq \sup_{x\in X}\Diam(\ball_\delta(x))\leq\epsilon$. 
    Since $\epsilon>0$ was arbitrary, this shows $\Diam(\pi^{-1}(y))=0$. 
\end{proof}

\subsection{Lifting of diam-mean equicontinuity along regular factor maps}
\label{subsec:liftingDiamMeanEquicontinuityAlongRegular}

\begin{proposition}
    \label{pro:regularLifting}
	Let $\pi\colon X\to Y$ be a regular factor map between actions of $G$. 
	Let $\mathcal{F}=(F_n)$ be a left (or right) F\o lner sequence in $G$. 
	If $y$ is a $\mathcal{F}$-diam-mean equicontinuous point of $Y$ that is $\mathcal{F}$-generic (for some invariant Borel probability measure), then any $x\in \pi^{-1}(y)$ is $\mathcal{F}$-diam-mean equicontinuous in $X$. 
\end{proposition}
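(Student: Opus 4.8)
The plan is to verify the density reformulation of $\mathcal{F}$-diam-mean equicontinuity from Proposition \ref{pro:characterizationDiamMeanEquicontinuityViaDensities}(ii): fixing $x\in\pi^{-1}(y)$ and $\epsilon>0$, I would produce a $\delta>0$ with $\upperAdens_\mathcal{F}\big(\{g\in G;\,\diam(g.\ball_\delta(x))>\epsilon\}\big)\le\epsilon$, taking the neighbourhood to be a small ball $\ball_\delta(x)$. The idea is to split the bad set according to the two mechanisms that make $g.\ball_\delta(x)$ large: separation in the base $Y$, and separation inside a fibre. Writing $B_\delta:=\pi(\ball_\delta(x))\in\hyper(Y)$ (a compact set containing $y$ with $B_\delta\downarrow\{y\}$ as $\delta\downarrow0$) and fixing a threshold $\rho>0$, I claim
\[\{g;\,\diam(g.\ball_\delta(x))>\epsilon\}\subseteq\{g;\,\diam(g.B_\delta)>\rho\}\cup\{g;\,\Phi_\rho(g.y)>\epsilon\},\]
where $\Phi_\rho(y'):=\diam\big(\pi^{-1}(\ball_\rho(y'))\big)$ is the diameter of the $\rho$-tube over $y'$. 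Indeed, if $g$ avoids the first set, then any $x_1,x_2\in\ball_\delta(x)$ satisfy $d(\pi(g.x_i),g.y)\le\diam(g.B_\delta)\le\rho$, so $g.x_1,g.x_2\in\pi^{-1}(\ball_\rho(g.y))$ and hence $d(g.x_1,g.x_2)\le\Phi_\rho(g.y)$.

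For the base term I would use that $y$ is $\mathcal{F}$-diam-mean equicontinuous in $Y$: given $\rho$, choose a closed neighbourhood $B^*$ of $y$ with $\Diam_\mathcal{F}(B^*)\le\rho^2$; since the nested compacta $B_\delta$ shrink to $\{y\}\subseteq\operatorname{int}(B^*)$, we have $B_\delta\subseteq B^*$ for all small $\delta$, whence $\Diam_\mathcal{F}(B_\delta)\le\rho^2$ by monotonicity of $\average_\mathcal{F}$ (Proposition \ref{pro:averageBasicProperties}). Lemma \ref{lem:averagesDensityEstimatesGeneral}(i), applied to $\diam\in C^+(\hyper(Y))$ at the point $B_\delta$ with threshold $\rho$, then yields $\upperAdens_\mathcal{F}(\{g;\,\diam(g.B_\delta)>\rho\})\le\rho$.

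The crux is the fibre term, and this is where both the genericity of $y$ and the regularity of $\pi$ enter. First, $\Phi_\rho$ is bounded and upper semicontinuous on $Y$: if $y_k'\to y'$ and pairs realizing $\Phi_\rho(y_k')$ converge, their limits lie in $\pi^{-1}(\ball_\rho(y'))$ by continuity of $\pi$. Since $y$ is $\mathcal{F}$-generic for some $\nu$, which is invariant as a cluster point of the F\o lner averages, the upper semicontinuous form of weak\textsuperscript{*} convergence gives $\limsup_n\tfrac{1}{\haar{F_n}}\int_{F_n}\Phi_\rho(g.y)\dd{g}\le\nu(\Phi_\rho)$, so the Markov estimate (exactly as in Lemma \ref{lem:averagesDensityEstimatesGeneral}(i)) produces $\upperAdens_\mathcal{F}(\{g;\,\Phi_\rho(g.y)>\epsilon\})\le\nu(\Phi_\rho)/\epsilon$. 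It then remains to make $\nu(\Phi_\rho)$ small. As $\rho\downarrow0$ the tubes $\pi^{-1}(\ball_\rho(y'))$ decrease to the fibre $\pi^{-1}(y')$ in the Hausdorff metric, so $\Phi_\rho(y')\downarrow\diam(\pi^{-1}(y'))$ pointwise; and regularity of $\pi$ forces $\diam(\pi^{-1}(y'))=0$ for $\nu$-almost every $y'$, since $\nu(Y_0)=1$ by Remark \ref{rem:regularityViaY} and every $y'\in Y_0$ has a singleton fibre. Monotone convergence then gives $\nu(\Phi_\rho)\to0$.

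Finally I would assemble the budget in the right order: given $\epsilon$, first pick $\rho\le\epsilon/2$ small enough that $\nu(\Phi_\rho)\le\epsilon^2/2$, and then pick $\delta$ small enough that $B_\delta\subseteq B^*$. Subadditivity of $\upperAdens_\mathcal{F}$ over the two-set union yields $\upperAdens_\mathcal{F}(\{g;\,\diam(g.\ball_\delta(x))>\epsilon\})\le\rho+\nu(\Phi_\rho)/\epsilon\le\epsilon$, which is precisely the condition of Proposition \ref{pro:characterizationDiamMeanEquicontinuityViaDensities}(ii). The main obstacle is exactly the fibre term: topologically, fibres may fail to vary lower semicontinuously, so no uniform bound on $\Phi_\rho$ is available; the resolution is to pass to the invariant measure $\nu$, where regularity confines the bad behaviour to a $\nu$-null set, and then to transfer this back to a density statement along the orbit of $y$ via $\mathcal{F}$-genericity.
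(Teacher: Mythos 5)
Your proof is correct, and at the crucial step it takes a genuinely different route from the paper's. Both arguments share the same skeleton: reduce to the density characterization of Proposition \ref{pro:characterizationDiamMeanEquicontinuityViaDensities}(ii) and split the bad set of times into ``the ball around $y$ spreads in the base'' (controlled by the $\mathcal{F}$-diam-mean equicontinuity of $y$) and ``$g.y$ lands in a bad region'' (controlled by $\mathcal{F}$-genericity together with regularity). The paper implements the second part topologically: by inner regularity of $\nu$ it picks a compact $A\subseteq Y_0$ with $\nu(A)\geq 1-\epsilon/2$, covers it by finitely many balls $B_{\delta_{y'}}(y')$ around injectivity points, where each radius comes from the closedness of $\pi$ via $\pi^{-1}(\ball_{2\delta_{y'}}(y'))\subseteq \ball_{\epsilon/2}(\pi^{-1}(y'))$, feeds the minimal radius $\delta'$ back into the diam-mean equicontinuity of $y$, and uses genericity only through the portmanteau bound for the indicator of the closed set $Y\setminus O$. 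You instead package the tube estimate into the single upper semicontinuous function $\Phi_\rho(y')=\diam\bigl(\pi^{-1}(\ball_\rho(y'))\bigr)$, derive $\nu(\Phi_\rho)\downarrow 0$ from regularity by dominated convergence, and convert this into a density bound via the u.s.c.\ portmanteau inequality and a Markov estimate. Your version buys a cleaner, more measure-theoretic argument that avoids the finite-cover bookkeeping and the paper's w.l.o.g.\ metric assumption $d(x_1,x_2)\geq d(\pi(x_1),\pi(x_2))$, since your inclusion works with the image sets $\pi(\ball_\delta(x))$ directly; the paper's version stays entirely elementary, needing weak$^*$ convergence only against indicators of closed sets, and makes visible exactly where the closedness of factor maps enters --- which in your proof is hidden inside the upper semicontinuity of $\Phi_\rho$. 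Two cosmetic points: the bound $\Diam_\mathcal{F}(B_\delta)\leq \Diam_\mathcal{F}(B^*)$ is not the function-monotonicity of Proposition \ref{pro:averageBasicProperties} but the immediate pointwise inequality $\diam(g.B_\delta)\leq \diam(g.B^*)$; and, exactly like the paper's proof, you rely on the generic measure $\nu$ being invariant so that Remark \ref{rem:regularityViaY} applies --- for left F\o lner sequences this is the Krylov--Bogolyubov remark of Section \ref{sec:prelims}, and the right-F\o lner case rests on the same implicit convention as in the paper.
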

\begin{proof}
Consider $x\in \pi^{-1}(y)$. 
Furthermore, denote by $\nu$ the measure for which $y$ is $\mathcal{F}$-generic.
Recall that the notion of a diam-mean equicontinuous point does not depend on the choice of a continuous metric of the respective action. We thus assume w.l.o.g.\ that the metrics on $X$ and $Y$ satisfy $d(x_1,x_2)\geq d(\pi(x_1),\pi(x_2))$ for all $x_1,x_2\in X$. 
Denote $X_0$ for the set of injectivity points of $\pi$ and $Y_0:=\pi(X_0)$. Since $\pi$ is regular, we observe that $\nu(Y_0)=1$. 

Let $\epsilon>0$. 
For $\delta>0$ we denote 
\[G_x^\delta:=\{g\in G;\, \diam(g.\ball_{\delta}(x))>\epsilon\}.\] 
Considering the characterization presented in Proposition \ref{pro:characterizationDiamMeanEquicontinuityViaDensities} it suffices to show that 
there exists $\delta>0$ with
$\upperAdens_\mathcal{F}(G_x^\delta)\leq \epsilon$. 

Since any factor map is closed we know that for $y'\in Y$ there exists 
$\delta_{y'}\in (0,\epsilon/2)$ such that 
$\pi^{-1}(\ball_{2\delta_{y'}}(y'))\subseteq \ball_{\epsilon/2}(\pi^{-1}(y'))$. 
For details see \cite[Theorem~17.7(1)]{alibrantis2006infinite}. 
Exploring the regularity of the Borel measure $\nu$ we choose a compact subset $A\subseteq Y_0$ with $\nu(A)\geq 1-\epsilon/2$.
Furthermore, we choose a finite subset $E\subseteq A$ satisfying 
$A\subseteq \bigcup_{y'\in E}B_{\delta_{y'}}(y')=:O$ 
and denote $\delta':=\min_{y'\in E}\delta_{y'}$. 
Since $y$ is assumed to be $\mathcal{F}$-diam-mean equicontinuous the characterization presented in Proposition \ref{pro:characterizationDiamMeanEquicontinuityViaDensities} allows to observe that there exists $\delta>0$ such that 
\[G_y^\delta:=\{g\in G;\, \diam(g.\ball_{\delta}(y))>\delta'\}\] 
satisfies $\upperAdens_\mathcal{F}(G_y^\delta)\leq \delta'$. 

We will next show that, with this choice of $\delta$, we also have $\upperAdens_\mathcal{F}(G_x^\delta)\leq \epsilon$. This will be achieved by considering
\[G':=\{g\in G;\, g.y\notin O\}\]
and showing (i) $\upperAdens_\mathcal{F}(G'\cup G_y^\delta)\leq \epsilon$ and (ii) $G_x^\delta\subseteq G'\cup G_y^\delta$. 

{(i):}
    Clearly $O=\bigcup_{y'\in E}B_{\delta_{y'}}(y')$ is open and satisfies $\nu(O)\geq \nu(A)\geq 1-\epsilon/2$.
    Thus, $A':=Y\setminus O$ is closed and satisfies $\nu(A')\leq \epsilon/2$. 
    We thus observe from $(F_n)_*\delta_y\to \nu$ that 
    \begin{align*}
        \upperAdens_\mathcal{F}(G')=\limsup_{n\to \infty}\frac{\haar{F_n\cap G'}}{\haar{F_n}}
        =\limsup_{n\to \infty} (F_n)_*\delta_y(A')
        \leq \nu(A')\leq \frac{\epsilon}{2}.    
    \end{align*}
    Since $\delta'=\min_{y'\in E}\delta_{y'}\leq \epsilon/2$ we conclude 
    \begin{align*}
        \upperAdens_\mathcal{F}(G'\cup G_y^\delta)
        \leq \upperAdens_\mathcal{F}(G') + \upperAdens_\mathcal{F}(G_y^\delta)
        \leq \frac{\epsilon}{2} + \delta'
        \leq \epsilon.
    \end{align*}

{(ii):}
    Consider $g\in G_x^\delta$ and assume that $g\notin G'\cup G_y^\delta$. 
    From $g\notin G_y^\delta$ we observe 
    $\diam(g.\ball_\delta(y))\leq \delta'$
    and hence 
    $g.\ball_\delta(\pi(x))=g.\ball_\delta(y)\subseteq \ball_{\delta'}(g.y)$.
    Furthermore, since $g\notin G'$ we have $g.y\in O=\bigcup_{y'\in E}B_{\delta_{y'}}(y')$. 
    Thus, there exists $y'\in E$ with $d(g.y,y')< \delta_{y'}$.
    For such $y'$ we observe 
    $\ball_{\delta'}(g.y)
    \subseteq \ball_{\delta_{y'}}(g.y)
    \subseteq \ball_{2\delta_{y'}}(y')$. 
    Using our particular choice of the metrics on $X$ and $Y$ we compute 
    \[\pi(g.\ball_\delta(x))
    = g.\pi(\ball_\delta(x))
    \subseteq g.\ball_\delta(\pi(x))
    \subseteq \ball_{\delta'}(g.y)
    \subseteq \ball_{2\delta_{y'}}(y') 
    \]
    and hence the choice of $\delta_{y'}$ yields 
    \begin{align*}
        g.\ball_\delta(x)
        \subseteq \pi^{-1}(\pi(g.\ball_\delta(x)))
        \subseteq \pi^{-1}(\ball_{2\delta_{y'}}(y'))
        \subseteq \ball_{\epsilon/2}(\pi^{-1}(y')). 
    \end{align*}
    Since $y'\in E\subseteq A\subseteq Y_0$, we observe that $\pi^{-1}(y')$ is a singleton and hence that $\diam(g.\ball_\delta(x))\leq \epsilon$. 
    This shows $g\notin G_x^\delta$, a contradiction.
\end{proof}

We observe Theorem \ref{the:INTROcharacterizationDMEvsRegular} from Proposition \ref{pro:diamMeanEquicontinuousActionhasRegularMEF} and the following theorem. 

\begin{theorem}
\label{the:liftingDiamMeanEquicontinuityFromEquicontinuousAction}
    Let $\pi\colon X\to Y$ be a regular factor map. 
\begin{itemize}
    \item[(i)] If $Y$ is diam-mean equicontinuous, then $X$ is diam-mean equicontinuous. 
    \item[(ii)] If $Y$ is equicontinuous, then $X$ is diam-mean equicontinuous and $\pi$ is the factor map onto the maximal equicontinuous factor of $X$. 
\end{itemize}
\end{theorem}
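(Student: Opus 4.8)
The plan is to prove both parts by combining the lifting result for individual points (Proposition~\ref{pro:regularLifting}) with the genericity results from the theory of mean equicontinuity. For part~(i), suppose $Y$ is diam-mean equicontinuous. By Proposition~\ref{pro:characterizationDiamMeanEquicontinuityViaAlFolner}(ii) it suffices to show that $X$ is $\mathcal{F}$-diam-mean equicontinuous for every left F\o lner sequence $\mathcal{F}$, and by the point-characterization it suffices to show every $x\in X$ is $\mathcal{F}$-diam-mean equicontinuous. Fix such an $\mathcal{F}$ and an $x\in X$, and set $y:=\pi(x)$. Since $Y$ is diam-mean equicontinuous it is in particular mean equicontinuous, so by Proposition~\ref{pro:meanEquicontinuityResults}(ii) every point of $Y$ is $\mathcal{F}$-generic; in particular $y$ is $\mathcal{F}$-generic. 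Moreover $y$ is $\mathcal{F}$-diam-mean equicontinuous, because a diam-mean equicontinuous point is $\mathcal{F}$-diam-mean equicontinuous for every left F\o lner sequence. Now Proposition~\ref{pro:regularLifting} applies directly and yields that $x$ is $\mathcal{F}$-diam-mean equicontinuous in $X$, as desired.

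For part~(ii), the diam-mean equicontinuity of $X$ is immediate from part~(i), since every equicontinuous action is diam-mean equicontinuous. It then remains to identify $\pi$ as the factor map onto the maximal equicontinuous factor. The plan here is to invoke Proposition~\ref{pro:meanEquicontinuousImpliesBanachProximalToMEF}(i): it suffices to show that $\pi$ is Banach proximal, since $Y$ is equicontinuous. But a regular factor map is Banach proximal --- this is recorded immediately after Remark~\ref{rem:descriptionD} as a consequence of Theorem~\ref{the:regularFactorMapCharacterization}. Hence $\pi$ is a Banach proximal factor map onto an equicontinuous action, and Proposition~\ref{pro:meanEquicontinuousImpliesBanachProximalToMEF}(i) identifies it as the factor map onto the maximal equicontinuous factor of $X$.

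I expect the main subtlety to lie entirely in verifying the hypotheses of Proposition~\ref{pro:regularLifting} for \emph{every} left F\o lner sequence $\mathcal{F}$ simultaneously, rather than for a single well-chosen one. The key enabling facts are that mean equicontinuity of $Y$ forces $\mathcal{F}$-genericity of all points along all left F\o lner sequences (Proposition~\ref{pro:meanEquicontinuityResults}(ii)), and that a (global) diam-mean equicontinuous point is automatically $\mathcal{F}$-diam-mean equicontinuous for each $\mathcal{F}$. Without the reduction afforded by Proposition~\ref{pro:characterizationDiamMeanEquicontinuityViaAlFolner}, one would be tempted to lift along a single F\o lner sequence and then try to upgrade, which is exactly the delicate direction. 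Routing the argument through ``$\mathcal{F}$-diam-mean equicontinuity for all $\mathcal{F}$'' sidesteps this obstacle cleanly, and the remaining steps are bookkeeping.
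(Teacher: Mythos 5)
Your proposal is correct and follows essentially the same route as the paper's proof: part (i) via Proposition~\ref{pro:meanEquicontinuityResults}(ii) for $\mathcal{F}$-genericity, Proposition~\ref{pro:regularLifting} for the pointwise lift, and Proposition~\ref{pro:characterizationDiamMeanEquicontinuityViaAlFolner} to pass from all-$\mathcal{F}$ to diam-mean equicontinuity; part (ii) via ``regular implies Banach proximal'' and Proposition~\ref{pro:meanEquicontinuousImpliesBanachProximalToMEF}(i). You even spell out the two hypotheses of Proposition~\ref{pro:regularLifting} more explicitly than the paper does, but the argument is the same.
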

\begin{proof}
    Since any equicontinuous action is diam-mean equicontinuous, we obtain (ii) from combining (i) with Proposition \ref{pro:meanEquicontinuousImpliesBanachProximalToMEF}. 

    To show (i) let $x\in X$ and $\mathcal{F}$ be a left F\o lner sequence. 
    From Proposition \ref{pro:meanEquicontinuityResults} we observe that $\pi(x)$ is $\mathcal{F}$-generic. Since $\pi(x)$ is assumed to be diam-mean equicontinuous Proposition \ref{pro:regularLifting} yields that $x$ is $\mathcal{F}$-diam-mean equicontinuous. This shows that $x$ is $\mathcal{F}$-diam-mean equicontinuous w.r.t.\ any left F\o lner sequence in $G$. From Proposition \ref{pro:characterizationDiamMeanEquicontinuityViaAlFolner} we observe that $x$ is diam-mean equicontinuous. 
\end{proof}

\begin{corollary}
    Let $X$ be an action and $\pi$ be the factor map onto the maximal equicontinuous factor. Then $X$ is diam-mean equicontinuous if and only if $\hyper_\pi(X)$ is diam-mean equicontinuous. 
\end{corollary}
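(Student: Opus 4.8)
The plan is to assemble the claim from three results already proved: the characterization of diam-mean equicontinuity via regularity of the maximal equicontinuous factor map (Theorem \ref{the:INTROcharacterizationDMEvsRegular}), the equivalence (i)$\Leftrightarrow$(iv) of Theorem \ref{the:regularFactorMapCharacterization}, which ties regularity of $\pi$ to regularity of $\pi_\hyper\colon \hyper_\pi(X)\to Y$ (with $Y=X_{\operatorname{eq}}$), and the lifting statement Theorem \ref{the:liftingDiamMeanEquicontinuityFromEquicontinuousAction}(ii). The two implications then go through essentially opposite chains, and only the converse requires a small hands-on argument.

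For the forward direction I would argue as follows. Suppose $X$ is diam-mean equicontinuous. By Theorem \ref{the:INTROcharacterizationDMEvsRegular} the factor map $\pi\colon X\to Y$ onto the maximal equicontinuous factor is regular. The equivalence (i)$\Leftrightarrow$(iv) of Theorem \ref{the:regularFactorMapCharacterization} then shows that $\pi_\hyper\colon \hyper_\pi(X)\to Y$ is regular as well. Since $Y$ is equicontinuous, Theorem \ref{the:liftingDiamMeanEquicontinuityFromEquicontinuousAction}(ii), applied to the regular factor map $\pi_\hyper$ (in the role of the ``$\pi$'' of that theorem), immediately yields that $\hyper_\pi(X)$ is diam-mean equicontinuous.

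For the converse I would exploit that $X$ is conjugate to $\single(X)$ via $x\mapsto\{x\}$ and that $\single(X)$ is a closed invariant subset, hence a subaction, of $\hyper_\pi(X)$. The key observation is that diam-mean equicontinuity passes to subactions: if $W\subseteq Z$ is a subaction and $x\in W$, then for a closed neighbourhood $A$ of $x$ in $Z$ with $\Diam_Z(A)\leq \epsilon$ the set $A\cap W$ is a closed neighbourhood of $x$ in $W$; since $g.(A\cap W)\subseteq g.A$ for every $g\in G$ and the mean diameter of a subset of $W$ agrees whether computed in $W$ or in $Z$ (same metric, same action), monotonicity of $\average$ (Proposition \ref{pro:averageBasicProperties}) gives $\Diam_W(A\cap W)\leq \Diam_Z(A)\leq \epsilon$. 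Applying this with $Z=\hyper_\pi(X)$ and $W=\single(X)$ shows that $\single(X)$ is diam-mean equicontinuous, and transporting along the conjugacy $x\mapsto\{x\}$ (legitimate because diam-mean equicontinuity is independent of the metric by Proposition \ref{pro:FdiamMeanEquicontinuousPointGeneratorIndependence}, hence a conjugacy invariant) yields that $X$ is diam-mean equicontinuous.

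I expect no difficulty in the forward direction, which is pure bookkeeping through the cited theorems. The main (and only minor) obstacle is the subaction step in the converse: one must justify carefully that restricting a closed neighbourhood to the subaction does not increase the mean diameter, being careful not to conflate the hyperspace action on $\hyper(Z)$ with that on $\hyper(W)$. This reduces to the pointwise inequality $\diam(g.(A\cap W))\leq \diam(g.A)$ together with monotonicity of $\average$, both of which are routine.
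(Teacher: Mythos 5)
Your proof is correct and takes essentially the same route as the paper: regularity of $\pi$ (Proposition \ref{pro:diamMeanEquicontinuousActionhasRegularMEF}, which is the relevant half of Theorem \ref{the:INTROcharacterizationDMEvsRegular}), transfer to $\pi_\hyper$ via Theorem \ref{the:regularFactorMapCharacterization}, and then the lift of Theorem \ref{the:liftingDiamMeanEquicontinuityFromEquicontinuousAction}. Your carefully spelled-out subaction step in the converse is exactly what the paper compresses into its remark that diam-mean equicontinuity passes to subactions, applied through the conjugacy $X\cong\single(X)\subseteq\hyper_\pi(X)$.
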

\begin{proof}
    Observe that $X$ is conjugated to the subaction $\single(X)\subseteq \hyper_\pi(X)$ and hence the diam-mean equicontinuity of $\hyper_\pi(X)$ implies the diam-mean equicontinuity of $X$. 
    To show the converse assume that $X$ is diam-mean equicontinuous. 
    From Proposition \ref{pro:diamMeanEquicontinuousActionhasRegularMEF} we observe that 
    $\pi\colon X\to X_{\operatorname{eq}}$ is regular and it follows from Theorem \ref{the:regularFactorMapCharacterization} that $\pi_\hyper\colon \hyper_\pi(X)\to X_{\operatorname{eq}}$ is also regular. 
    Since $X_{\operatorname{eq}}$ is equicontinuous we conclude that $\hyper_\pi(X)$ is diam-mean equicontinuous from Theorem \ref{the:liftingDiamMeanEquicontinuityFromEquicontinuousAction}. 
\end{proof}

An action $X$ is called \emph{fully supported} if $X$ equals its maximal support. The maximal support is always a closed invariant and non-empty subset. Thus, any minimal action is fully supported. 
Whenever $X$ is a diam-mean equicontinuous action that is not equicontinuous, then $\single(X)$ is a proper subset of the diam-mean equicontinuous action $\hyper_\pi(X)$, where $\pi\colon X\to X_{\operatorname{eq}}$ denotes the factor map onto the maximal equicontinuous factor of $X$. With the arguments above it follows that $\pi_\hyper\colon \hyper_\pi(X)\to X_{\operatorname{eq}}$ is regular and Theorem $\ref{the:regularFactorMapCharacterization}$ yields that the maximal support of $\hyper_\pi(X)$ is contained in $\single(X)$. We thus observe the following. 

\begin{remark}
    A diam-mean equicontinuous action does not need to be fully supported. 
\end{remark}

Proposition \ref{pro:regularLifting} also allows to say something about regular extensions of $\mathcal{F}$-diam-mean equicontinuous actions. 
An action $X$ is called \emph{pointwise uniquely ergodic} if $\overline{O(x)}$ is uniquely ergodic for any $x\in X$. In particular, any uniquely ergodic action is pointwise uniquely ergodic. 

\begin{proposition}
\label{pro:regularLiftingFdiamME}
    Let $\pi\colon X\to Y$ be a regular factor map and consider a left F\o lner sequence $\mathcal{F}$. 
    Assume that $Y$ is $\mathcal{F}$-diam-mean equicontinuous. 
    \begin{itemize}
        \item[(i)] If $Y$ is fully supported, then $X$ is $\mathcal{F}$-diam-mean equicontinuous. 
        \item[(ii)] If $Y$ is pointwise uniquely ergodic, then $X$ is $\mathcal{F}$-diam-mean equicontinuous. 
    \end{itemize}
\end{proposition}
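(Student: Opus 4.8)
The plan is to reduce both parts to a single application of Proposition~\ref{pro:regularLifting}. Since $\pi$ is surjective, every $x\in X$ lies in $\pi^{-1}(y)$ for some $y\in Y$, and since $Y$ is assumed $\mathcal{F}$-diam-mean equicontinuous, each such $y$ is an $\mathcal{F}$-diam-mean equicontinuous point of $Y$. Proposition~\ref{pro:regularLifting} then yields that $x$ is $\mathcal{F}$-diam-mean equicontinuous \emph{provided} $y$ is additionally $\mathcal{F}$-generic. Thus the whole statement collapses to one missing ingredient: under hypothesis (i) or (ii), every point of $Y$ is $\mathcal{F}$-generic. Once this is established, running the reduction over all $y\in Y$ shows every $x\in X$ to be $\mathcal{F}$-diam-mean equicontinuous, i.e.\ $X$ is $\mathcal{F}$-diam-mean equicontinuous.

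For (i), I would first invoke the observation (recorded just after the definition of $\mathcal{F}$-mean equicontinuity) that any $\mathcal{F}$-diam-mean equicontinuous action is $\mathcal{F}$-mean equicontinuous; hence $Y$ is $\mathcal{F}$-mean equicontinuous. Proposition~\ref{pro:meanEquicontinuityResults}(i) then guarantees that every point in the maximal support of $Y$ is $\mathcal{F}$-generic. As $Y$ is fully supported, its maximal support is all of $Y$, so every $y\in Y$ is $\mathcal{F}$-generic and the reduction finishes part (i).

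For (ii), I would fix $y\in Y$ and pass to the subaction on $\overline{O(y)}$, which is uniquely ergodic by hypothesis. By the preliminary remark that any point of a uniquely ergodic action is $\mathcal{F}$-generic with respect to every left F\o lner sequence, $y$ is $\mathcal{F}$-generic within $\overline{O(y)}$. The only small verification is that this transfers to $Y$: the empirical measures $(F_n)_*\delta_y$ are all supported on the invariant compact set $\overline{O(y)}$, so weak* convergence in $\mathcal{M}(\overline{O(y)})$ coincides with weak* convergence in $\mathcal{M}(Y)$, and $\mathcal{F}$-genericity in the subaction is the same as $\mathcal{F}$-genericity in $Y$. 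Hence every $y\in Y$ is again $\mathcal{F}$-generic, completing part (ii).

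The crux in both cases is exactly this genericity of every point of $Y$; once it is in hand, Proposition~\ref{pro:regularLifting} does all the work, so there is no real analytic obstacle beyond the already-proved lifting result. The two hypotheses are precisely the two natural mechanisms that force genericity—full support together with $\mathcal{F}$-mean equicontinuity through Proposition~\ref{pro:meanEquicontinuityResults}, and unique ergodicity of orbit closures through the Krylov–Bogolyubov considerations in the preliminaries. The only point demanding care is the transfer of genericity from the subaction $\overline{O(y)}$ back to $Y$ in part (ii), which I expect to be the most delicate (though still routine) step.
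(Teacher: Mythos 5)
Your proposal is correct and follows essentially the same route as the paper: both parts reduce to showing every point of $Y$ is $\mathcal{F}$-generic and then invoking Proposition~\ref{pro:regularLifting}, with (i) via $\mathcal{F}$-mean equicontinuity and Proposition~\ref{pro:meanEquicontinuityResults}(i), and (ii) via unique ergodicity of orbit closures. Your explicit verification that genericity in the subaction $\overline{O(y)}$ transfers to $Y$ is a detail the paper leaves implicit, but it is the same argument.
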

\begin{proof}
(i): 
    Since $Y$ is $\mathcal{F}$-diam-mean equicontinuous it is $\mathcal{F}$-mean equicontinuous and Proposition \ref{pro:meanEquicontinuityResults} yields that any point is $\mathcal{F}$-generic. We thus observe the statement from Proposition \ref{pro:regularLifting}. 

(ii): 
    Since $Y$ is pointwise uniquely ergodic, any point in $Y$ is $\mathcal{F}$-generic. 
    Thus, the statement follows from Proposition \ref{pro:regularLifting}. 
\end{proof}

\begin{remark}
\label{rem:localBronstein}
    Recall that a factor map $\pi\colon X\to Y$ is called \emph{proximal} if all $x,x'\in X$ with $\pi(x)=\pi(x')$ satisfy $\inf_{g\in G}d(g.x,g.x')=0$. 
    It is straightforward to observe that any action $X$ for which the factor map $\pi_{\operatorname{eq}}$ onto the maximal equicontinuous factor is proximal satisfies the \emph{local Bronstein property} \cite[Definition 2.14]{haupt2025note}.    
    Since any Banach proximal factor map is proximal, we observe from Proposition \ref{pro:meanEquicontinuousImpliesBanachProximalToMEF} that all mean equicontinuous actions satisfy the local Bronstein property.

    Now recall that any diam-mean equicontinuous action is mean equicontinuous and that any regular factor map is Banach proximal (Theorem \ref{the:regularFactorMapCharacterization}). Thus, this observation allows to apply \cite[Theorem 5.2]{haupt2025note} in order to give an alternative proof for the statement of Theorem \ref{the:INTROcharacterizationDMEvsRegular} in the context of minimal actions of countable amenable groups. 
\end{remark}

\section{Stability properties of regular factor maps}
\label{sec:stabilityRegularFactorMaps}

\subsection{Product stability}

For a sequence $(\pi_n\colon X_n\to Y_n)_{n\in \mathbb{N}}$ of factor maps
we consider the factor map $\prod_{n=1}^\infty \pi_n\colon \prod_{n=1}^\infty X_n \to \prod_{n=1}^\infty Y_n$ 
given by
$(x_n)_n\mapsto (\pi_n(x_n))_n$. 
See \cite{cortez2026note} for further reading. 

\begin{proposition}
\label{pro:regularityProductStability}
    Let $(\pi_n\colon X_n\to Y_n)_{n\in \mathbb{N}}$ be a sequence of factor maps. 
    The factor map $\pi:= \prod_{n=1}^\infty \pi_n$ is regular if and only if $(\pi_n)_n$ is a sequence of regular factor maps. 
\end{proposition}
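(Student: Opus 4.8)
The plan is to argue directly from the measure-theoretic definition of regularity and to avoid the mean diameter entirely, since the two implications become transparent once one understands how the injectivity points of $\pi$ sit inside the product. The structural observation I would start from is that the coordinate projections $p_n\colon \prod_m X_m\to X_n$ are themselves factor maps of $G$ (they are continuous, surjective and equivariant because the action on the product is diagonal), and that fibres of $\pi$ split as $\pi^{-1}(\pi(x))=\prod_m \pi_m^{-1}(\pi_m(x_m))$. Writing $X_0$ for the set of injectivity points of $\pi$ and $(X_n)_0$ for that of $\pi_n$, this identity reads off at once to
\[
X_0=\bigcap_{n}p_n^{-1}\big((X_n)_0\big),
\]
and in particular $X_0\subseteq p_n^{-1}((X_n)_0)$ for every $n$. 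Both inclusions are elementary; the only point to keep in mind is that $X_0$ is measurable, which is guaranteed by the $G_\delta$ remark following the definition of regularity, applied to the factor map $\pi$ on the compact metric space $\prod_m X_m$.

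For the forward implication I would assume every $\pi_n$ is regular and fix an arbitrary $\mu\in\mathcal{M}_G(\prod_m X_m)$. Since each $p_n$ is a factor map, $(p_n)_*\mu\in\mathcal{M}_G(X_n)$, so regularity of $\pi_n$ yields $(p_n)_*\mu((X_n)_0)=1$, i.e.\ $\mu(p_n^{-1}((X_n)_0))=1$. As $X_0$ is the \emph{countable} intersection of the $\mu$-full sets $p_n^{-1}((X_n)_0)$, we get $\mu(X_0)=1$; since $\mu$ was arbitrary, $\pi$ is regular.

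For the reverse implication I would assume $\pi$ is regular, fix $n$ and an arbitrary $\mu_n\in\mathcal{M}_G(X_n)$, and aim to show $\mu_n((X_n)_0)=1$. This is the step I expect to be the main obstacle, because a general invariant measure on the product need \emph{not} be a product of invariant measures, so $\mu_n$ cannot simply be projected back up from regularity of $\pi$. I would instead build a lift by hand: using $\mathcal{M}_G(X_m)\neq\emptyset$ (which holds by amenability, as recorded in the preliminaries), choose $\nu_m\in\mathcal{M}_G(X_m)$ for each $m\neq n$ and set $\mu:=\mu_n\otimes\bigotimes_{m\neq n}\nu_m$. Because the action is diagonal, the pushforward of a product measure under $g$ is the product of the pushforwards, so invariance of the factors makes $\mu$ invariant, and moreover $(p_n)_*\mu=\mu_n$. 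Then regularity of $\pi$ gives $\mu(X_0)=1$, and combining this with the inclusion $X_0\subseteq p_n^{-1}((X_n)_0)$ yields
\[
1=\mu(X_0)\leq \mu\big(p_n^{-1}((X_n)_0)\big)=(p_n)_*\mu\big((X_n)_0\big)=\mu_n\big((X_n)_0\big),
\]
so $\mu_n((X_n)_0)=1$ and $\pi_n$ is regular.

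The remaining points are routine and I would only mention them: that $\prod_m X_m$ is again a compact metric $G$-space, that the $p_n$ are factor maps, and that $X_0$ is measurable. I would also note that the forward direction could alternatively be run through Theorem \ref{the:regularFactorMapCharacterization}: with a summable metric $d=\sum_m 2^{-m}d_m$ on the product one has $\diam(g.\pi^{-1}(y))=\sum_m 2^{-m}\diam(g.\pi_m^{-1}(y_m))$ on fibres, and subadditivity of $\limsup$ together with $\Diam_{\mathcal{F}}\leq\Diam$ reduces $\Diam(\pi^{-1}(y))=0$ to the fibrewise vanishing. However, the invariant-measure argument above is shorter and handles both directions symmetrically, so that is the route I would present.
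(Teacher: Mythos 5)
Your proof is correct, and one half of it takes a genuinely different route from the paper's. The direction ``all $\pi_n$ regular $\Rightarrow$ $\pi$ regular'' is essentially identical to the paper's argument: the injectivity points of $\pi$ form the countable intersection $\bigcap_n p_n^{-1}\bigl((X_n)_0\bigr)$ (the paper writes $\pi^{(n)}$ for your $p_n$), and each of these sets has full measure for any $\mu\in\mathcal{M}_G\bigl(\prod_m X_m\bigr)$ by pushing $\mu$ forward. For the converse, however, the paper does not lift measures at all: it invokes Theorem \ref{the:regularFactorMapCharacterization}, chooses the metrics so that the coordinate projection is $1$-Lipschitz (hence does not increase $\Diam$), observes that $\pi_m^{-1}(y)$ is the projection of a fibre $\pi^{-1}\bigl((y_n)_n\bigr)=\prod_n\pi_n^{-1}(y_n)$, and concludes $\Diam\bigl(\pi_m^{-1}(y)\bigr)\le\Diam\bigl(\pi^{-1}((y_n)_n)\bigr)=0$, so $\pi_m$ is diam-mean proximal and hence regular. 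Your alternative --- lifting an arbitrary $\mu_n\in\mathcal{M}_G(X_n)$ to the product measure $\mu:=\mu_n\otimes\bigotimes_{m\neq n}\nu_m$ with $\nu_m\in\mathcal{M}_G(X_m)$ supplied by amenability --- is sound: invariance of $\mu$ is checked on cylinder sets because the action is diagonal and each factor measure is invariant, $(p_n)_*\mu=\mu_n$, and then $1=\mu(X_0)\le\mu\bigl(p_n^{-1}((X_n)_0)\bigr)=\mu_n\bigl((X_n)_0\bigr)$. What each approach buys: yours stays entirely within the measure-theoretic definition, handles both directions symmetrically, and needs only $\mathcal{M}_G(X_m)\neq\emptyset$; the paper's avoids any measure construction on the infinite product (no product-measure and pushforward-on-cylinders verification) by outsourcing the work to the already established equivalence between regularity and diam-mean proximality, which also makes the mechanism transparent --- fibres of $\pi_m$ are projections of fibres of $\pi$, and projections do not increase the mean diameter. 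Your side remark that the easy direction could also be run through $\Diam$ with the summable metric is likewise fine (for product fibres the diameter is additive across coordinates), though there you should normalize the coordinate metrics, say $d_m\le 1$, so that the tail of the series can be truncated uniformly.
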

\begin{proof}
We denote $X:= \prod_{n=1}^\infty X_n$ and $Y:= \prod_{n=1}^\infty Y_n$, and write $\pi^{(n)}$ for the projection $X\to X_n$. 

'$\Rightarrow$':      
    Let $m\in \mathbb{N}$. We assume w.l.o.g.\ that the metrics on $X_m$ and $X$ satisfy 
    $d(x_1,x_2)\geq d(\pi^{(m)}(x_1),\pi^{(m)}(x_2))$ for all $x_1,x_2\in X$. 
    This implies 
    $\Diam(\pi^{(m)}(A))\leq \Diam(A)$ for $A\in \hyper(X)$. 
    
    For $y\in Y_m$ we choose a sequence $(y_n)_n\in Y$ with $y_m=y$. Observe that  
    $\pi^{-1}((y_n)_n)=\prod_{n=1}^\infty \pi^{-1}_n(y_n)$ and hence
    $\pi^{(m)}(\pi^{-1}((y_n)_n))=\pi_m^{-1}(y)$. 
    Since $\pi$ is assumed to be regular, we know from Theorem \ref{the:regularFactorMapCharacterization} that 
    \begin{align*}
        \Diam(\pi_m^{-1}(y))
        =\Diam(\pi^{(m)}(\pi^{-1}((y_n)_n)))
        \leq \Diam(\pi^{-1}((y_n)_n))
        =0. 
    \end{align*}
    This shows $\Diam(\pi_m^{-1}(y))=0$ for all $y\in Y_m$ and we observe $\pi_m$ to be regular from another application of Theorem \ref{the:regularFactorMapCharacterization}.

'$\Leftarrow$':
    For $n\in \mathbb{N}$ we denote $X_{n,0}$ for the set of injectivity points of $\pi_n$.
    We observe that $X_0:=\prod_{n=1}^\infty X_{n,0}=\bigcap_{n=1}^\infty (\pi^{(n)})^{-1}(X_{n,0})$ is the set of injectivity points of $\pi$. 
    
    Let $\mu\in \mathcal{M}_G(X)$ and note that $\pi^{(n)}_*\mu\in \mathcal{M}_G(X_n)$ for $n\in \mathbb{N}$.  
    From the regularity of $\pi_n$ we obtain
    \[
    \mu\left(\left(\pi^{(n)}\right)^{-1}(X_{n,0})\right)=(\pi^{(n)}_*\mu)(X_{n,0})=1.
    \]
    Thus, 
    $
    \mu(X_0)
    =\mu\left(\bigcap_{n=1}^\infty \left(\pi^{(n)}\right)^{-1}(X_{n,0})\right)
    =1.$ 
\end{proof}

\subsection{(De-)composition stability}
    The following proposition shows that regularity is stable under composition and decomposition. 

\begin{proposition}
    \label{pro:regularityDeCompositionStability}
    Let $\phi\colon X\to Y$ and $\psi\colon Y\to Z$ be factor maps. Then $\pi:=\psi\circ \phi$ is regular if and only if $\phi$ and $\psi$ are regular. 
\end{proposition}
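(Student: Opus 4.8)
The plan is to reduce the whole biconditional to a single description of the injectivity points of $\pi$, and then to read off both implications from the measure-theoretic definition of regularity together with Remark~\ref{rem:regularityViaY}. Let $X_0$ denote the set of injectivity points of $\phi$ and $Y_0$ the set of injectivity points of $\psi$, and write $X_0^\pi$ for the injectivity points of $\pi$. The key step I would establish first is the identity
\[
X_0^\pi = X_0\cap \phi^{-1}(Y_0).
\]
The inclusion ``$\supseteq$'' is immediate: if $x\in X_0$ and $\phi(x)\in Y_0$, then $\psi^{-1}(\psi(\phi(x)))=\{\phi(x)\}$, so $\pi^{-1}(\pi(x))=\phi^{-1}(\psi^{-1}(\psi(\phi(x))))=\phi^{-1}(\phi(x))=\{x\}$. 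For ``$\subseteq$'', assume $\pi^{-1}(\pi(x))=\{x\}$. From $\phi^{-1}(\phi(x))\subseteq \pi^{-1}(\pi(x))$ we get $x\in X_0$; and if $y'\in \psi^{-1}(\psi(\phi(x)))$, then (using surjectivity of $\phi$) any $x'\in\phi^{-1}(y')$ satisfies $\psi(\phi(x'))=\psi(y')=\psi(\phi(x))$, so $x'\in\pi^{-1}(\pi(x))=\{x\}$ and hence $y'=\phi(x)$, giving $\phi(x)\in Y_0$. Since $X_0$ and $Y_0$ are Borel and $\phi$ is continuous, $X_0\cap\phi^{-1}(Y_0)$ is Borel.

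For the direction ``$\Leftarrow$'', assume $\phi$ and $\psi$ are regular and let $\mu\in\mathcal{M}_G(X)$. Regularity of $\phi$ gives $\mu(X_0)=1$, while $\phi_*\mu\in\mathcal{M}_G(Y)$ together with regularity of $\psi$ gives $(\phi_*\mu)(Y_0)=\mu(\phi^{-1}(Y_0))=1$. Intersecting the two full-measure sets yields $\mu(X_0^\pi)=\mu(X_0\cap\phi^{-1}(Y_0))=1$, so $\pi$ is regular.

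For ``$\Rightarrow$'', assume $\pi$ is regular. For any $\mu\in\mathcal{M}_G(X)$ we have $1=\mu(X_0^\pi)=\mu(X_0\cap\phi^{-1}(Y_0))\le\mu(X_0)$, so $\phi$ is regular. To obtain regularity of $\psi$ I would invoke Remark~\ref{rem:regularityViaY} and check $\nu(Y_0)=1$ only for ergodic $\nu\in\mathcal{M}_G(Y)$: by amenability (see the discussion of measure lifting in Section~\ref{sec:prelims}) such a $\nu$ lifts to some $\mu\in\mathcal{M}_G(X)$ with $\phi_*\mu=\nu$, and then $\nu(Y_0)=\mu(\phi^{-1}(Y_0))\ge\mu(X_0\cap\phi^{-1}(Y_0))=1$.

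I expect the combinatorial identity for $X_0^\pi$ to be the only real content; once it is in place, both implications are routine bookkeeping with invariant measures, and no use of the mean diameter or of Theorem~\ref{the:regularFactorMapCharacterization} is needed. The one subtlety to watch is that regularity of $\psi$ \emph{cannot} be read off from $\mu(\phi^{-1}(Y_0))=1$ for a single $\mu$: it must hold against every invariant measure on $Y$, which is precisely why the lifting of ergodic measures (available by amenability) is required in the ``$\Rightarrow$'' direction.
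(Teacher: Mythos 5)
Your proof is correct, but your ``$\Rightarrow$'' direction takes a genuinely different route from the paper's. The ``$\Leftarrow$'' direction essentially coincides: the paper uses exactly your intersection argument, though it only needs the inclusion $\phi^{-1}(Y_\psi)\cap X_\phi\subseteq X_\pi$, whereas you upgrade this to the full identity $X_0^\pi=X_0\cap\phi^{-1}(Y_0)$ (your verification of both inclusions, including the use of surjectivity of $\phi$ for ``$\subseteq$'', is sound). For ``$\Rightarrow$'', the paper never touches injectivity points: it invokes Theorem \ref{the:regularFactorMapCharacterization} to translate regularity into diam-mean proximality, normalizes the metrics so that $\phi$ is $1$-Lipschitz, and then reads off $\Diam(\phi^{-1}(y))\leq \Diam(\pi^{-1}(\psi(y)))=0$ from $\phi^{-1}(y)\subseteq \pi^{-1}(\psi(y))$, and $\Diam(\psi^{-1}(z))=\Diam(\phi(\pi^{-1}(z)))\leq \Diam(\pi^{-1}(z))=0$ from $\psi^{-1}(z)=\phi(\pi^{-1}(z))$. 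You instead stay purely measure-theoretic: regularity of $\phi$ falls out of $X_0^\pi\subseteq X_0$, and regularity of $\psi$ from the reduction to ergodic measures together with the lifting of an ergodic $\nu\in\mathcal{M}_G(Y)$ to $\mu\in\mathcal{M}_G(X)$ with $\phi_*\mu=\nu$, which the preliminaries provide via amenability; your closing caveat about needing \emph{every} invariant measure on $Y$ is exactly right and is handled correctly by this lifting. One small citation slip: the reduction to ergodic $\nu$ is the ergodic-representation statement in the paragraph \emph{preceding} Remark \ref{rem:regularityViaY} (the remark itself concerns the set $\pi(X_0)$, not the injectivity points of $\psi$). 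As for what each approach buys: yours is more elementary and self-contained --- no metric normalization and no mean-diameter machinery, and the exact fiber identity for $X_0^\pi$ is a reusable fact in its own right --- while the paper's argument is a two-line computation once Theorem \ref{the:regularFactorMapCharacterization} is in place and showcases the intended use of the regularity--diam-mean-proximality equivalence that the paper is built around.
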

\begin{proof}
'$\Rightarrow$':
    We assume w.l.o.g.\ that the metrics on $X$ and $Y$ satisfy 
    $d(x_1,x_2)\geq d(\phi(x_1),\phi(x_2))$ for all $x_1,x_2\in X$. 
    For $y\in Y$ and $z\in Z$ we observe from Theorem \ref{the:regularFactorMapCharacterization} that we have 
    $\Diam(\phi^{-1}(y))
    \leq \Diam(\pi^{-1}(\psi(y)))=0$
    and 
    $\Diam(\psi^{-1}(z))
    =\Diam(\phi(\pi^{-1}(z)))
    \leq \Diam(\pi^{-1}(z))=0$. 
    Thus, Theorem \ref{the:regularFactorMapCharacterization} yields that $\phi$ and $\psi$ are regular. 
    
    '$\Leftarrow$': Denote $X_\pi$, $X_\phi$ and $Y_\psi$ for the injectivity points of $\pi$, $\phi$ and $\psi$, respectively. 
    It is a straightforward exercise to verify that 
    $\phi^{-1}(Y_\psi)\cap X_\phi\subseteq X_\pi$. 
    For $\mu\in \mathcal{M}_G(X)$ we have $\phi_*\mu\in \mathcal{M}_G(Y)$ and from the regularity of $\psi$ we observe $\mu(\phi^{-1}(Y_\psi))=\phi_*\mu(Y_\psi)=1$.
    Furthermore, the regularity of $\phi$ yields $\mu(X_\phi)=1$ and we thus have
    $1= \mu(\phi^{-1}(Y_\psi)\cap X_\phi)\leq \mu(X_\pi)\leq 1$, i.e.\ $\mu(X_\pi)=1$. 
\end{proof}

\subsection{$\hyper$-(in)stability}

    Recall that a factor map 
    $\pi \colon X\to Y$
    is called \emph{almost one-to-one} if the set of injectivity points is dense in $X$. 
    In \cite{cortez2026note} it was presented that the properties discussed in the Propositions \ref{pro:regularityProductStability} and \ref{pro:regularityDeCompositionStability} also hold for almost one-to-one factor maps between actions (on compact metric spaces). 
    
    For a factor map $\pi\colon X\to Y$ the image mapping $\hyper(X)\ni A\mapsto \pi(A)$ establishes a factor map $\hyper(\pi)\colon \hyper(X)\to \hyper(Y)$ \cite{cortez2026note}. 
    It was also shown in \cite{cortez2026note, dai2024characterizations} that a factor map $\pi$ between actions (on compact metric spaces) is almost one-to-one if and only if $\hyper(\pi)$ is almost one-to-one. 
    We next present that a similar statement does not hold in the context of regularity. 

\begin{example}
    Let $Y:=\mathbb{Z}\cup \{\infty\}$ be the one-point compactification of $\mathbb{Z}$. 
    We act on $Y$ with $G:=\mathbb{Z}$ by fixing $\infty$ and by defining $g.y:=g+y$ for $g\in G$ and $y\in \mathbb{Z}$. 
    Consider two further disjoint copies of 
    $\hat{X}=\{\hat{n};\, n\in \mathbb{Z}\}\cup \{\hat{\infty}\}$ and $\check{X}=\{\check{n};\, n\in \mathbb{Z}\}\cup \{\check{\infty}\}$ of $Y$ 
    and denote $X':=\hat{X}\cup \check{X}$. 
    We obtain $X$ as a quotient of $X'$ after the identification $\infty_{\hat{X}}=\infty_{\check{X}}$ and denote $\infty$ for the respective element. 
    Note that $X=\{\hat{n};\, n\in \mathbb{Z}\}\cup \{\check{n};\, n\in \mathbb{Z}\}\cup \{{\infty}\}$ is uniquely ergodic and that the invariant Borel probability measure is $\delta_{\infty}$. 
    Thus, the factor map $\pi\colon X\to Y$ given by $\infty\to \infty$ and $\pi(\hat{n}):=\pi(\check{n}):=n$ for $n\in \mathbb{Z}$ is regular. 

    Consider the factor map $\hyper(\pi)\colon \hyper(X)\to \hyper(Y)$, as well as the elements $A:=\{\hat{n};\, n\in \mathbb{Z}\}\cup \{{\infty}\}$ and $B:=\{\check{n};\, n\in \mathbb{Z}\}\cup \{{\infty}\}$ of $\hyper(X)$. 
    Clearly we have $\hyper(\pi)(A)=Y=\hyper(\pi)(B)$ and hence 
    $\{A,B\}\subseteq \hyper(\pi)^{-1}(Y)$. 
    Nevertheless, the elements $A$ and $B$ are fixed in $\hyper(X)$.
    We thus observe $0<d_\hyper(A,B)\leq \Diam(\hyper(\pi)^{-1}(Y))$. 
    From Theorem \ref{the:regularFactorMapCharacterization} we thus conclude that $\hyper(\pi)$ is not regular. 
\end{example}

    Let $\pi\colon X\to Y$ be a factor map and denote $X_0$ and $\hyper(X)_0$ for the sets of injectivity points of $\pi$ and $\hyper(\pi)$, respectively. 
    One readily verifies that
    $
    \single(X_0)=\single(X)\cap \hyper(X)_0.
    $
    From this, the following statement follows directly.
    
\begin{proposition}
    \label{pro:regularityHyperDown}
    Let $\pi\colon X\to Y$ be a factor map.
    Whenever $\hyper(\pi)$ is regular, then $\pi$ is regular. 
\end{proposition}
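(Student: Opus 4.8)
The plan is to reduce the regularity of $\pi$ to that of $\hyper(\pi)$ by transporting invariant measures between $X$ and the subaction $\single(X)\subseteq \hyper(X)$, using the pointset identity $\single(X_0)=\single(X)\cap \hyper(X)_0$ recorded just above the statement. Recall from the excerpt that $X$ is conjugate to $\single(X)$ via $\iota\colon X\to \single(X)$, $x\mapsto \{x\}$, and that $\single(X)$ is a closed invariant subset of $\hyper(X)$.

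First I would fix an arbitrary $\mu\in \mathcal{M}_G(X)$, with the goal of showing $\mu(X_0)=1$. Since $\iota$ intertwines the two actions and $\single(X)$ is invariant, pushing $\mu$ forward along $\iota$ and then along the (continuous) inclusion $\single(X)\hookrightarrow \hyper(X)$ produces an invariant measure $\tilde{\mu}\in \mathcal{M}_G(\hyper(X))$ supported on $\single(X)$, so that $\tilde{\mu}(\single(X))=1$. The invariance and measurability here are routine: $\tilde{\mu}$ is simply $\iota_*\mu$ regarded as a measure on $\hyper(X)$, and $\iota_*\mu\in \mathcal{M}_G(\single(X))$ because $\iota$ is a conjugacy.

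Next I would invoke the hypothesis. As $\hyper(\pi)$ is regular and $\tilde{\mu}\in \mathcal{M}_G(\hyper(X))$, its set of injectivity points satisfies $\tilde{\mu}(\hyper(X)_0)=1$. Combining this with $\tilde{\mu}(\single(X))=1$ yields $\tilde{\mu}(\single(X)\cap \hyper(X)_0)=1$, and the identity $\single(X)\cap \hyper(X)_0=\single(X_0)=\iota(X_0)$ then gives $\tilde{\mu}(\iota(X_0))=1$. Transporting back through the conjugacy, i.e.\ using $\tilde{\mu}=\iota_*\mu$ on $\single(X)$, this reads exactly $\mu(X_0)=1$. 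Since $\mu\in \mathcal{M}_G(X)$ was arbitrary, $\pi$ is regular.

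The whole argument is essentially bookkeeping with the conjugacy $\iota$, so I do not expect a genuine obstacle; the only steps needing a little care are the invariance and support of $\tilde{\mu}$ and the pointset identity $\single(X_0)=\single(X)\cap \hyper(X)_0$, both of which are already secured in the excerpt.
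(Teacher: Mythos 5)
Your proof is correct and is precisely the ``straightforward'' argument the paper intends: it fills in the measure-transport bookkeeping behind the identity $\single(X_0)=\single(X)\cap \hyper(X)_0$, which the paper records immediately before the proposition and from which it deduces the statement. The only details worth noting --- measurability of the sets involved (both injectivity sets are $G_\delta$, and $\single(X)$ is closed in $\hyper(X)$) and invariance of $\tilde{\mu}$ --- are exactly the ones you flag, and they hold as you claim.
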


\begin{remark}
    For an action $X$ (of $G$) the mapping $(g,\mu)\mapsto g_*\mu$ induces an action of $G$ on $\mathcal{M}(X)$.
    Furthermore, for factor map $\pi\colon X\to Y$ the pushforward $\pi_*\colon \mathcal{M}(X)\to \mathcal{M}(Y)$ is a factor map. 
    It was shown in \cite{cortez2026note} that a factor map $\pi$ (between compact metric spaces) is almost one-to-one if and only if $\pi_*$ is almost one-to-one. 
        
    Denote $X_0$ and $\mathcal{M}(X)_0$ for the respective sets of injectivity points of $\pi$ and $\pi_*$. 
    Abbreviating $\delta(X'):=\{\delta_x;\, x\in X'\}$ for $X'\subseteq X$, 
    a straightforward argument yields $\delta(X_0)=\delta(X)\cap \mathcal{M}(X)_0$.
    With a similar argument as above it follows that the regularity of $\pi_*$ implies the regularity of $\pi$. It remains open whether the regularity of $\pi$ is equivalent to the regularity of $\pi_*$. 
\end{remark}

\section{The maximal diam-mean equicontinuous factor}
\label{sec:maximalDiamMeanEquicontinuousFactor}

Next, we establish the existence of a maximal diam-mean equicontinuous factor. 
We begin with the following observation. 

\begin{remark}
    Any subaction of a diam-mean equicontinuous action is diam-mean equicontinuous. 
    Furthermore, the canonical action (of $G$) on a singleton is diam-mean equicontinuous. 
\end{remark}

Diam-mean equicontinuity is stable under (countable) products. 

\begin{theorem}
    Let $(X_n)_{n\in \mathbb{N}}$ be a countable family of diam-mean equicontinuous actions. Then also the induced action on $X:=\prod_n X_n$ is diam-mean equicontinuous. 
    Furthermore, if $Y_n$ is the maximal equicontinuous factor of $X_n$, then $Y:=\prod_n Y_n$ is the maximal equicontinuous factor of $X$. 
\end{theorem}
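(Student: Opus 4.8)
The plan is to reduce everything to three earlier results: the characterization of diam-mean equicontinuity via regularity (Theorem \ref{the:INTROcharacterizationDMEvsRegular}), the product stability of regular factor maps (Proposition \ref{pro:regularityProductStability}), and the lifting statement Theorem \ref{the:liftingDiamMeanEquicontinuityFromEquicontinuousAction}(ii). First, for each $n$ let $\pi_n\colon X_n\to Y_n$ be the factor map onto the maximal equicontinuous factor. Since $X_n$ is diam-mean equicontinuous, Theorem \ref{the:INTROcharacterizationDMEvsRegular} tells us that each $\pi_n$ is regular. Forming the product factor map $\pi:=\prod_n \pi_n\colon X\to Y$, Proposition \ref{pro:regularityProductStability} then yields that $\pi$ is regular.

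The one step not immediately delivered by a cited result is that the product $Y=\prod_n Y_n$ is again equicontinuous, so I would verify this directly. For each $n$ choose a continuous $G$-invariant metric $d_n$ on $Y_n$, and replace it by $\min(d_n,1)$, which is still a continuous invariant metric inducing the same topology on $Y_n$. Then set $d(y,y'):=\sum_{n} 2^{-n} d_n(y_n,y_n')$. This series defines a metric that induces the product topology on the countable product of compact metric spaces $Y$, and it is $G$-invariant because $d(g.y,g.y')=\sum_n 2^{-n} d_n(g.y_n,g.y_n')=\sum_n 2^{-n} d_n(y_n,y_n')=d(y,y')$ using the invariance of each $d_n$. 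Hence $Y$ carries a continuous invariant metric and is equicontinuous.

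Finally, I would apply Theorem \ref{the:liftingDiamMeanEquicontinuityFromEquicontinuousAction}(ii) to the regular factor map $\pi\colon X\to Y$ whose target $Y$ is equicontinuous. This simultaneously establishes both assertions of the theorem: it gives that $X$ is diam-mean equicontinuous and that $\pi$ is the factor map onto the maximal equicontinuous factor of $X$, i.e.\ that $Y=\prod_n Y_n$ is the maximal equicontinuous factor of $X$. I do not expect a genuine obstacle here; the only routine verification is the equicontinuity of the countable product $Y$, which follows from the standard weighted-sum construction of an invariant metric together with the fact that such a sum induces the product topology on a countable product of compact metric spaces.
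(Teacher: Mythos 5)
Your proof is correct and follows essentially the same route as the paper: regularity of each $\pi_n$ (via Proposition \ref{pro:diamMeanEquicontinuousActionhasRegularMEF}, which is the direction of Theorem \ref{the:INTROcharacterizationDMEvsRegular} you invoke), product stability of regularity (Proposition \ref{pro:regularityProductStability}), and then Theorem \ref{the:liftingDiamMeanEquicontinuityFromEquicontinuousAction}(ii). The only difference is cosmetic: where the paper cites \cite[Chapter 2]{auslander1988minimal} for the equicontinuity of $Y=\prod_n Y_n$, you verify it directly with the standard weighted-sum invariant metric $\sum_n 2^{-n}\min(d_n,1)$, which is a correct and self-contained substitute.
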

\begin{proof}
    For $n\in \mathbb{N}$ denote $\pi_n\colon X_n\to Y_n$ for the respective factor map onto the maximal equicontinuous factor of $X_n$. 
    Note that $Y$ is equicontinuous \cite[Chapter 2]{auslander1988minimal}. 
    From Proposition \ref{pro:diamMeanEquicontinuousActionhasRegularMEF} we observe that all $\pi_n$ are regular. 
    Thus, Proposition \ref{pro:regularityProductStability} yields that the induced factor map $X\to Y$ is also regular and we have established that $X$ is a regular extension of an equicontinuous action. The statement now follows from Theorem \ref{the:liftingDiamMeanEquicontinuityFromEquicontinuousAction}. 
\end{proof}

It is well known that a property (P), which holds for the canonical singleton action and which is stable under products and subactions allows for the construction of a 'maximal factor of $X$ that satisfies (P)' for any action $X$. 
For this, it is often referred to \cite{auslander1988minimal}, where this technique is presented in the context of actions on compact Hausdorff spaces and product-stability means stability of (P) under arbitrary (possibly uncountable) products. 
However, working with compact metric spaces one can only consider countable products and the question arises, whether the technique of \cite{auslander1988minimal} can also be used while working only with actions on compact metric spaces. 
We will present in the proposition below that it can be used and provide a full proof for the convenience of the reader. This allows to establish Theorem \ref{the:INTROmaximalDiamMeanEquicontinuousFactor}. 
For a precise terminology, we call two factor maps $\pi\colon X\to Y$, $\pi'\colon X\to Y'$ \emph{conjugated} if there exists a conjugacy $\iota\colon Y\to Y'$ with $\iota\circ \pi=\pi'$. 

\begin{proposition}
\label{pro:auslanderRefined}
    Let $G$ be a topological group. 
    Consider a property (P) that is defined for actions of $G$ on compact metric spaces such that
    \begin{itemize}
        \item[(i)] Whenever an action satisfies (P), then any subaction satisfies (P), 
        \item[(ii)] Whenever $(X_n)_{n\in \mathbb{N}}$ is a countable family of actions that satisfy (P), then $\prod_n X_n$ satisfies (P), and
        \item[(iii)] The canonical singleton action of $G$ satisfies (P). 
    \end{itemize}
    If $X$ is an action of $G$ (on a compact metric space), then there exists a maximal factor of $X$ that satisfies (P), i.e.\ a factor map $\pi_{P}\colon X\to X_P$ that satisfies (P), such that for all factors $\pi\colon X\to Y$ that satisfy (P) there exists a factor map $\psi\colon X_P\to Y$ with $\pi=\psi\circ \pi_{P}$. 
    The maximal factor of $X$ that satisfies (P) is unique up to conjugacy. 
\end{proposition}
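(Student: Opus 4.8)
The plan is to reproduce the classical Auslander construction of a maximal factor with a prescribed property, taking care to perform the passage to a product over a \emph{countable} index set — this being the only place where the compact metric setting, as opposed to the compact Hausdorff setting, demands an extra argument. The construction rests on the standard correspondence between factors of $X$ and closed $G$-invariant equivalence relations on $X$: to a factor map $\pi\colon X\to Y$ we associate $R_\pi:=\{(x,x')\in X\times X;\, \pi(x)=\pi(x')\}$, which is closed and invariant; conversely, for any closed invariant equivalence relation $R$ the quotient $X/R$ is a compact metric space carrying an induced action, with the quotient map a factor map. Metrizability of $X/R$ follows since $X/R$ is compact Hausdorff and $C(X/R)$ is isometric to the space of $R$-invariant functions in $C(X)$, which is a subspace of the separable space $C(X)$ and hence separable. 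Under this correspondence, $R_{\pi}\subseteq R_{\pi'}$ holds if and only if $\pi'$ factors through $\pi$.

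With this in hand I would let $\mathcal{R}$ denote the family of all closed invariant equivalence relations $R$ on $X$ for which $X/R$ satisfies (P). By (iii) the full relation $X\times X$, whose quotient is the singleton action, lies in $\mathcal{R}$, so $\mathcal{R}\neq\emptyset$. Set $R_P:=\bigcap_{R\in\mathcal{R}}R$, again a closed invariant equivalence relation, and let $\pi_P\colon X\to X_P:=X/R_P$ be the associated factor map; this is the candidate for the maximal (P)-factor.

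The key step, and the main obstacle, is to show that $X_P$ satisfies (P). Arbitrary intersections are not directly available, since (P) is only assumed stable under countable products, so I would reduce the (possibly uncountable) intersection to a countable one. Since $X$ is compact metric, $X\times X$ is second countable and hence Lindelöf, so the open set $(X\times X)\setminus R_P=\bigcup_{R\in\mathcal{R}}((X\times X)\setminus R)$ admits a countable subcover; that is, there is a sequence $(R_n)_n$ in $\mathcal{R}$ with $\bigcap_n R_n=R_P$. Consider the $G$-equivariant diagonal map $\Phi\colon X\to\prod_n X/R_n$, $x\mapsto(\pi_{R_n}(x))_n$, where $\pi_{R_n}\colon X\to X/R_n$ are the quotient maps. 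Its image $\Phi(X)$ is a subaction of the countable product $\prod_n X/R_n$, which satisfies (P) by (ii), so $\Phi(X)$ satisfies (P) by (i). Finally $\Phi(x)=\Phi(x')$ holds precisely when $(x,x')\in\bigcap_n R_n=R_P$, so the continuous $G$-equivariant bijection $X_P=X/R_P\to\Phi(X)$ induced by $\Phi$ is a conjugacy (a continuous bijection from a compact space to a Hausdorff space); hence $X_P$ satisfies (P).

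It then remains to verify maximality and uniqueness, both routine. For maximality, any factor $\pi\colon X\to Y$ satisfying (P) yields $R_\pi\in\mathcal{R}$, whence $R_{\pi_P}=R_P\subseteq R_\pi$, so by the correspondence $\pi$ factors through $\pi_P$, i.e.\ there is a factor map $\psi\colon X_P\to Y$ with $\pi=\psi\circ\pi_P$. For uniqueness up to conjugacy, if $\pi_P'\colon X\to X_P'$ is another maximal (P)-factor, then by maximality each of $\pi_P,\pi_P'$ factors through the other, say $\pi_P'=\psi\circ\pi_P$ and $\pi_P=\psi'\circ\pi_P'$; then $\psi'\circ\psi\circ\pi_P=\pi_P$, and surjectivity of $\pi_P$ forces $\psi'\circ\psi=\mathrm{id}_{X_P}$, with $\psi\circ\psi'=\mathrm{id}_{X_P'}$ symmetrically, so $\psi$ is the desired conjugacy.
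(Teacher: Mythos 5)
Your proof is correct, and while it follows the same classical Auslander skeleton as the paper (a diagonal map into a product of (P)-factors, maximality via the factor correspondence, routine uniqueness), the crucial countability reduction --- the only point where the compact metric setting requires work beyond \cite{auslander1988minimal} --- is handled by a genuinely different mechanism. The paper forms $X_P$ as the image of the diagonal map into the possibly uncountable product $\prod_{i\in I}Y_i$ over conjugacy representatives of all (P)-factors, then uses metrizability of $X_P$ to extract a countable base of basic product-open sets, hence a countable coordinate set $N$ such that the projection onto $\prod_{i\in N}Y_i$ is injective on $X_P$; this embeds $X_P$ into a countable product of (P)-actions. You instead work with closed invariant equivalence relations: $R_P$ is the intersection of the family $\mathcal{R}$, and since $X\times X$ is second countable, the open set $(X\times X)\setminus R_P$ is Lindel\"of, yielding countably many $R_n\in\mathcal{R}$ with $\bigcap_n R_n=R_P$, after which the diagonal into $\prod_n X/R_n$ does the rest. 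Your route buys a cleaner countability argument (Lindel\"of in $X\times X$ rather than a base analysis in the product topology) and dissolves the paper's set-theoretic side remark, since $\mathcal{R}\subseteq 2^{X\times X}$ is manifestly a set; the price is that you lean on the correspondence between factors and closed invariant equivalence relations --- in particular that $X/R$ is compact metrizable (your $C(X/R)$ separability argument is fine) and carries a \emph{continuous} induced $G$-action for an arbitrary topological group $G$, which requires noting that the quotient map $q$ is proper, hence $\mathrm{id}_G\times q$ is again a quotient map; the paper sidesteps all of this by only ever taking images of equivariant maps. Note also that your final conjugacy $X/R_P\to\Phi(X)$ shows you could have defined $X_P:=\Phi(X)$ directly and avoided the quotient construction for $R_P$ altogether, which would bring your argument even closer to the paper's image-based formulation.
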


\begin{remark}
    Consider properties (E) and (D) of actions such that (E) is stronger than (D), i.e.\ any action that satisfies (E) also satisfies (D). 
    Whenever (E) and (D) satisfy the properties (i), (ii) and (iii), then for an action $X$ we can consider the maximal factor $\pi_D\colon X\to X_D$ that satisfies (D). 
    It is easy to verify that the maximal factors $\pi\colon X\to X_E$ and $\phi\colon X_D\to X_E'$ that satisfy (E) are conjugate, i.e.\ there exists a conjugacy $\iota\colon X_E'\to X_E$ such that 
    $\pi=\iota\circ \phi\circ \pi_D$. 
    In particular, we observe that the maximal equicontinuous factor of an action $X$ is the maximal equicontinuous factor of the maximal diam-mean equicontinuous factor of $X$. 
\end{remark}

\begin{remark}
    The maximal equicontinuous factor and the maximal diam-mean equicontinuous factor differ in general. For example any regular Toeplitz subshift of $\{0,1\}^\mathbb{Z}$ is a (non-trivial) regular extension of an odometer and hence diam-mean equicontinuous. Here the odometer is the maximal equicontinuous factor. See \cite{downarowicz2005survey, baake2016toeplitz, cortez2014invariant} for details and further reading. 
\end{remark}

Before presenting the proof, we briefly discuss some obstacles that arise in this context.

\begin{example}
    It is a well-known folklore result that the class of all compact metric spaces, up to homeomorphism, is uncountable.
    Let $X$ be the Cantor space and $G$ be any group. We act on $X$ by fixing all elements. Since all compact metric spaces are continuous images of $X$ \cite[Theorem 30.7]{stephen2004general} we observe that the family of all factors (up to conjugacy) is uncountable.
\end{example}

\begin{proof}[Proof of Proposition \ref{pro:auslanderRefined}:]
    Note that the concepts of factor map, conjugacy, invariant subset and product given in Section \ref{sec:prelims} can easily be extended to actions of $G$ on a compact Hausdorff space. 
    In particular, this allows to consider arbitrary products of actions. See \cite{auslander1988minimal} for further reference in this direction. 

    Let $X$ be an action of $G$ on a compact metric space. 
    For simplicity, a factor of $X$ is called a \emph{P-factor} if it satisfies (P). 
    For any factor $\pi$ of $X$ we denote $R(\pi):=\{x,x'\in X;\, \pi(x)=\pi(x')\}$. 
    Note that two factors $\pi$ and $\pi'$ of $X$ are conjugated if and only if $R(\pi)=R(\pi')$. 
    It follows that a family $(\pi_i\colon X\to Y_i)_{i\in I}$ of conjugacy-representatives of P-factors of $X$ can be treated as a set. From (iii) we know that $I$ is non-empty.    
    Consider the product action $Y_I:=\prod_{i\in I} Y_i$ and the map 
    $\pi_I\colon X\to Y_I$ 
    given by 
    $\pi_I(x):=(\pi_i(x))_i$. 
    Clearly, $\pi_I$ restricts to a factor map 
    $\pi_{P}\colon X\to X_P:=\pi_I(X)$. 
    It is straightforward to verify that for any P-factor $\pi\colon X\to Y$ 
    there exists a factor 
    $\psi\colon X_P\to Y$ 
    with 
    $\pi=\psi\circ \pi_{P}$. 
    Furthermore, from $R(\pi_{P})=\bigcap_i R(\pi_i)$ it is straightforward to observe that $X_P$ is conjugated to any other candidate for a maximal P-factor. 
    This establishes the uniqueness of the maximal P-factor of $X$ up to conjugacy.

    It remains to show that $X_P$ satisfies (P). This does not immediately follow from (ii), since $I$ is not necessarily countable.     
    However, since $X$ is metrizable we observe that $X_P$ is metrizable. Thus, $X_P$ has a countable base $(B^{(n)})_{n\in \mathbb{N}}$ for its topology. Revising the definition of the product topology, we assume w.l.o.g.\ that $B^{(n)}$ is of the form 
    $B^{(n)}=X_P\cap (\prod_i U_i^{(n)})$ 
    with $U_i^{(n)}\subseteq Y_i$ open and non-empty 
    and $U_i^{(n)}=Y_i$ for all but finitely many $i$. 
    
    Let $N$ be the set of all $i\in I$, for which there exists some $n\in \mathbb{N}$ such that $U_i^{(n)}\neq Y_i$ and note that $N$ is countable.
    Denote $Y_N:=\prod_{i\in N} Y_i$ and consider the projection 
    $\pi_{I,N}\colon Y_I\to Y_N$. 
    Let $\iota$ be the restriction $\iota\colon X_P\to \iota(X_{P})$ of $\pi_{I,N}$
    and observe that $\iota$ is a factor map. 
    Our choice of $N$ implies that the topology of $X_P$ is given by $\{\pi_{I,N}^{-1}(O);\, O\in \tau\}$, where $\tau$ is the topology of $\iota(X_{P})$. 
    
    Since $\iota(X_{P})$ is Hausdorff, we observe $\iota$ to be injective and hence a conjugacy.     
    Furthermore, from (ii) we know that $Y_N$ satisfies (P). 
    Thus, $\iota(X_{P})\subseteq Y_N$ yields that also $\iota(X_{P})$ and hence $X_P$ satisfy (P).  
\end{proof}

\bibliographystyle{alpha}
\bibliography{ref}

\end{document}